\title[Catalan numbers: from FC elements to classical diagram algebras]{Catalan numbers: from FC elements to classical diagram algebras}
\author{Sadek AL HARBAT}
\address{} 
\email{sadekharbat@inst-mat.utalca.cl}
\let\mathbb\mathds
\newtheorem{theorem}{Theorem}[section]
\newtheorem{definition}[theorem]{Definition}
\newtheorem{proposition}[theorem]{Proposition}
\newtheorem{lemma}[theorem]{Lemma}
\newtheorem{corollary}[theorem]{Corollary}
\newtheorem{example}[theorem]{Example}
\newtheorem{remark}[theorem]{Remark}
\newtheorem{remarks}[theorem]{Remarks}
\newenvironment{demo}{\begin{proof}}{\end{proof}}
 \def\C{\mathcal C}
    \newlength{\myarrowsize} 
    \newlength{\myoldlinewidth}
\tikzstyle{vecArrow} = [thick, decoration={markings,mark=at position
\tikzstyle{innerWhite} = [semithick, white,line width=1.4pt, shorten >= 4.5pt]
\newcommand\dyckpath[3]{

	\draw[help lines] (#1) grid +(#2,#2);
	\draw[dashed] (#1) -- +(#2,#2);
	\coordinate (prev) at (#1);
	\foreach \dir in {#3}{
		\ifnum\dir=0
		\coordinate (dep) at (1,0);
		\else
		\coordinate (dep) at (0,1);
		\fi
		\draw[line width=2pt,] (prev) -- ++(dep) coordinate (prev);
	};
}
\newcommand\dyckpathballot[3]{\small 

	\draw[help lines] (#1) grid +(#2,#2);
	\draw[line width=0.2pt,dashed] (#1) -- +(#2,#2);
	\coordinate (prev) at (#1);
	\foreach \dir in {#3}{
		\ifnum\dir=0
		\coordinate (dep) at (1,0);
           \draw[line width=2pt,] (prev) -- ++(dep)  node at ++(-0.6,0.3) {\tiny +} coordinate (prev);
		\else
		\coordinate (dep) at (0,1);
    \draw[line width=2pt,] (prev) -- ++(dep)  node at ++(-0.3,-0.4){-} coordinate (prev);
		\fi
	};
}
	\newcommand\POSITION[3]{%
	\begingroup
	\@tempdim@x=0cm
	\@tempdim@y=\paperheight
	\advance\@tempdim@x#1
	\advance\@tempdim@y-#2
	\put(\LenToUnit{\@tempdim@x},\LenToUnit{\@tempdim@y}){#3}%
	\endgroup
	}
\begin{document}

	\begin{abstract}
	Let $W^c(A_n)$ be the set of fully commutative elements in the $A_n$-type Coxeter group. Using only the settings of their canonical form, we recount $W^c(A_n)$ by the recurrence that is taken as a definition of the Catalan number $C_{n+1}$ and we find the Narayana numbers as well as the Catalan triangle via suitable set partitions of $W^c(A_n)$. We determine the unique bijection between $W^c(A_n)$ and the set of non-crossing diagrams of $n+1$ strings that respects the  diagrammatic multiplication by concatenation in the $A_n$-type Temperley-Lieb algebra, along with the two algorithms implementing this bijection and its inverse. 
	
	\end{abstract}

 	\maketitle 
	
	\keywords{Catalan number; Coxeter group; Fully commutative element; Diagram algebra.}

\section{Introduction}

Following R. P. Stanley \cite[\S 1.2]{Sta15}  the recurrence and initial condition
\begin{equation}\label{veryfirst}
 C_{n+1} = \sum_{k=0}^{n}   C_{k}C_{n-k}, ~~~~ C_{0}=1,
\end{equation}
is "the most important and most transparent recurrence satisfied by" the Catalan numbers $1,1, 2, 5, 14 \dots$, we hence adopt it as a definition. In order to avoid (and how one could !) drowning in the History of those numbers  we refer the reader to \cite{Sta15}, of which we may (and should) borrow some terminology. Starting now: We refer to any interpretation by its number in the book in double brackets. 

One of the aims of this work is to add "explicitly" a 215th object counted by Catalan "by definition". Citing again \cite{Sta15}, "For other `Catalan objects', however, it can be quite difficult, if not almost impossible, to see directly why the recurrence (\ref{veryfirst}) holds." So saying that we count our object  (the set of fully commutative elements in the Coxeter group of type $A_n$) by definition is saying exactly that this defining recurrence holds in this object, via suitable set partitions. 

Another aim is to give some bijections with other objects, that are of specific importance for algebraists and topologists at the same time. 

We focus in this work on two Catalan phenomena (say counted by  $C_{n+1}$): 

\begin{itemize}
\item  Fully commutative elements in the Coxeter group of type $A_n$;

\item Non-crossing diagrams with $n+1$ strings ((61)).
\end{itemize}

And other two phenomena, in general better known and considered two of the five fundamental interpretations of Catalan numbers in  \cite{Sta15}, of which we explain roughly the relation with the two first: 

\begin{itemize}

\item Dyck paths of length $2(n+1)$ ((24));

\item Ballot sequences of length  $2(n+1)$ ((77)).

\end{itemize}

We name them by their shortcuts: $F, N, D$ and $B$ respectively, in order to shorten the notation of bijections among them, in particular the ones  already known. 

Let $(W,S)$  be a Coxeter system. We say that w in  W is {\it Fully Commutative} (otherwise {\it FC}) if any reduced expression for $ w$
can be obtained from any other using only commutation relations among the members of $S$.
If $W$ is simply laced then the FC elements of $W$ are those with no $sts$  factor in any 
reduced expression, where $t$ and $s$ are any non-commuting generators. This work is centered on  $W^c(A_n)$: The set of FC elements in $W(A_n)$, the Coxeter group of type $A_n$  (AKA $Sym_{n+1} $). The careful reader should not be confused that we deal with the $n+1$-th Catalan number $C_{n+1}$ rather than $C_n$ since we prefer to treat $W (A_n)$ (not $W (A_{n-1})$).  \\

Our starting point is a canonical reduced expression of an FC element, say $w$, in $W^c(A_n)$, this canonical expression is well known and explained in section  \ref{notations}, it consists essentially of well defining a canonical form for $w$ (hence well defining $w$) by $p$ pairs of positive integers  $(i_t , j_t)  $ for  $1\le t \le p $,   such that $ i_t    \le j_t $ and 
$$
\begin{aligned}
 ~~ ~~~~~&n\ge j_1 > \dots > j_p \ge 1 \text{, and  } \\
&n\ge i_1 > \dots > i_p \ge 1.
\end{aligned}
$$
We stress the fact that we look at  such a $w$ as a  \underline{$p$-pairs $(i_t , j_t)  $}, for although the set $W^c(A_n)$ coincides with the set of $321$-avoiding permutations in the symmetric group $Sym_{n+1}$   \cite{Sta15} ((126)) when looking at the latter as $W(A_n)$,  yet the permutations are different combinatorial objects than FC elements given by their Coxeter generators expressions, specifically as a $p$-pairs: the center of this paper.  \\

{\bf Bijections}: It is in  \cite{Jones_1983} that this canonical reduced expression was born, along with a bijection with Dyck paths by sending every pair to a peak (say by the bijection $FD$). Now paths are in bijection with ballots by sending every vertical step to $(+)$ and every horizontal step to $(-)$ (say by $DB$).  Those bijections and their inverses can be easily seen in the following example.  
\begin{example}\label{ex5}  (for $n=5$) $ (4,5) (3,3) (1,1)$ (the three-pairs defining $  \sigma_4 \sigma_5 \ \sigma_3 \ \sigma_1 $). By $FD$ the  corresponding Dyck path has 
three peaks with 
coordinates   $ (1,1) (3,3) (5,4)$ (bottom to top) and bottom to top is indeed the direction of the corresponding ballot  $+-++--++-+--$ (starting always by $+$):\\

\centerline{
			\begin{tikzpicture}[scale=0.5]
\dyckpathballot{0,0}{6}{0,1,0,0,1,1,0,0,1,0,1,1}; 
\end{tikzpicture}}										 
\end{example}
Dyck paths were known to be counted by Catalan numbers, so does any set in bijection with them, $W^c(A_n)$ in particular. In the first part of this work, we  count FC elements by definition, using only the settings of their canonical form without the help of any bijection.
This relies on set-partitions of FC elements, 
 in particular the notion of "thick" and "thin" elements, see Definition \ref{thin}. Hence counting $W^c(A_n)$ by definition puts FC elements in the class of "basic" interpretations of Catalan (the class being defined as phenomena counted by the definition of $C_{n+1}$ and not only by bijections). $C_{n+1}$ has two famous partitions: Narayana numbers $C^p_n$ and Catalan triangle numbers $^i S_n$.  Since we fix $n$, we choose to call each of them a one-parameter partition (the parameters being $p$,$i$ respectively). \\

{\bf Partitions}: Call $p$ the {\it size} of $w$, it is natural to ask: What is the number of FC elements of size $p$? And what is the number of FC elements such that  $i_1=i$ ? It turns out  (and again only by the FC settings and its set-partitions) that  the answers are $C^p_n$ and  $^i S_n$ in this order. We quote:  "It is interesting to find for each of the combinatorial interpretations of $C_{n+1}$ a corresponding decomposition into subsets counted by Narayana" (\cite{Sta15} p.125), and we may add subsets counted by Catalan triangle. We study the mixed: Narayana-triangle two-parameters partition, and other partitions mostly all known for other phenomena like paths and ballots, but which did not have much meaning in "Diagrams" (the center of the last chapter), at least not before the bijection that we build in Theorem \ref{FC D}. Explaining those partitions and their ramifications led to a wider view on  $W^c(A_n)$ itself! See for example the duality in 
Proposition \ref{duality}. \\

{\bf Non-crossing diagrams}: One of the most wanted object, chased at the same time in algebra as in low dimensional topology, knot theory in particular, is:  Non-crossing diagrams. These classic diagrams form a basis for the well known Temperley-Lieb algebra, which has another basis indexed by FC elements: $\{ e_w; w\in W^c(A_n) \} $. Let's call them FC monomials. The FC monomial  $e_w$ is independent of the reduced expression of $w$, let's pick up the canonical one (see Theorem \ref{1_2}), we have: 

$e_{w} = e_{i_1}e_{i_1+1} .. e_{j_1} \dots e_{i_p}e_{i_p+1} .. e_{j_p}. $

Now it is to be understood that algebraically $e_w$ corresponds to a unique diagram $D_w$ coming from the concatenation of $\ell(w)$ primitive diagrams: $ D_{i_1}$ on the top of $ D_{i_1+1} \dots $ on the top of $ D_{j_p}$. 
It is well known that ballot sequences of length $2(n+1)$  are in bijection with non-crossing diagrams with $n+1$ strings and with FC elements  in $W(A_n)$, so one can determine  a bijection $DF$ from Diagrams to FC elements  (hence FC monomials) directly by composing those bijections! Yet it is not "the bijection" which is compatible with the multiplication by concatenation of diagrams, see the counterexample in \ref{cex}. 
The main result of this work is the explicit description of the bijection between $W^c(A_n)$ and non-crossing diagrams with $n+1$ strings that is compatible with the multiplication of monomials (hence with diagram concatenation) i.e. that sends $w$ to $D_w$, this is Theorem \ref{FC D}. 

This bijection yields two algorithms: One  permits to "draw" into a diagram any FC monomial directly without any concatenation or multiplication after putting $w$ in its canonical expression, and the other receives any diagram as an input and gives directly the FC monomial (hence FC element) in its canonical form without intermediate  objects or computation. Concerning the first, we do not know of another existing algorithm, other than the one coming from concatenating diagrams. While concerning the second algorithm, as far as we know the fastest algorithm is  one that engages at least standard tableaux and walks as intermediate objects, see for details \cite{Steen}. 

We finish by proposing some applications and open questions on which the reader is warmly welcomed to think. We call attention to Proposition \ref{LwRw} that expresses the left and right descent sets of a FC element $w$ in terms of the corresponding diagram, in a very simple way. Some consequences on cellularity properties are to be expected.  \\

{\bf A bit farther}: One would like -as a next but not far aim-  to "fullcommutative" any interpretation of Catalan, and see the meaning of the set-partitions (hence partitions) that we propose, the author would like specifically to contaminate two Catalanomena:

\begin{itemize}

\item Triangulations of a convex polygon with $n+2$ vertices ((1));

\item Coxeter friezes with $n+2$ rows ((197)).

\end{itemize} 
 
 We finish this introduction by pointing out that this work is followed by a second one in which we treat the type $B$  FC elements, hence giving the suitable bijection between the "positive" $B$ FC elements and blobbed diagrams, and some generalization with symplectic blob algebra (see \cite{Sadek_David}  for the notion of positivity).

\section{ Canonical form and basic bijections}

\subsection{A canonical form for FC elements}\label{notations}

		 	Let $(W,S)$ be a Coxeter system with associated Coxeter graph $\Gamma$. For $s, t$ in $S$ we let $m_{st}$ be the order of $st$ in $W$. Let $w\in W$. We denote by $\ell(w)$ the length of a $w$ with respect to $S$. We call 
			{\it support of $w$} and  denote by $Supp(w)$ the subset of $S$ consisting of all generators appearing in a (any) reduced expression of $w$.   We define $\mathscr{L} (w) $ to be the set of $s\in S$ such that $\ell(sw)<\ell(w)$, in other words  $s$ appears at the left edge of some reduced expression of $w$. We define $\mathscr{R}(w)$ similarly.

				We know that from a given reduced expression of $w$ we can arrive to any other reduced expression only by applying braid relations \cite[\S 1.5 Proposition 5]{Bourbaki_1981}. Among these relations there are commutation relations: those 
				that  correspond  to   generators $t$ and $s$ with $m_{st} = 2$.

	             \begin{definition}
			Elements for which we can pass from any reduced expression to any other only by applying commutation relations are called {\rm fully commutative elements} (from now on  {\rm FC elements}). We denote  by $W^{c}$ the set of FC elements in $W$. 
		    \end{definition} 
					
			Consider the $A$-type Coxeter group with $n$ generators $W(A_{n})$, ($n$ being a positive integer and $W(A_{0})= 1$) with the following Coxeter-Dynkin graph:

			\begin{figure}[ht]
				\centering
				 
				\begin{tikzpicture}

  \filldraw (0,0) circle (2pt);
  \node at (0,-0.5) {$\sigma_{1}$}; 
   
  \draw (0,0) -- (1.5, 0);

  \filldraw (1.5,0) circle (2pt);
  \node at (1.5,-0.5) {$\sigma_{2}$};

  \draw (1.5,0) -- (3, 0);

  \node at (3.5,0) {$\dots$};

  \draw (4,0) -- (5.5, 0);
  
  \filldraw (5.5,0) circle (2pt);
  \node at (5.5,-0.5) {$\sigma_{n-1}$};
 
  \draw (5.5,0) -- (7, 0);
  
  \filldraw (7,0) circle (2pt);
  \node at (7,-0.5) {$\sigma_{n}$};

               \end{tikzpicture}
			 
			\end{figure}					
							
$$
\begin{aligned}
\text{ We let:   } ~~~~~~  [ i,j ] &= \sigma_i \sigma_{i+1} \dots \sigma_j   \   \text{ for }  1\le i\le j \le n.  
\\
\\
  \end{aligned}
$$ 
 \medskip
Considering right classes of $W(A_{n-1})$ in $W(A_{n}) $, Stembridge has described canonical reduced words for   elements of 
$W(A_n)$. We get,  setting $W^c(A_n)= A^c_n$:

\begin{theorem}\label{1_2}{\rm \cite[Corollary 5.8]{St}}
 Let $n$ be a positive integer, then   $A^c_n$ is the set of elements of the   form: 
 \begin{equation}\label{eq:Stembridge}
  [i_1, j_1]  [ i_2, j_2 ]  \dots   [i_p, j_p] , 
 \text{ with } 0\le p \le n  \text{ and } 
 \left\{ \begin{matrix}
 n\ge j_1 > \dots > j_p \ge 1 ,  \cr    n\ge i_1 > \dots > i_p \ge 1,   \cr
 j_t \ge i_t   \text{ for }  1\le t \le p.  
 \end{matrix}\right.
 \end{equation}
We call {\em size} of $w$ the integer $p$ ( with $0$ as the size of the identity). We call such pairs  {\em  standard $p$-pairs}. 
\end{theorem}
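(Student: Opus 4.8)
The plan is to argue by induction on $n$, using the decomposition of $W(A_n)$ over the parabolic subgroup $W(A_{n-1})=\langle\sigma_1,\dots,\sigma_{n-1}\rangle$. First I would record the standard fact that the distinguished (minimal length) representatives of the left cosets $w\,W(A_{n-1})$ are exactly the identity together with the segments $[i,n]=\sigma_i\sigma_{i+1}\cdots\sigma_n$ for $1\le i\le n$; there are $n+1$ of these, matching $|W(A_n)/W(A_{n-1})|=n+1$. Each is characterized by having no right descent in $\{\sigma_1,\dots,\sigma_{n-1}\}$. Consequently every $w\in W(A_n)$ factors uniquely as $w=c\,u$ with $c$ a distinguished representative and $u\in W(A_{n-1})$, and this factorization is length-additive, $\ell(w)=\ell(c)+\ell(u)$. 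Hence concatenating a reduced word for $c$ with one for $u$ yields a reduced word for $w$, and $\sigma_n\in Supp(w)$ if and only if $c=[i,n]$ for a unique $i$, equivalently $j_1=n$ in the sought form.

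For the necessity direction (every FC $w$ admits the stated form) I would run the induction as follows. Factor $w=c\,u$. If $c=1$ then $w=u\in W(A_{n-1})$ is FC there, and the inductive hypothesis yields a form with $j_1\le n-1$. If $c=[i_1,n]$, note first that $u$ is again FC: a braid inside a reduced word for $u$ would, after prepending the reduced word of $c$, sit inside a reduced word for $w$. The inductive hypothesis then writes $u=[i_2,j_2]\cdots[i_p,j_p]$ with $j_2\le n-1<n=j_1$ and with strictly decreasing rows and columns. It remains to force $i_1>i_2$, and this is the one genuinely computational point: if instead $i_1\le i_2$, then $[i_1,n]$ contains both $\sigma_{i_2}$ and $\sigma_{i_2+1}$ (here $i_2+1\le j_2+1\le n$), so the leading letter $\sigma_{i_2}$ of $u$ commutes leftward past $\sigma_n,\dots,\sigma_{i_2+2}$ until it is trapped immediately after $\sigma_{i_2+1}$, exposing the braid $\sigma_{i_2}\sigma_{i_2+1}\sigma_{i_2}$ inside a reduced word for $w$ and contradicting FC. Thus $i_1>i_2$, and since trivially $i_1\le n=j_1$, the element $w$ has exactly the claimed standard $p$-pairs form.

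For reducedness, uniqueness, and the converse I would proceed thus. Reducedness of the displayed word is the length-additivity of the coset factorization, applied inductively. Uniqueness of the $p$-pairs follows from uniqueness of the coset representative $c$ together with the inductive uniqueness for $u$. The converse, that any word $[i_1,j_1]\cdots[i_p,j_p]$ obeying the three inequalities represents an FC element, is where I expect the real work. I would establish it by invoking the classical identification, recalled in the excerpt, of $W^c(A_n)$ with the $321$-avoiding permutations: computing the one-line notation of the product (each $[i_t,j_t]$ acts as the window rotation sending $i_t\mapsto j_t+1$ and shifting $i_t+1,\dots,j_t+1$ down by one), I would show that the strict decrease of both the $i_t$ and the $j_t$ forces the resulting permutation to avoid the pattern $321$. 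Equivalently, one can verify Stembridge's heap criterion directly on the heap whose column $k$ receives one box from each segment $t$ with $i_t\le k\le j_t$, the strict inequalities ensuring that no convex $\sigma_k\sigma_{k+1}\sigma_k$ configuration arises.

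The main obstacle is precisely this converse/sufficiency step. The braid-production argument above only shows that $i_1\le i_2$ is fatal, i.e.\ it gives necessity of the inequalities; to conclude that the staircase word is FC one must rule out braids in every reduced word simultaneously, not merely in the one obtained by concatenation. This global statement is exactly what the $321$-avoidance translation, or the equivalent heap-convexity check, is designed to supply, the strictness $i_1>i_2$ and $j_1>j_2$ being what prevents any braidable configuration. Everything else, namely the coset bookkeeping, reducedness, and uniqueness, is routine once the decomposition is in place.
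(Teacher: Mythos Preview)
The paper does not supply its own proof of this theorem: it is quoted verbatim from Stembridge \cite[Corollary 5.8]{St}, preceded only by the remark ``Considering right classes of $W(A_{n-1})$ in $W(A_n)$, Stembridge has described canonical reduced words\ldots''. Your inductive scheme via the parabolic coset decomposition $w=[i_1,n]\cdot u$ is exactly the mechanism that sentence alludes to, and your necessity argument---in particular the braid $\sigma_{i_2}\sigma_{i_2+1}\sigma_{i_2}$ surfacing when $i_1\le i_2$---is correct. You are also right that the converse (sufficiency) is where the content lies; the heap-convexity check you outline is the cleanest self-contained route, whereas invoking the $321$-avoidance characterization is valid but imports a result of comparable weight rather than proving the claim from first principles.
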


Inspecting the inequalities  above, we see that  the only term in expression (\ref{eq:Stembridge}) 
in which  $ \sigma_{n} $ 
  can   occur  is the first term. If $ \sigma_{n} $ does occur, then 
$j_1$ must be equal to $n$ and, whether or not $j_1$ is equal to  $n$, $\sigma_{n} $  occurs only once at most.   Similarly,  if $\sigma_{1} $ does occur in expression (1), 
  then $i_p=1$, and $\sigma_{1} =\sigma_{i_{p}}$   appears only once. 

We let $\iota$ be the automorphism of the Coxeter graph given by 
$\iota(j)= n-j+1$ for $1\le j \le n$,  and the corresponding automorphism of $W(A_{n}) $ 
given on the generators by $\iota(\sigma_j)=\sigma_{\iota(j)}$. 
We observe that composing $\iota$ with the inverse map provides an  anti-involution  
$\Delta$ of 
  $W(A_{n}) $ that preserves canonical forms. 
  
  This very canonical form was established in \cite{Jones_1983} using the monomial-diagrammatical generators of the Temperley-Lieb algebra  (the latter is the center of the second part  of this work) rather than the Coxeter generators! Before even the birth of the notion of full commutativity in Coxeter groups. From now on we mean by  a  FC element, a  FC  element in $W(A_n)$.

\subsection{Catalania: Dyck paths and ballots}

Dyck paths of length $2(n+1)$  are lattice paths  from $(0,0)$ to $(n+1, n+1)$ with steps $(0,1)$ and $(1,0)$, such that the path never rises above the line $x=y$ ((24)).

A ballot sequence of length $2(n+1)$ is a sequence with $n+1$ each of $1$'s and $-1$'s in a way in which every partial sum is nonnegative ((77)) left to right. There are two obvious bijections between  Dyck paths and ballot sequences (of length $2(n+1)$ for each). The one sending the step  $(1,0)$ to 1 and  $(0,1)$ to $-1$ and reading from bottom to top (let's call it $DB$). The other sending $(1,0)$ to $-1$ and reading from top to bottom. We write sequences with $1$ (resp. $-1$) denoted as $+$ (resp. $-$). Obviously $DB$ starts from $(0,0)$. The inverse bijection $BD$ is exactly as one would expect. 

In  \cite{Jones_1983} the late V. Jones has determined a bijection between Dyck paths and standard $p$-pairs (mentioned as an observation of H. Wilf), call it $DF$, the bijection sending any path (with $p$ peaks having  $(j_t, i_t)$ as coordinates) to the $p$-pairs $(i_t, j_t)$, thus to the canonical form in (\ref{eq:Stembridge}), since those pairs are standard. Hence, he showed that {\em The Catalan numbers count FC elements} via $DF$. In the next chapter we show that Catalan numbers count those elements "by definition", i.e. without the help of  Dyck paths, and we study the consequences of such interpretation. By composing  $DB$ after $FD$ we get $FB$ (which is treated in details in the last chapter and given explicitly in \ref{cex}). Nevertheless, in Example \ref{ex5} we can see the simplicity of presenting $FB$  by inserting Dyck paths, this insertion explains three bijections (with their inverses) in only one figure! 
 
   \section{New set-partitions: Catalan numbers coming directly from FC elements}

\subsection{ FC elements counted by the inductive definition of Catalan numbers.}

Now take any element $w$ in $A^c_n$:
  $$
  w= [ i_1, j_1 ] [ i_2, j_2 ] \dots  [ i_p, j_p ] ,
  $$
  
     \begin{definition}
			Suppose that $p>0$. We say that $w$ is thick if $j_t > i_t   \text{ for }  1\le t \le p$. We call $w$ slim if it is not thick. The set of elements of size $p$ in $A^c_n$ is denoted $A^{c,p}_{n}$.
		    \end{definition}\label{thin}
		    Set  $A^{G}_{n}$ (resp, $A^{S}_{n}$) the set of thick (resp, slim) elements in $A^c_n$. Moreover we call $A^{G,p}_{n}$ (resp, $A^{S,p}_{n}$) the set of $p$-size thick  (resp,  $p$-size slim) elements in $A^c_n$. For simplicity (and almost by definition) whenever the index $n$ or $p$ in the last sets is smaller or equal to $0$, then the set  is the empty set. We have clearly the following set-partitions : 
	$$
	A^c_n =~~ \{1 \}\bigsqcup A^{G}_{n} \bigsqcup A^{S}_{n} =~~ \{1 \}  \sqcup \bigcup_{p=1}^{p=n} A^{G,p}_{n}  \sqcup \bigcup_{p=1}^{p=n} A^{S,p}_{n}.
	$$
	\begin{lemma}\label{bijectionthick}
	
	Let  $w= [ i_1, j_1 ] [ i_2, j_2 ] \dots  [ i_p, j_p ] $ be in $A^{G,p}_{n}$, then $\bar{w}$ defined by its canonical form 
$$ \bar{w}=  [ i_1, j_1 -1 ] [ i_2, j_2 -1 ] \dots  [ i_p, j_p -1 ] $$ is in $A^{c,p}_{n-1}$ and we have the bijection: 	 
	   \begin{eqnarray}
					         \bar{.}: A^{G,p}_{n} &\longrightarrow& A^{c,p}_{n-1} \nonumber\\
					         w &\longmapsto&  \bar{w} $ ~~~ $ \nonumber
				\end{eqnarray}
				in such a way that $\ell(w) = \ell(\bar{w})+p$. This obviously gives the following size preserving bijection: 
				   \begin{eqnarray}
					        \bar{.}: A^{G}_{n} &\longrightarrow&A^c_{n-1} -\left \{ 1 \right \} \nonumber\\
					         w &\longmapsto&  \bar{w} $ ~~~ $ \nonumber
				\end{eqnarray}

	\end{lemma}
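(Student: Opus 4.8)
The plan is to verify the three structural requirements in turn: that $\bar{w}$ is a genuine element of $A^{c,p}_{n-1}$ (i.e.\ its defining pairs are standard in the sense of Theorem \ref{1_2}), that the map is well defined and bijective, and finally that the length drops by exactly $p$. First I would check the inequalities. Since $w$ is thick we have $j_t > i_t$ for all $t$, so $j_t - 1 \ge i_t \ge 1$, which guarantees each new pair $[i_t, j_t - 1]$ is a legitimate pair with $i_t \le j_t - 1$. The strict chain $n \ge j_1 > \dots > j_p \ge 1$ becomes $n-1 \ge j_1 - 1 > \dots > j_p - 1 \ge 0$; here thickness is again crucial, because $j_p > i_p \ge 1$ forces $j_p \ge 2$, hence $j_p - 1 \ge 1$, so the bottom of the chain stays positive and we land in $A^{c,p}_{n-1}$ rather than slipping below the allowed range. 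The $i_t$-chain is untouched and already satisfies $n-1 \ge i_1 > \dots > i_p \ge 1$ because $i_1 \le j_1 - 1 \le n-1$. This shows $\bar{w}$ is well defined into $A^{c,p}_{n-1}$.

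Next I would establish bijectivity by writing down the inverse explicitly. Given any $v = [i_1, k_1]\dots[i_p, k_p]$ in $A^{c,p}_{n-1}$, send it to $[i_1, k_1 + 1]\dots[i_p, k_p + 1]$. One checks that the resulting element is thick: since $k_t \ge i_t$ we get $k_t + 1 > i_t$, and the $j$-chain $k_1 + 1 > \dots > k_p + 1$ has top $k_1 + 1 \le (n-1) + 1 = n$, so it lands in $A^{G,p}_n$. The two maps are mutually inverse by inspection, as they simply shift every $j_t$ by $\pm 1$ while fixing all $i_t$. This is the cleanest way to see bijectivity, avoiding any separate injectivity/surjectivity argument.

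For the length statement I would use the canonical form directly: the length of an element in canonical form is the total number of generators, namely $\sum_{t=1}^{p}(j_t - i_t + 1)$, since the expression \eqref{eq:Stembridge} is reduced. Then $\ell(w) - \ell(\bar{w}) = \sum_{t=1}^{p}\bigl[(j_t - i_t + 1) - ((j_t - 1) - i_t + 1)\bigr] = \sum_{t=1}^{p} 1 = p$, giving $\ell(w) = \ell(\bar{w}) + p$ immediately. The one point deserving care is that the canonical form is reduced, so that length equals the number of letters; this is part of the content of Theorem \ref{1_2} and may be cited rather than reproved.

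Finally, the size-preserving bijection $\bar{.}\colon A^{G}_n \to A^c_{n-1} - \{1\}$ assembles from the graded pieces. I would argue that $A^{G}_n = \bigsqcup_{p=1}^{n} A^{G,p}_n$ and $A^c_{n-1} - \{1\} = \bigsqcup_{p=1}^{n-1} A^{c,p}_{n-1}$, and that the piecewise maps glue to a single bijection because each preserves $p$; the ranges match since $A^{G,p}_n$ is empty for $p = n$ (a thick element of size $n$ would need $n \ge j_1 > \dots > j_n \ge 1$ with all $j_t > i_t$, forcing $j_n \ge 2$ yet $j_n \ge 1$ with $n$ strict descents, an impossibility) and $A^{c,p}_{n-1}$ is empty for $p \ge n$. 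The main obstacle, modest as it is, lies entirely in the boundary bookkeeping of the first paragraph: making sure thickness is what keeps $j_p - 1 \ge 1$ so the image does not degenerate, and confirming the index ranges line up so that gluing the graded bijections yields no leftover or missing elements.
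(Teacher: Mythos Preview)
Your proof is correct and is precisely the elaboration that the paper leaves implicit: the paper's own proof reads simply ``Clear.'' Your careful check of the boundary inequalities (in particular that thickness forces $j_p-1\ge 1$ and $i_1\le n-1$), the explicit inverse, the length computation from the canonical form, and the emptiness of $A^{G,n}_n$ are exactly the verifications one would supply if asked to unpack that one word.
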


	\begin{demo}
	Clear.
	\end{demo}
	
	\begin{remark}
	
	Actually the interpretation ((107)) in  \cite{Sta15} expresses indeed the thick elements of $W^c(A_n)$ along with the identity:   $A^{G}_{n} \cup \left \{ 1 \right \}$, which we have just seen are in bijection with $W^c(A_{n-1})  $ ! With cardinality $C_n$. This thick-elements interpretation is the closest one in \cite{Sta15} to $W^c(A_n)$ as  standard $p$-pairs as looked at in this work. 
	
	\end{remark}

	The following observation is a key step of this work: take an element $w$ in  $A^{S,p}_{n}$: 
	$$
	[ i_1, j_1 ] [ i_2, j_2 ] \dots  [ i_p, j_p ]. 
	$$
	Suppose that the first block from the left made of one simple reflection is the $s$-term, in other words: 
	$$
	w=\underbrace{[ i_1, j_1 ]    \dots [ i_{s-1},  j_{s-1} ] }_{X}  [ i_{s} = j_{s},  j_{s} ]  \underbrace{[ i_{s+1}, j_{s+1} ]  \dots  [ i_p, j_p ] }_{Y}
	$$
where $X$ is either $1$ or a thick element of size $s-1$ supported in $ \left \{ \sigma_{n}, \sigma_{n-1}, \dots \sigma_{s+1} \right \} $, and $Y$ is in  $A^c_{s-1}$. Now from the canonical form conditions on indexes we see that $ i_s < i_{s-1}$ hence  $ i_s < i_t  \le j_t$ for all $1\le t \le s-1$ and $ j_s > j_{s+1}$  hence  $ j_s > j_t  \ge  i_t$ for all $s+1\le t \le p$. Which means that $ \sigma_{i_s} =  \sigma_{j_s}$ separates $X$ and $Y$, in other terms: they are independent from each other for any given $w$ as above. \\
        
More formally, 
 for a slim element $  [ i_1,j_1 ] \cdots [ i_p,j_p ]$ 
we let $r= \min\{t/ 1\le t \le p \text{ and } i_t=j_t \}$. We have $1\le r \le p$ and we let $i=i_r$, $1 \le i \le n$. 
A slim element with this $r$ and this $i$  can be written as $g \sigma_i d$ where $g$ is a thick element of size $r-1$ in $W(\sigma_n, \cdots, \sigma_{i+1} )$ if $r>1$, or $1$ if $r=1$,  and $d$ is any FC element of size $p-r$ in $W(A_{i-1})$. 
The case $i=n$, hence $r=1$,  requires a special treatment:  we get   the product of $\sigma_n$ by a FC element of $W(A_{n-1})$ of size $p-1$. Changing $i$ to $n-i$ and using the isomorphism 
$\tau_{n-i}: W(\sigma_1, \cdots, \sigma_{i} ) \longrightarrow W(\sigma_{n-i+1}, \cdots, \sigma_{n} ) $ given by $\sigma_k \mapsto \sigma_{n-i+k}$, we obtain   the following:

     \begin{proposition} The set-partition $A^c_n =  \{1 \}\cup A^{G}_{n} \cup A^{S}_{n}$  gives us  (schematically): 
 
   \begin{equation}\label{part1}
    A^c_n=  \{1 \} \cup A^{G}_{n} \cup (\bigcup_{i=0}^{i=n-1} (\tau_{n-i}(A^{G}_{i})\cup 1) \  . \ \sigma_{n-i} \  . \ A^{c}_{n-(i+1)}), 
     \end{equation}

      \end{proposition}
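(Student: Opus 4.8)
The plan is to leave the three-block partition $A^c_n = \{1\} \sqcup A^G_n \sqcup A^S_n$ untouched—it is immediate from the definitions, since a non-identity FC element has $j_t \ge i_t$ for every block and so is thick (all $j_t > i_t$) or slim (some $j_t = i_t$) but never both—and to concentrate entirely on identifying the slim part $A^S_n$ with the disjoint union $\bigcup_{i=0}^{n-1}(\tau_{n-i}(A^G_i)\cup\{1\})\cdot\sigma_{n-i}\cdot A^c_{n-(i+1)}$. The whole content is thus a bijection between $A^S_n$ and the indexing data $(i,\bar g,d)$, and I would build it directly from the separation observation already established in the text.

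First I would set up the forward map. Given a slim $w=[i_1,j_1]\cdots[i_p,j_p]$, let $r=\min\{t : i_t=j_t\}$ and $i'=i_r$, and invoke the key observation to write $w = g\,\sigma_{i'}\,d$, where $g$ is a thick element supported in $\{\sigma_{i'+1},\dots,\sigma_n\}$ (and $g=1$ exactly when $r=1$) and $d\in A^c$ is supported in $\{\sigma_1,\dots,\sigma_{i'-1}\}=W(A_{i'-1})$. Put $i=n-i'$, so that $i$ runs over $\{0,\dots,n-1\}$ as $i'$ runs over $\{1,\dots,n\}$ and $\sigma_{n-i}=\sigma_{i'}$. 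Since $\tau_{n-i}$ is the isomorphism $W(\sigma_1,\dots,\sigma_i)\to W(\sigma_{i'+1},\dots,\sigma_n)$, the prefix satisfies $g\in\tau_{n-i}(A^G_i)\cup\{1\}$ via $\bar g=\tau_{n-i}^{-1}(g)\in A^G_i$, while $d\in A^c_{i'-1}=A^c_{n-(i+1)}$. This lands $w$ in the $i$-th term of the union, and because $r$—hence $i'=i_r$ and $i=n-i'$—is read off the (unique) canonical form of $w$, it lands in exactly one term, which gives the disjointness over $i$.

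For the inverse I would take any triple $(i,\bar g,d)$ with $0\le i\le n-1$, $\bar g\in A^G_i$ (or $\bar g=1$) and $d\in A^c_{n-(i+1)}$, set $g=\tau_{n-i}(\bar g)$ and $i'=n-i$, and form $w=g\,\sigma_{i'}\,d$ by concatenating the three canonical words. The crux is to verify that the concatenation is itself in canonical form, so that $w\in A^c_n$ and the decomposition recovers $(i,\bar g,d)$. Writing $g=[a_1,b_1]\cdots[a_{r-1},b_{r-1}]$ with all $b_t>a_t\ge i'+1$ and $d=[c_1,e_1]\cdots$ with all $e_t\ge c_t$ and $e_1\le i'-1$, the three defining inequalities of Theorem \ref{1_2} splice together precisely because $b_{r-1}>i'>e_1$ and $a_{r-1}>i'>c_1$; the middle block $[i',i']$ is the first singleton, so $w$ is slim with first-singleton value $i'=n-i$, as required.

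The step that needs the most care is this last splicing check, together with its degenerate cases. When $r=1$ there is no thick prefix and $g=1$, which is why the term carries the extra $\cup\{1\}$; and when $i'=n$ the support $\{\sigma_{i'+1},\dots,\sigma_n\}$ is empty, forcing $r=1$, so the thick prefix can only be trivial—this is consistent with the $i=0$ term, where the convention $A^G_0=\emptyset$ collapses $\tau_n(A^G_0)\cup\{1\}$ to $\{1\}$ and the term becomes simply $\sigma_n\cdot A^c_{n-1}$. Once the canonical-form conditions are checked in these boundary cases, the forward and inverse maps are mutually inverse and the proposition follows.
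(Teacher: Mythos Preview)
Your argument is correct and follows exactly the approach the paper sets up in the paragraph preceding the Proposition: the paper's own proof is the single word ``Clear,'' relying on the key observation (the separation of $X$, $\sigma_{i_s}$, $Y$ via the first singleton block) that you have simply spelled out in full, including the splicing check and the boundary cases $r=1$ and $i'=n$. There is no difference in strategy---you have written out what the paper leaves implicit.
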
   
 \begin{demo}
	Clear.
	\end{demo}
The initial condition $C_0=1$ gives that the recurrence relation defining Catalan numbers can be written
$C_{n+1} = C_{n} + \sum_{i=0}^{n-1} C_i C_{n-i}$ for $n\ge 0$. We observe that the previous Proposition and Lemma imply: 
$$
\sharp(A_n^c)= 1+ ( \sharp(A_{n-1}^c )- 1 )+  \sum_{i=0}^{n-1} \sharp (A^{c}_{i-1}) .  \sharp (A^{c}_{n-(i+1)} ) 
$$
which we recognize as the recurrence relation for $C_{n+1}$ above, provided that we make the convention $ \sharp (A^{c}_{-1})=1$. 

  \begin{corollary}       
    $\sharp A^{c}_{n} = C_{n+1}$ by definition.
   \end{corollary}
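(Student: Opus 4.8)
The plan is to convert the set-partition of the Proposition (equation~\eqref{part1}) into a numerical recurrence for $\sharp A^c_n$, recognize that recurrence as the defining recurrence~\eqref{veryfirst} for $C_{n+1}$, and then close the argument by induction on $n$. First I would pass to cardinalities in~\eqref{part1}. Since that display is a \emph{disjoint} union, $\sharp A^c_n$ is the sum of the sizes of its pieces: the piece $\{1\}$ contributes $1$; the piece $A^G_n$ is handled in the next step; and each product $(\tau_{n-i}(A^G_i)\cup 1)\cdot\sigma_{n-i}\cdot A^c_{n-(i+1)}$ contributes $\sharp\!\big(\tau_{n-i}(A^G_i)\cup 1\big)\cdot\sharp A^c_{n-(i+1)}$, the fixed reflection $\sigma_{n-i}$ contributing a factor $1$.

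Next I would evaluate the thick pieces using Lemma~\ref{bijectionthick}. Its size-preserving bijection $A^G_n \to A^c_{n-1}\setminus\{1\}$ gives $\sharp A^G_n = \sharp A^c_{n-1}-1$; and as $\tau_{n-i}$ is an isomorphism, $\sharp\!\big(\tau_{n-i}(A^G_i)\cup 1\big)=\sharp A^G_i + 1 = \sharp A^c_{i-1}$, a formula that remains valid at the boundary $i=0$ precisely under the convention $\sharp A^c_{-1}=1$ (there $A^G_0=\varnothing$). Substituting yields
$$\sharp A^c_n = \sharp A^c_{n-1} + \sum_{i=0}^{n-1} \sharp A^c_{i-1}\cdot \sharp A^c_{n-(i+1)},$$
which is exactly the relation recorded before the Corollary.

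Finally I would run the induction. The base case is $A^c_0=W^c(A_0)=\{1\}$, so $\sharp A^c_0=1=C_1$, and the convention $\sharp A^c_{-1}=1=C_0$ extends the identity $\sharp A^c_m=C_{m+1}$ down to $m=-1$. Assuming $\sharp A^c_m=C_{m+1}$ for all $m<n$, the displayed recurrence becomes $\sharp A^c_n = C_n + \sum_{i=0}^{n-1} C_i\,C_{n-i}$, using $\sharp A^c_{i-1}=C_i$ and $\sharp A^c_{n-(i+1)}=C_{n-i}$. Splitting the term $k=n$ off the sum in~\eqref{veryfirst} shows $C_{n+1}=C_n+\sum_{k=0}^{n-1}C_k C_{n-k}$, so the two right-hand sides coincide and $\sharp A^c_n=C_{n+1}$, the count arising directly from the Catalan recurrence rather than through any bijection.

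The algebra here is routine; the point that genuinely demands care, and which I would check even though the Proposition packages it, is that the product $(\tau_{n-i}(A^G_i)\cup 1)\cdot\sigma_{n-i}\cdot A^c_{n-(i+1)}$ has cardinality equal to the \emph{product} of the cardinalities of its factors. A product of subsets of a group does not in general multiply sizes; it does so here only because each such $w$ has a \emph{unique} factorization $w=g\,\sigma_{n-i}\,d$. This uniqueness is the substantive content, resting on the canonical form of Theorem~\ref{1_2} together with the separation observation preceding the Proposition, namely that $\sigma_{i_s}=\sigma_{j_s}$ makes the thick prefix $X$ and the trailing factor $Y$ independent, so that every slim element is counted exactly once in exactly one summand.
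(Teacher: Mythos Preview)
Your proposal is correct and follows essentially the same route as the paper: pass to cardinalities in the set-partition~\eqref{part1}, use Lemma~\ref{bijectionthick} to evaluate the thick pieces, adopt the convention $\sharp A^c_{-1}=1$, and recognize the resulting recurrence as $C_{n+1}=C_n+\sum_{i=0}^{n-1}C_iC_{n-i}$. You are simply more explicit than the paper about the induction and about why the product set has cardinality equal to the product of cardinalities, which is a welcome clarification.
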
      
         	 This partition of the Catalan number can be interpreted    in the context of Dyck paths as distinguishing between paths touching $x=y$ (slim paths) and paths strictly below $x=y$, these can be looked at as paths from $(0,0)$ to $(n,n)$, and the rest is obvious. Yet one can see how natural this separation is in the context of standard  $p$-pairs, i.e. FC elements. 
	 
 \subsection{Sub-partitions (one parameter partitions of $C_{n+1}$): Narayana numbers }

The set of FC elements is naturally partitioned according to the size $p$, $0 \le p \le n$, of its elements  
  (where $1$ is the unique element of size $0$). We  let 
  $\C_n^p$,  $0 \le p \le n$, be the cardinality of $A^{c,p}_{n}$, i.e. the number of FC elements having  size $p$ in $W(A_n)$. By convention we agree on   $\C_n^p=0$ for $n<0$ or $p <0$ or $p > n$. We have: 
  $
  C_{n+1} = \sharp A_n^c = \sum_{p=0}^{p=n}   \C_n^p.
  $
 Our purpose in this section  is to exhibit an explicit formula for the terms $\C_n^p$ using only the settings of FC elements. By definition we have $$\C_{n}^0=1  ;  \quad \C_{n}^n=1. $$ 
For the only element of size $n$ is $\sigma_n \sigma_{n-1} \dots \sigma_2 \sigma_1$. Using the set-partition (\ref{part1}), we  get the formula : 
  \begin{equation}\label{part1cardinalities}
\C_n^p = \C_{n-1}^p +  \C_{n-1}^{p-1} + \sum_{r=1}^p  \sum_{i=1}^{n-1}    \C_{n-i-1}^{r-1}     \   \C_{i-1}^{p-r}
 \qquad (p\ge 1 , n\ge 1). 
     \end{equation}

We notice that for $r=1$ and $i<n$ the terms $\C_{n-i-1}^{r-1}  $ are all $1$ while for  $r>1$   the sum on $i$ is really a sum on $i \le n-r  $ since,   
 for the leftmost thick element, we must have $n \ge j_1 > i_1 > \cdots >i_{r} $;   with our conventions the  terms in excess are $0$. We also observe that the formula holds for $p=0$, as $1=1$. \\  
This implies  inductive formulae for 
 $\C_n^1$, $\C_n^2$,  $\C_n^3$,  from which we can compute directly those quantities, 
using the known values for  the sums of integral powers. We give the first terms as examples: 
$$
\begin{aligned}   \C_{n}^1   &= \C_{n-1}^1 + n  \quad \text{for } n \ge 2, 
\qquad \C_{n}^1 =  1 +  \cdots + n = \frac{n(n+1)}{2}= \frac{1}{2} \binom{n}{1}    \binom{n+1}{1} ; \\ 
     \C_{n}^2 &=     \C_{n-1}^2  
    +   \C_{n-1}^1 +  2   \sum_{i=1}^{n-2} \C_i^1  \quad \text{for } n \ge 3, \quad 
      \C_{n}^2=       \frac{(n-1)n^2 (n+1)}{12} = \frac{1}{3}  \binom{n}{2}     \binom{n+1}{2} ; 
 \\
 \C_{n}^3   &=       \C_{n-1}^3 +   \C_{n-1}^2  + 2 \  \sum_{i=2}^{n-2} \C_{i}^2  
        +   \sum_{i=1}^{n-3} \C_{i}^1 \C_{n-i-2}^1  \quad \text{for } n \ge 4, \\ 
       \C_{n}^3 &=   \frac{(n-2) \ (n-1)^2 \ n^2 \  (n+1) }{144}  =   \frac 1 4 \binom{n}{3} \binom{n+1}{3}       .  
 \end{aligned} $$ 
At that point we conjecture that $\C_{n}^p   =   \frac{1}{p+1} \  \binom{n}{p} \    \binom{n+1}{p}. $

Our aim is to derive from (\ref{part1cardinalities}) an expression for the generating function of 
  $\C_n^p$,  $0 \le p \le n$, namely $E(x,y)= \sum_{n \ge 0, p\ge 0} \C_n^p x^p y^n.$ In the appendix, one can see the details  that drive $E(x,y)$, which is equal to
  $$
 \frac{(1-y-xy)}{2x y^2} \left[  1 - \left(  1 - 4 x y^2 (1-y-xy)^{-2} \right)^{\frac{1}{2}}  \right] = \sum_{t \ge 0}  \frac{1}{t+1} \binom{2t}{t}  x^t y^{2t} (1-y-xy)^{-2t-1}.
$$
 In particular, we can fix $n$ and develop 
the one-variable generating function $F_n(x) = \sum_{p \ge 0} \C_n^p x^p$, which is the coefficient of $y^n$ 
in the above series. We obtain: 

$$\begin{aligned}
F_n(x)   &=     \sum_{p=0}^{n}   \binom{n}{p}    \left(     \sum_{t=0}^{p}     \frac{1}{t+1}  \binom{p}{t} 
 \binom{n-p}{t} 
   \right)   x^{p}.
\end{aligned}
$$

$E(x,y)$ is already known in literature. Actually we chose to use it because we could not check our hypothesis directly using recurrence relation (\ref{part1cardinalities}) which misses the "linearity" that the following recurrence relations in this work have. Summing up, with the simple Lemma \ref{binomiallemma} in the Appendix, we have proved Narayana's theorem ((59))  (in that version  $\C_n^p$ is the number of paths with $p$ peaks): 
\begin{theorem}\label{Narayana}  With the above settings we have
$$
\C_n^p =\frac{1}{p+1} \binom{n }{p}  \binom{n+1}{p}. 
$$
\end{theorem}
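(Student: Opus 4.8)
The plan is to treat the theorem as a statement about the coefficients of the one-variable generating function $F_n(x)=\sum_{p\ge 0}\C_n^p x^p$ that has already been extracted from $E(x,y)$, and to reduce it to a single binomial identity. Reading off the coefficient of $x^p$ from the displayed expression for $F_n(x)$, we have
$$
\C_n^p = \binom{n}{p}\sum_{t=0}^{p}\frac{1}{t+1}\binom{p}{t}\binom{n-p}{t},
$$
so, since $\binom{n}{p}\neq 0$ for $0\le p\le n$, the whole theorem is equivalent to the identity
$$
\sum_{t=0}^{p}\frac{1}{t+1}\binom{p}{t}\binom{n-p}{t} = \frac{1}{p+1}\binom{n+1}{p}.
$$
This is exactly the sort of statement I expect Lemma \ref{binomiallemma} in the appendix to supply; below I sketch the route I would take to establish it.

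First I would clear the denominator $t+1$ using the absorption identity $\frac{1}{t+1}\binom{p}{t}=\frac{1}{p+1}\binom{p+1}{t+1}$, which pulls the factor $\frac{1}{p+1}$ outside the sum and rewrites the left-hand side as $\frac{1}{p+1}\sum_{t=0}^{p}\binom{p+1}{t+1}\binom{n-p}{t}$. Next I would reindex by $k=t+1$, turning the sum into $\frac{1}{p+1}\sum_{k=1}^{p+1}\binom{p+1}{k}\binom{n-p}{k-1}$. Writing $\binom{n-p}{k-1}=\binom{n-p}{(n-p+1)-k}$ exhibits this as a Vandermonde convolution, $\sum_{k}\binom{p+1}{k}\binom{n-p}{(n-p+1)-k}=\binom{n+1}{n-p+1}=\binom{n+1}{p}$, which settles the identity; multiplying back by $\binom{n}{p}$ then yields the Narayana closed form.

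The main obstacle is not in this last computation — given $F_n(x)$, the identity is a routine Vandermonde application — but lies upstream, in the derivation of $E(x,y)$ and the passage to $F_n(x)$, which I am assuming from the appendix. Within the identity itself, the only point demanding care is the bookkeeping of ranges: I must check that the out-of-range binomial coefficients vanish under the standard convention, so that the finite sum over $t$ coincides with the formally unrestricted Vandermonde sum over $k$, and that the chosen form of the absorption identity is the one making the convolution land on $\binom{n+1}{p}$ rather than a neighbour. I would deliberately avoid the alternative of verifying that the Narayana numbers satisfy the recurrence (\ref{part1cardinalities}) directly by induction: as the text notes, that recurrence lacks the linearity that would make such a substitution tractable, and the double sum $\sum_{r}\sum_{i}\C_{n-i-1}^{r-1}\C_{i-1}^{p-r}$ would force a nested Vandermonde argument far messier than the single convolution above.
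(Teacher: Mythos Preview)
Your proposal is correct and follows the same overall architecture as the paper: accept the derivation of $E(x,y)$ and $F_n(x)$ from the appendix, read off $\C_n^p=\binom{n}{p}\sum_{t=0}^{p}\frac{1}{t+1}\binom{p}{t}\binom{n-p}{t}$, and then reduce everything to the identity of Lemma~\ref{binomiallemma}. The only genuine difference is in how that lemma is established. The paper proves it by an integration trick: it recognises $\sum_{t}\frac{1}{t+1}\binom{p}{t}x^{t+1}$ as $\int_0^x(1+u)^p\,du=\frac{1}{p+1}\bigl[(1+x)^{p+1}-1\bigr]$, multiplies by $(1+x)^{n-p}$, and extracts the coefficient of $x^{n-p+1}$. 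Your route is purely algebraic: the absorption identity $\frac{1}{t+1}\binom{p}{t}=\frac{1}{p+1}\binom{p+1}{t+1}$ followed by a shift and Vandermonde. Both are standard and of comparable length; your version has the small advantage of avoiding any analytic step, while the paper's version makes the appearance of $\frac{1}{p+1}$ slightly more conceptual (it is literally the constant of integration). Your remark about out-of-range terms is well placed: the $k=0$ term contributes $\binom{n-p}{n-p+1}=0$, so the restricted sum matches the full Vandermonde convolution. Nothing is missing.
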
  
Which implies the Narayana partition of Catalan: $ C_{n+1} =   \sum_{p=0}^{p=n}    \frac{1}{p+1}  \binom{n}{p} \binom{n+1}{p}$  \cite{Narayana}.\\

The fact that 
$ \   \C_{n}^p   $ is equal to $   \dfrac{n! (n+1)!}{p! (n-p)!(p+1)!(n-p+1)!}   $,   implies $  \C_{n}^p =  \C_{n}^{n-p}.$   
We could prove (and going to do in few lines) directly this "duality" property purely by the settings of FC elements and without even needing the value of $  \C_{n}^p $ above, as follows, and in our way we prove a stronger statement, that is to define a  permutation of $A^c_n$ relating length and size,
which we believe  needs more investigation, especially in other Catalan phenomena that are in explicit bijection with FC elements.

Again an element in $A^c_n$ is characterized by  its size $p$, $0\le p \le n$, and  two sequences  $(i_1, \dots, i_p)$ and $(j_1, \dots,j_p)$  satisfying 
$$n\ge i_1 > \dots > i_p \ge 1,   \quad    n\ge j_1 > \dots > j_p \ge 1 ,  \quad    
 j_t \ge i_t   \text{ for }  1\le t \le p. $$ 
Now  $\{ i_1, \dots, i_p\}$ is a subset of $\{  1, \dots, n\}$ with $p$ elements. The complement of this subset 
 is a subset with $n-p$ elements of $\{  1, \dots, n\}$ that can be ordered, hence there is a unique way of writing it  $\{ j_1^\ast, \dots, j_{n-p}^\ast \}$ with  
 $n\ge j_1^\ast > \dots > j_{n-p}^\ast \ge 1 $. In the same way we find a sequence 
 $\{ i_1^\ast, \dots ,i_{n-p}^\ast \}$ satisfying  $n\ge i_1^\ast > \dots > i_{n-p}^\ast \ge 1 $ and 
$\{ i_1^\ast, \dots ,i_{n-p}^\ast \} \cup \{j_1, \dots, j_p\} =  \{  1, \dots, n\} $. We claim that  
$$ j_t^\ast \ge i_t^\ast   \quad    \text{ for }  1\le t \le n- p.$$
Indeed let $a= j_t^\ast$. We want to show that $i_t^\ast \le a$.  The subset $S=\{ n, n-1, \dots, a +1\}$ contains $t-1$ elements of the sequence $(j_s^\ast)$ 
and $t'$ elements of the sequence $(i_s)$, with $t-1+t' =n-a $. Since $j_s \ge i_s$ for all $s$, the subset $S$ contains at least $t'$ elements of the sequence $(j_s)$, hence it contains at most $t-1$ elements of the sequence 
 $(i_s^\ast)$ and   $i_t^\ast \le a$ as claimed. We  subsume this discussion:
   
\begin{proposition}\label{duality}
Let $w=
	[ i_1, j_1 ] [ i_2, j_2 ] \dots  [ i_p, j_p ] 
	 $ be a FC element in $W(A_n)$ given canonically. We denote by $w^\ast$ the "dual" element of $w$, namely 
$$w^\ast =
	[ i_1^\ast, j_1^\ast ] [ i_2^\ast, j_2^\ast ] \dots  [ i_{n-p}^\ast, j_{n-p}^\ast ], 
$$ 
where the strictly decreasing sequences  $\{ i_1^\ast, \dots, i_{n-p}^\ast \}$ and  $\{ j_1^\ast, \dots, j_{n-p}^\ast \}$ are defined by: 
$$\{ i_1^\ast, \dots ,i_{n-p}^\ast \} \cup \{j_1, \dots, j_p\} =  \{ j_1^\ast, \dots ,j_{n-p}^\ast \} \cup \{i_1, \dots, i_p\} =   \{  1, \dots, n\} .$$

Then $w^\ast $ belongs to $A^c_n$ and the duality map $w \mapsto w^\ast$  is an involutive bijection of $A^c_n$ that maps size $p$ elements onto size $n-p$ elements such that $\ell(w) - \ell(w^\ast) = 2p-n$. In particular we have  $$  \C_{n}^p =  \C_{n}^{n-p}.$$
\end{proposition}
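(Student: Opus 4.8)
The plan is to dispatch the four assertions of the proposition --- that $w^\ast\in A^c_n$, that $w\mapsto w^\ast$ is an involution (hence a bijection) sending size $p$ to size $n-p$, that $\ell(w)-\ell(w^\ast)=2p-n$, and finally $\C_n^p=\C_n^{n-p}$ --- in that order, reducing each to elementary arithmetic on complementary subsets of $\{1,\dots,n\}$. First I would verify membership $w^\ast\in A^c_n$, which is the heart of the matter. The strict decrease of $(i_t^\ast)$ and $(j_t^\ast)$ holds by construction, since each is the decreasing enumeration of a subset of $\{1,\dots,n\}$; the only substantive condition is $j_t^\ast\ge i_t^\ast$ for $1\le t\le n-p$. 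I would establish this exactly by the counting argument laid out just before the statement: fix $a=j_t^\ast$ and consider $S=\{a+1,\dots,n\}$. As $\{j_s^\ast\}$ is the complement of $\{i_s\}$, the set $S$ splits into exactly $t-1$ values $j_s^\ast$ and some $t'$ values $i_s$, with $(t-1)+t'=n-a$. The hypothesis $j_s\ge i_s$ forces every index $s$ with $i_s\in S$ to have $j_s\in S$ as well, so $S$ meets $(j_s)$ in at least $t'$ points; since $\{i_s^\ast\}$ is the complement of $\{j_s\}$, this leaves at most $(n-a)-t'=t-1$ values $i_s^\ast$ inside $S$, whence $i_t^\ast\le a=j_t^\ast$.

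Next I would record the defining relations in set form, $\{i_t^\ast\}=\{1,\dots,n\}\setminus\{j_t\}$ and $\{j_t^\ast\}=\{1,\dots,n\}\setminus\{i_t\}$, the orderings being forced by strict decrease. Because complementation inside $\{1,\dots,n\}$ is an involution, starring twice returns $\{(i_t^\ast)^\ast\}=\{i_t\}$ and $\{(j_t^\ast)^\ast\}=\{j_t\}$, so $(w^\ast)^\ast=w$; thus the map is its own inverse, hence a bijection, and by construction it carries $A^{c,p}_n$ onto $A^{c,n-p}_n$.

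For the length identity I would write $\ell(w)=\sum_t(j_t-i_t)+p$, and likewise $\ell(w^\ast)=\sum_t(j_t^\ast-i_t^\ast)+(n-p)$, each block $[i,j]$ contributing $j-i+1$ to the length. The complement relations give $\sum_t j_t^\ast=\binom{n+1}{2}-\sum_t i_t$ and $\sum_t i_t^\ast=\binom{n+1}{2}-\sum_t j_t$, so on subtracting the binomial terms cancel and $\sum_t(j_t^\ast-i_t^\ast)=\sum_t(j_t-i_t)$. Hence $\ell(w^\ast)=\sum_t(j_t-i_t)+(n-p)=\ell(w)-2p+n$, i.e. $\ell(w)-\ell(w^\ast)=2p-n$.

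Finally, since $w\mapsto w^\ast$ is a bijection between the size-$p$ and size-$(n-p)$ elements of $A^c_n$, comparing cardinalities yields $\C_n^p=\C_n^{n-p}$ at once. The only genuinely nontrivial step is the inequality $j_t^\ast\ge i_t^\ast$ of the first paragraph --- the interplay between the two complementations and the constraints $j_s\ge i_s$ --- and I expect that to be the main obstacle; the involution and the length identity are then pure complement bookkeeping.
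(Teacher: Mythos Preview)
Your proposal is correct and matches the paper's own argument: the one substantive step, the inequality $j_t^\ast \ge i_t^\ast$, is proved by exactly the complement-counting argument the paper gives in the paragraph preceding the statement. The involution and the length identity, which the paper leaves implicit, you spell out correctly via double complementation and the relation $\sum_t j_t^\ast - \sum_t i_t^\ast = \sum_t j_t - \sum_t i_t$.
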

As we have just mentioned, this proposition has a strong meaning in the Coxeter settings for it relates the length of any element to the length of its dual, we could not find an equivalent in the literature in other Catalan phenomena (neither in FC elements), this bijection not preserving length neither size should have even meaning in the other Catalan interpretations, highly likely a strong meaning.

\subsection{Sub-partitions (one parameter partitions of $C_{n+1}$):  Catalan triangle}

We consider the canonical form of a fully commutative element $w$. We wish to count the number of FC elements starting with $i:=i_1$ (a convenient shortcut for:  the canonical word for $w$ starts with $\sigma_i$ on the left). Let us call the set of such elements $^{i}A^c_n$ and call its cardinality $^{i}S_n$, in the same way we define $S_n^j$ where $j:=j_p$. Notice that $^{n}S_n = C_n $ and that $^{1}S_n = n$. We let 
  $^{0}A^c_n= \{ 1 \}$.
We have the following set-partition: 

 \begin{proposition} We have  $A^c_n =  \{1 \}\cup( \bigcup_{i=1}^{i=n}(^{i}A^c_n))$, and in details, for $1\le i \le n$: 
 
   \begin{equation}\label{part2}
    ^{i}A^c_n= \    ^{i}A^c_{n-1} \cup (\bigcup_{k=0}^{k=i-1} (\sigma_{i}\sigma_{i+1} \dots \sigma_{n} \  . \ ^{k}A^c_{n-1}). 
     \end{equation}

      \end{proposition}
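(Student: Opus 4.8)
The plan is to read both assertions directly off the uniqueness of the canonical form in Theorem \ref{1_2} and to split an element according to whether $\sigma_n$ occurs in it. For the coarse partition $A^c_n = \{1\}\cup\bigcup_{i=1}^{n}{}^iA^c_n$, I would use that every nontrivial $w=[i_1,j_1]\cdots[i_p,j_p]$ has a well-defined leftmost pair, so the integer $i_1$ is a genuine invariant of $w$ (the canonical form being unique by Theorem \ref{1_2}). Thus the sets ${}^iA^c_n$, $1\le i\le n$, are pairwise disjoint, and together with the identity they exhaust $A^c_n$, giving the first displayed equality.

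For the recurrence (\ref{part2}) I fix $i$ with $1\le i\le n$, take $w\in{}^iA^c_n$ (so $i_1=i$), and dichotomize on whether $\sigma_n\in Supp(w)$, recalling from the remark after Theorem \ref{1_2} that $\sigma_n$ can occur only in the first block $[i_1,j_1]$ and that its occurrence forces $j_1=n$. In the case $\sigma_n\notin Supp(w)$, every generator of the canonical word lies in $\{\sigma_1,\dots,\sigma_{n-1}\}$, so $w$ is an FC element of $W(A_{n-1})$ still beginning with $\sigma_i$, i.e.\ $w\in{}^iA^c_{n-1}$ (this case being empty when $i=n$, consistently with ${}^nA^c_{n-1}=\emptyset$). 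In the case $\sigma_n\in Supp(w)$, we have $j_1=n$, the first block is $[i,n]=\sigma_i\sigma_{i+1}\cdots\sigma_n$, and $w=[i,n]\,w'$ with $w'=[i_2,j_2]\cdots[i_p,j_p]$ (or $w'=1$ when $p=1$). The canonical inequalities $i_1>i_2$ and $j_1>j_2$ give $i_2\le i-1$ and $j_2\le n-1$, so $w'\in W(A_{n-1})$ and $w'$ is either the identity or begins with $\sigma_{i_2}$, $i_2\le i-1$. Setting $k=i_2$ (with $k=0$ when $w'=1$), this reads $w\in\sigma_i\cdots\sigma_n\cdot{}^kA^c_{n-1}$ with $0\le k\le i-1$, which establishes the inclusion of the left-hand side into the right-hand side.

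For the reverse inclusion I would verify that prepending the block $[i,n]$ to the canonical word of any $w'\in{}^kA^c_{n-1}$, $0\le k\le i-1$, produces a legal canonical form: the new leftmost pair $(i,n)$ satisfies $n\ge i$, while the required inequalities $n>j_2$ and $i>i_2=k$ hold because $Supp(w')\subseteq\{\sigma_1,\dots,\sigma_{n-1}\}$ and $k\le i-1$. Hence $\sigma_i\cdots\sigma_n\cdot w'\in{}^iA^c_n$ and contains $\sigma_n$, giving the opposite containment. Disjointness of the right-hand union is then immediate: the term ${}^iA^c_{n-1}$ and the remaining terms are separated by the presence of $\sigma_n$, and within the $\sigma_n$-containing terms two distinct values of $k$ force distinct leftmost indices of the tail $w'$, hence distinct elements.

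The only step that carries real content, and the one I would write out most carefully, is the verification in the reverse inclusion that prepending $[i,n]$ keeps the expression reduced and in canonical form; everything else is bookkeeping with the inequalities of Theorem \ref{1_2}. I expect this to be routine precisely because the constraints $k\le i-1$ and $Supp(w')\subseteq\{\sigma_1,\dots,\sigma_{n-1}\}$ are exactly the two canonical-form conditions ($i_1>i_2$ and $j_1>j_2$) needed to legally place $[i,n]$ as the first block.
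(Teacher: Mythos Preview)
Your argument is correct and is precisely the natural unpacking of the paper's one-word proof (``Clear''): you partition by the invariant $i_1$ from the canonical form of Theorem \ref{1_2}, then split ${}^iA^c_n$ according to whether $j_1=n$ (equivalently $\sigma_n\in Supp(w)$), and check that stripping or prepending the block $[i,n]$ matches the canonical-form inequalities. There is nothing to add; the paper simply leaves all of this implicit.
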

     
 \begin{demo}
	Clear.
	\end{demo}

By letting  $^{i}S_n$ as above and considering the set partition (\ref{part2}) with the obvious initial conditions, we get: 
\begin{equation}\label{part2cardinalities}
 ^{i}S_n = \   ^{i}S_{n-1} + \sum_{k=0}^{k=i-1}   (^{k}S_{n-1}) =  \sum_{k=0}^{k=i}   (^{k}S_{n-1}). \ \text{Hence,}
\end{equation}

 $$
 ^{i}S_n  =  \     ^{i-1}S_{n} + ^{i}S_{n-1} .
 $$
 
 Basically this is the recurrence defining the Catalan triangle. The interesting fact here is that proving the following identity (and all the following partition identities for what it matters) is nothing but  verifying easily the proposed hypothesis resulting from giving small values for the parameters (here $i=1,2,3$) against a fairly simple recurrence relation as the last one, without any need for generating functions!
 
 \begin{corollary}\label{triangle} We have the following partitions (the second follows  under the action of  $\Delta$):
 
 $$
^{i}S_n  =   \frac{n+1-i}{n+1} \binom{n+i}{i}.
$$
 $$ S^{j}_{n}  = ^{n-j+1}S_{n} =  \frac{j}{n+1} \binom{2n-j+1}{n}.$$

\end{corollary}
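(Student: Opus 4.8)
The plan is to establish the closed form for $^{i}S_n$ by a double induction fed by the two-term recurrence $^{i}S_n = {}^{i-1}S_n + {}^{i}S_{n-1}$ displayed just after (\ref{part2cardinalities}), and then to obtain the formula for $S^{j}_n$ from it by transporting along the anti-involution $\Delta$ and invoking the symmetry of binomial coefficients. The whole point is that the Catalan-triangle recurrence pins the array down uniquely, so proving the formula amounts to checking that the candidate satisfies the same recurrence and the same boundary data.

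First I would set $f(n,i) = \frac{n+1-i}{n+1}\binom{n+i}{i}$ and observe that the array $\bigl(^{i}S_n\bigr)$ is completely determined by the boundary value $^{0}S_n = 1$ (for all $n\ge 0$, since $^{0}A^c_n = \{1\}$) together with the recurrence, because the sum form (\ref{part2cardinalities}) computes each entry of row $n$ from row $n-1$. Thus $\bigl(^{i}S_n\bigr)$ is the unique solution of this initial-value problem, and it suffices to check that $f$ obeys the same data. The boundary and base cases are immediate, $f(n,0)=1$ and $f(0,0)=1$. For the out-of-range convention $^{i}S_n=0$ when $i>n$ (no FC element of $W(A_n)$ can begin with a $\sigma_i$ outside its support) the candidate is automatically compatible, since the factor $n+1-i$ forces $f(n-1,n)=0$.

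The heart of the argument is the Pascal-type identity $f(n,i)=f(n,i-1)+f(n-1,i)$. I would verify it by extracting the common factor $\frac{(n+i-1)!}{i!\,n!}$ from all three terms, which collapses the identity to the polynomial equality $(n+1-i)(n+i) = i(n+2-i) + (n-i)(n+1)$; expanding both sides yields $n^2+n+i-i^2$, so the identity holds. Running an outer induction on $n$ and, for fixed $n$, an inner induction on $i$ (invoking the outer hypothesis for $^{i}S_{n-1}$ and the inner hypothesis for $^{i-1}S_n$) then gives $^{i}S_n = f(n,i)$ for all $0\le i\le n$. As consistency checks, $f(n,1)=n$ and $f(n,n)=\frac{1}{n+1}\binom{2n}{n}=C_n$, matching the stated values $^{1}S_n=n$ and $^{n}S_n=C_n$.

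For the second formula I would use that $\Delta=\iota\circ(\,\cdot\,)^{-1}$ is an anti-involution preserving canonical forms with $\iota(\sigma_k)=\sigma_{n-k+1}$: reversing a reduced word sends the leftmost letter $\sigma_{i_1}$ of $w$ to the rightmost letter of $w^{-1}$, and applying $\iota$ turns it into $\sigma_{n-i_1+1}$, which is exactly the last letter $\sigma_{j_p}$ of the canonical form of $\Delta(w)$. Hence $\Delta$ restricts to a bijection $^{i}A^c_n \to \{\,w : j_p = n-i+1\,\}$, giving $S^{j}_n = {}^{n-j+1}S_n$. Substituting $i=n-j+1$ into the first formula and using $\binom{2n-j+1}{n-j+1}=\binom{2n-j+1}{n}$ produces $S^{j}_n=\frac{j}{n+1}\binom{2n-j+1}{n}$, as required. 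The only step demanding real care is making this $\Delta$-bijection precise, i.e. checking that $i_1=i$ for $w$ is equivalent to $j_p=n-i+1$ for $\Delta(w)$; everything else is the bookkeeping of the induction and the single-line polynomial identity, exactly as the paper anticipates.
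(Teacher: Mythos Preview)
Your proposal is correct and follows exactly the approach the paper sketches: verify that the candidate $f(n,i)=\frac{n+1-i}{n+1}\binom{n+i}{i}$ satisfies the two-term recurrence $^{i}S_n = {}^{i-1}S_n + {}^{i}S_{n-1}$ together with the boundary data, then deduce the second formula from the first via the anti-involution $\Delta$. Your algebraic check of the recurrence and your handling of the $\Delta$-transport (noting that $\Delta(w)$ has canonical form $[n-j_p+1,n-i_p+1]\cdots[n-j_1+1,n-i_1+1]$, so its last index is $n-i_1+1$) are both sound; this is precisely what the paper means by ``verifying easily the proposed hypothesis \dots\ against a fairly simple recurrence relation''.
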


 Moreover, we could make use of the set-partition (\ref{part1}) to see (via thick and thin elements this time) that $^{i}S_n $ verifies the following recurrence relation:
\begin{equation}
^{i}S_n =   \    ^{i}S_{n-1} + \sum_{k=1}^{k=i}    (^{i-k} S _{n-(k+1)} C_{k}), \text{ which is}  \ ^{i}S_n = C_i + \sum_{k=0}^{k=i-1}    (^{i-k} S _{n-(k+1)} C_{k}). 
\end{equation}

For instance, when we take the sum $ \sum_{i=0}^{i=n} $ on the two sides we arrive in two movements to the recurrence definition of Catalan numbers. On the other hand this partition is sufficient to prove Corollary \ref{triangle} with a bit more sophisticated calculations, yet it shows how the Catalan triangle is given as a function in Catalan numbers, while the Catalan number itself is given basically as a partition function in the  triangle as follows, using the last Proposition :

$$
C_{n+1} =   \sum_{i=0}^{i=n}  \   ^{i}S_n  \   =  \sum_{i=0}^{i=n}  
\   \frac{n+1-i}{n+1} \    \binom{n+i}{i} .
$$
\begin{remark}
 Partition (\ref{part2}) was the starting point of the work with C. Blondel \cite{Sadek_David},  investigating the Poincar\'e polynomial of $W(A^c _n)$,  in which the $q$-version of this partition has given some families  of polynomials which are far from being understood.\\

\end{remark}

\subsection{Double partitions (two parameters partitions of $C_{n+1}$) and more}

Here we wish to treat sub-sub-partitions, that come from the ramifications of combining  partitions (\ref{part1}) and (\ref{part2}), as the self-explaining: $^{i, j_1}S_n$ (thus $S_n^{i_p, j}$) and $^{i} \overset{p}{S_{n}}$ (thus $\overset{p}{S_{n}  ^{j} }$). In addition to the  enumeration $ ^{i}S_n^{j}$ (thus $ ^{j_1}S_n^{i_p}$), this last one is to be treated aside. We point out that to avoid repeated calculations we mention only the results and the starting partitions when needed. Now take for instance $^{i_1, j_1}S_n$ considering  partition (\ref{part2}):
$$
^{i_1, j_1}S_n =  \      ^{i_1, j_1}S_{n-1} +  \sum_{k=0}^{k=i_1 -1}    \    ^{k} S _{n-1} ,
$$
which implies $ \ ^{i_1, j_1}S_n - \  ^{i_1-1, j_1}S_n = \ ^{i_1-1} S_{j_1-1}$. 
Notice that $^{i_1, i_1}S_n = C_{ i_1} $ and  $^{1, j_1}S_n = 1 $. We get the following

 \begin{corollary}
 
\begin{equation}
^{i_1, j_1}S_n = \frac{j_1 - i_1 +2}{j_1+1} \  \binom{j_1+i_1-1}{j_1}, \text{ hence}
\end{equation}

\begin{equation}
S_n^{i_p, j_p} =  \frac{j_p - i_p +2}{n  - i_p + 2} \   \binom{2n- j_p - i_p+1 }{n-j_p}.
\end{equation}\*

  \end{corollary}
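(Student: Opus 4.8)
The plan is to reduce the entire corollary to the Catalan-triangle formula already proved in Corollary \ref{triangle}, by first isolating the clean identity
\[
{}^{i_1,j_1}S_n = {}^{\,i_1-1}S_{j_1},
\]
valid for every $n\ge j_1$. The first thing I would record is that the right-hand side is independent of $n$, so the same must be true of ${}^{i_1,j_1}S_n$; this is precisely what the target formula predicts.

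To establish the identity I would fix the leading block and analyse the tail. Write $w=[i_1,j_1]\,v$ with $v=[i_2,j_2]\cdots[i_p,j_p]$ (allowing $v=1$). The canonical-form inequalities force $i_2\le i_1-1$ and $j_2\le j_1-1$, and conversely any FC element $v$ supported on $\{\sigma_1,\dots,\sigma_{j_1-1}\}$ whose first left-index is at most $i_1-1$ yields an admissible $w$. Hence ${}^{i_1,j_1}S_n$ counts the FC elements of $W(A_{j_1-1})$ whose leftmost generator is $\sigma_k$ with $0\le k\le i_1-1$ (the value $k=0$ being the identity), that is $\sum_{k=0}^{i_1-1}{}^{k}S_{j_1-1}$. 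By the Catalan-triangle recurrence (\ref{part2cardinalities}) this sum collapses to ${}^{\,i_1-1}S_{j_1}$, giving the identity. Equivalently, one may telescope the difference relation ${}^{i_1,j_1}S_n-{}^{i_1-1,j_1}S_n={}^{\,i_1-1}S_{j_1-1}$ recorded above, starting from the base value ${}^{1,j_1}S_n=1$. Substituting the closed form of Corollary \ref{triangle},
\[
{}^{\,i_1-1}S_{j_1}=\frac{j_1+1-(i_1-1)}{j_1+1}\binom{j_1+(i_1-1)}{i_1-1}=\frac{j_1-i_1+2}{j_1+1}\binom{i_1+j_1-1}{i_1-1},
\]
and rewriting $\binom{i_1+j_1-1}{i_1-1}=\binom{i_1+j_1-1}{j_1}$, yields exactly the first asserted formula.

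For the dual formula I would invoke the anti-involution $\Delta$, which preserves canonical forms. Since $\Delta$ reverses a word and then applies $\iota\colon\sigma_k\mapsto\sigma_{n+1-k}$, the last block $[i_p,j_p]=\sigma_{i_p}\cdots\sigma_{j_p}$ of $w$ becomes the leading block $\sigma_{n+1-j_p}\cdots\sigma_{n+1-i_p}=[\,n+1-j_p,\ n+1-i_p\,]$ of $\Delta(w)$; this is the block-level refinement of the identity $S_n^{j}={}^{\,n+1-j}S_n$ already used in Corollary \ref{triangle}. Consequently $S_n^{i_p,j_p}={}^{\,n+1-j_p,\,n+1-i_p}S_n$, and feeding $a=n+1-j_p$, $b=n+1-i_p$ into the first formula, together with the symmetry $\binom{2n+1-i_p-j_p}{n+1-i_p}=\binom{2n+1-i_p-j_p}{n-j_p}$, produces the stated expression.

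The two binomial simplifications are routine; genuine care is needed in only two spots. First, one must check that the tail $v$ ranges over \emph{exactly} the FC elements of $W(A_{j_1-1})$ with bounded leftmost index --- both that $i_2<i_1$, $j_2<j_1$ are forced and that the leading block imposes no further constraint --- which follows from the same separation of blocks exploited in the slim/thick analysis. The main obstacle, and the step I would write out most carefully, is pinning down the precise block-level action of $\Delta$ on the \emph{last} pair $(i_p,j_p)$: one must confirm that reversal followed by $\iota$ sends a canonical form to a canonical form whose leading block is $[\,n+1-j_p,\,n+1-i_p\,]$, rather than merely permuting generators. This is guaranteed by the stated fact that $\Delta$ preserves canonical forms, but it is exactly what legitimises the reduction $S_n^{i_p,j_p}={}^{\,n+1-j_p,\,n+1-i_p}S_n$.
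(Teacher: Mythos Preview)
Your proof is correct and follows essentially the same route as the paper: both exploit the set-partition (\ref{part2}) to reduce ${}^{i_1,j_1}S_n$ to the Catalan-triangle numbers of Corollary~\ref{triangle}, and both obtain the dual formula via the anti-involution $\Delta$. Your clean identity ${}^{i_1,j_1}S_n={}^{\,i_1-1}S_{j_1}$ is exactly what the paper's difference relation ${}^{i_1,j_1}S_n-{}^{i_1-1,j_1}S_n={}^{\,i_1-1}S_{j_1-1}$ together with the base value ${}^{1,j_1}S_n=1$ telescope to, as you yourself note.
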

  
  \begin{remark}
  
  We notice that $^{i_1, j_1}S_n $ is independent from $n$,  since any  $j_1 \leq n$ is playing the role of $n$, this number counts the Dyck paths in a rectangle of dimensions $(i_1 +1) \times  (j_1 +1)$ (lattice paths under $ x=y$), this result can be found in (\cite{Sta15} -A2.(a)). The special case   $i_1 = j_1$ counts  the usual  
Dyck paths   from $(0,0)$ to $(i_1+1, i_1+1)$, indeed the formula gives the Catalan number 
$C_i$ since $2=2$...   \end{remark}

We count now $^{i} \overset{p}{S_{n}}$ (with $p \leq i$ forcibly, i.e. $^{i} \overset{p}{S_{n}}=0$ for $p>i$). Starting with the  set-partition (\ref{part2}) we get 

$$
^{i} \overset{p}{S_{n}} =  \     ^{i} \overset{p}{S_{n-1}} +  \sum_{k=0}^{k=i -1}  \    ^{k} \overset{p-1}{S_{n-1}}. 
$$

 \begin{corollary}
 \begin{equation}
^{i} \overset{p}{S_{n}} =   \frac{n+1-i}{n+1-p} \  \binom{i-1}{p-1} \ \binom{n}{p}, \text{ so}
\end{equation}
\begin{equation}
\overset{p}{S_{n}  ^{j}}  =   \frac{j}{n+1-p} \  \binom{n-j}{p-1}  \   \binom{n}{p}. 
\end{equation}

  \end{corollary}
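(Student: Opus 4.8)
The plan is to prove the first formula by induction on $n$, taking the recurrence displayed just before the statement, namely $^{i}\overset{p}{S_{n}} = {}^{i}\overset{p}{S_{n-1}} + \sum_{k=0}^{i-1} {}^{k}\overset{p-1}{S_{n-1}}$, as the engine: every term on the right carries the index $n-1$, so a single induction on $n$ (ranging over all $i$ and $p$ simultaneously) suffices. First I would dispose of the boundary and base data. The conjectured value returns $0$ exactly when $p>i$ or $p>n$, since then $\binom{i-1}{p-1}=0$ or $\binom{n}{p}=0$, matching the combinatorial vanishing of $^{i}\overset{p}{S_{n}}$ noted before the statement; the size-one count $^{i}\overset{1}{S_{n}}=n+1-i$ (the elements $[i,j]$ with $i\le j\le n$) and the case $n=1$ are immediate checks against the closed form. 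This grounds the induction.

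Write $T(n,i,p) = \frac{n+1-i}{n+1-p}\binom{i-1}{p-1}\binom{n}{p}$ for the conjectured value. The inductive step is then the purely algebraic identity $T(n,i,p) = T(n-1,i,p) + \sum_{k=0}^{i-1} T(n-1,k,p-1)$, where on the right one substitutes the formula supplied by the inductive hypothesis. The only non-formal ingredient is evaluating the sum. Pulling out the factor $\binom{n-1}{p-1}/(n-p+1)$, which is constant in $k$, the sum reduces to $\sum_{k=0}^{i-1}(n-k)\binom{k-1}{p-2}$; after the shift $m=k-1$ this becomes $\sum_{m=p-2}^{i-2}(n-1-m)\binom{m}{p-2}$.

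I then apply hockey-stick and absorption. The unweighted part gives $\sum_{m=p-2}^{i-2}\binom{m}{p-2}=\binom{i-1}{p-1}$, and the weighted part is handled through the absorption identity $(m+1)\binom{m}{p-2}=(p-1)\binom{m+1}{p-1}$, so that $(n-1-m)\binom{m}{p-2}=n\binom{m}{p-2}-(p-1)\binom{m+1}{p-1}$ and the sum telescopes to $n\binom{i-1}{p-1}-(p-1)\binom{i}{p}$. Using $\binom{i}{p}=\tfrac{i}{p}\binom{i-1}{p-1}$, $\binom{n-1}{p}=\tfrac{n-p}{n}\binom{n}{p}$ and $\binom{n-1}{p-1}=\tfrac{p}{n}\binom{n}{p}$, both sides collapse onto a common factor $\binom{i-1}{p-1}\binom{n}{p}$; the residual numerator simplifies to $n(n+1-i)$, recovering exactly $T(n,i,p)$ and closing the induction. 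I expect this weighted-binomial telescoping to be the main (though routine) obstacle, everything else being bookkeeping.

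Finally, the second formula follows from $\Delta$ with no further computation. The anti-involution $\Delta=\iota\circ(\cdot)^{-1}$ reverses a reduced word and sends $\sigma_k\mapsto\sigma_{n+1-k}$, so it is a size-preserving bijection of $A^c_n$ carrying the left-starting index $i_1=i$ to the right-ending index $j_p=n+1-i$. Hence $\overset{p}{S_{n}^{j}}={}^{\,n+1-j}\overset{p}{S_{n}}$, and substituting $i=n+1-j$ into the first formula gives $\overset{p}{S_{n}^{j}}=\frac{j}{n+1-p}\binom{n-j}{p-1}\binom{n}{p}$, as claimed.
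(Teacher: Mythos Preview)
Your proposal is correct and follows essentially the same approach as the paper: the paper derives the recurrence $^{i}\overset{p}{S_{n}} = {}^{i}\overset{p}{S_{n-1}} + \sum_{k=0}^{i-1} {}^{k}\overset{p-1}{S_{n-1}}$ from the set-partition~(\ref{part2}) and then states the Corollary, relying on the earlier remark that ``proving the following identity (and all the following partition identities for what it matters) is nothing but verifying easily the proposed hypothesis \dots\ against a fairly simple recurrence relation.'' You carry out precisely this guess-and-check verification in full, including the hockey-stick/absorption step the paper leaves implicit, and your derivation of the second formula via $\Delta$ matches the paper's treatment in Corollary~\ref{triangle}.
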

  
\begin{remark}
Essentially this two parameters number is a combination of the Narayana and Catalan triangle, clearly this number counts  FC elements of size $p$ starting with $i$, we can find it otherwise in \cite{Deutsch_1999} as the number of Dyck paths with $p$ peaks among which the $y$'s of the highest is $i$. We have $ \sum_{i=1}^{i=n}  (^{i} \overset{p}{S_{n}} ) =\sum_{i=p}^{i=n}  (^{i} \overset{p}{S_{n}} ) =  \C_n^p$ and  $\sum_{p=1}^{p=n}  (^{i} \overset{p}{S_{n}} )= \sum_{p=1}^{p=i}  (^{i} \overset{p}{S_{n}} )= \   ^{i}S_n$. And obviously  $\sum_{i=1}^{i=n}  \sum_{p=1}^{p=n}  (^{i} \overset{p}{S_{n}} )= C_{n+1}$. \\ 

   \end{remark}

Now consider $ ^{i}S_n^{j}$, this is the number of FC elements starting with $i$ and ending with $j$. The essential remark is the following: if $i\le j$, elements with $i_1=i$, $j_p=j$ and $p>1$ are thick (because 
for $1\le t \le p$, we have $j_t \ge j_p \ge i_1 \ge i_t$, and if $p>1$ at least  one inequality is strict). So we can apply the procedure in 
Lemma \ref{bijectionthick} and get $ ^{i}S_n^{j}= \  ^{i}S_{n-1}^{j-1}$ if $i<j$, 
and $ ^{i}S_n^{i}= \   ^{i}S_{n-1}^{i-1} +1$, so $$
 ^{i}S_n^{j} = \     ^{i}S_{n-(j-i+1)}^{i-1}  + 1   \text{ whenever   }  i \leq j. 
$$
If $i>j$ we imitate (\ref{part1}) and use again Lemma  \ref{bijectionthick} to get 
$$
 ^{i}S_n^{j} = \    ^{i}S_{n-1}^{j-1}  +  \sum_{k=j}^{k=i}  (^{i-k}S_{n-1-k} S_{k-1}^{j}) \text{ whenever   }  i>j.
$$
With the clear identities $^{1}S_n^{j} = 1 $ and $^{i}S_n^{1} = \   ^{i-1}S_{n-1}$. At this point it would not be a surprise that the two recurrences are to be unified, that is, 
writing  
$$
 ^{i}S_n^{j} = \    ^{i}S_{n-1}^{j} + \ \sum_{k=0}^{i-1}  \   ^{k}S_{n-1}^{j}  
$$
(either there is no $\sigma_n$, or the first block is $[i,n]$) and the analogous relation for 
$^{i-1}S_n^{j}$, we obtain: 
$$
 ^{i}S_n^{j} = \    ^{i-1}S_{n}^{j-1} + \    ^{i}S_{n-1}^{j}  .
$$
This relation plus the initial conditions determines $ ^{i}S_n^{j}$, so we can again guess and check, be it for a stable subset of $i$ and $j$. 

\begin{corollary}  We have $
 ^{i}S_n^{i-1} =  \binom{n}{i-1}-1
$ and, for \underline{$ i \leq j$}: 

$$
 ^{i}S_n^{j} =  \binom{n-j+i-1}{i-1}.
$$ 

\end{corollary}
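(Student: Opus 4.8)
The plan is to use the \emph{guess and check} method announced just before the statement. The recurrence $^{i}S_n^{j} = \ ^{i-1}S_n^{j} + \ ^{i}S_{n-1}^{j}$ (equivalently the decomposition $^{i}S_n^{j} = \ ^{i}S_{n-1}^{j} + \sum_{k=0}^{i-1} \ ^{k}S_{n-1}^{j}$) together with the initial conditions $^{1}S_n^{j} = 1$ and $^{i}S_n^{1} = \ ^{i-1}S_{n-1}$ determines the whole array $(^{i}S_n^{j})$ by induction. Hence it suffices to exhibit closed forms satisfying both the recurrence and the boundary data; uniqueness of the solution then forces equality. I would not attempt a single formula for all $(i,j)$: the statement only asks for the diagonal $j=i-1$ and the region $i\le j$, and the second reduces to the first.

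First I would observe that the $i\le j$ part follows from the diagonal part with essentially no work. Indeed, the reduction $^{i}S_n^{j} = \ ^{i}S_{n-(j-i+1)}^{i-1} + 1$ (valid for $i\le j$, established above) rewrites the $i\le j$ count in terms of the single value $^{i}S_{m}^{i-1}$ with $m = n-j+i-1$. So once $^{i}S_{m}^{i-1} = \binom{m}{i-1}-1$ is known, immediately $^{i}S_n^{j} = \bigl(\binom{n-j+i-1}{i-1} - 1\bigr) + 1 = \binom{n-j+i-1}{i-1}$, which is exactly the claimed formula. Thus the whole Corollary collapses to proving the diagonal identity $^{i}S_n^{i-1} = \binom{n}{i-1}-1$.

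For the diagonal, set $h(i,n) = \ ^{i}S_n^{i-1}$ and feed $j=i-1$ into the recurrence: $h(i,n) = \ ^{i-1}S_n^{i-1} + h(i,n-1)$. The middle term $^{i-1}S_n^{i-1}$ is an \emph{equality} case, to which the reduction applies once, giving $^{i-1}S_n^{i-1} = \ ^{i-1}S_{n-1}^{i-2} + 1 = h(i-1,n-1)+1$. This yields a self-contained recurrence
\[
h(i,n) = h(i-1,n-1) + h(i,n-1) + 1,
\]
and the proposed value $h(i,n)=\binom{n}{i-1}-1$ satisfies it by Pascal's rule, since $\bigl(\binom{n-1}{i-2}-1\bigr) + \bigl(\binom{n-1}{i-1}-1\bigr) + 1 = \binom{n}{i-1}-1$. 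An induction on $n$, anchored at the small base cases (for instance $h(i,i)=i-1$, checked directly, and the edge value $^{i}S_n^{1}=\ ^{i-1}S_{n-1}$ supplied by Corollary~\ref{triangle}), then closes the diagonal identity and with it the entire statement.

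The routine algebra (Pascal and a hockey-stick summation of the $\sum_{k=0}^{i-1} \ ^{k}S_{n-1}^{i-1}$ appearing in the decomposition) is harmless; the one place to be careful — and what I expect to be the main obstacle — is the bookkeeping at the boundaries. The diagonal only makes sense for $i\ge 2$; the identity contributes a degenerate $k=0$ term and a vanishing value $^{1}S_n^{0}=0$ that must be installed so that the recurrence $h(i,n)=h(i-1,n-1)+h(i,n-1)+1$ holds uniformly; and one must confirm that iterating the relation never pushes the index pair out of the region where the chosen formula is valid (in particular that the equality-case term is correctly read off as $\binom{n-1}{i-2}$ rather than from the $i>j$ regime). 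Pinning down these degenerate values, together with the origin of the constant $+1$, is the only genuinely delicate step.
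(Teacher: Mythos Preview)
Your proposal is correct and follows exactly the ``guess and check'' route the paper itself indicates: reduce the region $i\le j$ to the diagonal via $^{i}S_n^{j} = {}^{i}S_{n-(j-i+1)}^{i-1}+1$, then verify $\binom{n}{i-1}-1$ against the Pascal-type recurrence together with the boundary values. You in fact supply more detail than the paper, which simply asserts that the relation plus initial conditions determines the numbers; your derivation of the self-contained diagonal recurrence $h(i,n)=h(i-1,n-1)+h(i,n-1)+1$ and your explicit handling of the degenerate values $^{1}S_m^{0}=0$ are exactly what is needed to make that assertion rigorous (and your form $^{i}S_n^{j}={}^{i-1}S_n^{j}+{}^{i}S_{n-1}^{j}$, obtained by differencing the decomposition, is the one that actually checks against the closed form via Pascal).
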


\begin{remark}

This two parameters enumeration is a special one! It is not yet a partition since a closed formula for  $ ^{i}S_n^{j} $ could not emerge  when $j<i-1$! Yet it is to be highly considered (as for the other two parameters enumerations) because the second part of this work is dealing with the type $B_{n+1}$ in which every FC element is a multiple from the right of an $A^c_n$ element by a pure $B_{n+1}$ type element, so left and right parameters are crucial in the $B_{n+1}$ enumeration, hence in the blob algebra.

   \end{remark}

We would not stress more the various three, four or five parameters partitions, for example $ ^{i_{1}, j_1}  \overset{p}{S_{n}  }, \     \overset{p}{S_{n}  ^{i_p, j_{p}} }  $, we just point out that in the same way and using the  set-partition  (\ref{part2}) it is   easy to get them (guessing patterns as well as proving by induction).  \\


\section{"The" explicit bijection between non-crossing diagrams and FC elements} 

We produce below the only bijection between fully commutative elements in $W(A_n)$ and 
non-crossing diagrams on $n+1$ strings that is compatible with the product of monomials in the Temperley-Lieb algebra on the one hand and the product of diagrams by concatenation  on the other hand. We denote it by $\mathcal D$.  

The existence and uniqueness of $\mathcal D$ are recalled in subsection \ref{defdiag}
  below, where   we give the main definitions and fix some vocabulary. In  subsection \ref{components}, we state the essential Proposition describing the main properties of $\mathcal D(w)$, for $w \in W^c(A_n)$. 
In subsection \ref{componentsDw} we prove this Proposition with a series of Lemmas. 
In subsection \ref{thebijection} we state the Theorem describing the bijection $\mathcal D$ and detail the two algorithms corresponding to $\mathcal D$ and $\mathcal D^{-1}$. In subsection \ref{pending} we discuss some consequences and   pending questions.

\subsection{The definitions}\label{defdiag} 	
We recall briefly the definitions and results, by  now classical, about the Temperley-Lieb algebra viewed as a diagram algebra, as written in   	\cite{West} 	
and \cite{Graham_Lehrer_1996} to which we refer   for proofs and details.  
We let $R$ be a commutative ring with $1$ and we let $\delta$ be a fixed element in $R$. 
 
\subsubsection{The monomial basis}

The set  $A_{n}^c$ indexes a basis, called {\it monomial basis}, of the Temperley-Lieb algebra $TL(A_n)=TL_{R,\delta}(A_n)$ of type $A_n$ and parameter $\delta$ over $R$. Actually this algebra   has a presentation by generators $e_1$, ... , $e_n$ and  relations 
$$
e_i^2= \delta e_i, \quad   e_i e_{i\pm 1} e_i= e_i, \quad e_ie_j=e_je_i \text{ if } |i-j|>1, 
\quad  
\text{ for } 1 \le i,j, \le n.
$$
For $w  \in  A_{n}^c$ written in its canonical form 
(\ref{eq:Stembridge}): $w=
  [i_1, j_1]  [ i_2, j_2 ]  \dots   [i_p, j_p]$, 
we denote by $e_{w}$ the monomial 
$e_{i_1} \cdots e_{j_1}   \dots e_{i_p} \cdots e_{j_p}$; in particular our notation identifies $e_{\sigma_i}$ with $e_i$. 
Those monomials constitute the monomial $R$-basis of $TL(A_n)$, and as always $e_{w}$ doesn't depend 
on the reduced expression.

\subsubsection{Non-crossing diagrams}     

A {\it non-crossing diagram} $D$   on $n+1$ strings consists of two rows of $n+1$  dots
in which 
\begin{itemize}
\item each dot is joined to just one other dot and 
\item none of the joins intersect
when drawn in the rectangle defined by the two rows of $n+1$ dots.
\end{itemize}

We draw the two rows on horizontal lines and we let 
$A=\{ 1, 2, \cdots, n+1\}$ be the set of dots on the "top line", numbered from left to right, and   $B=\{ 1', 2', \cdots, (n+1)'\}$  be the set of dots on the "bottom line", numbered from left to right.    
We use the bijection from $A$ to $B$ given by $x \mapsto x'$, and we consider the total order on 
$A \cup B$ given by 
$$
1 < 2 < \cdots < n+1 < 1' < 2' < \cdots < (n+1)'. 
$$ 
With this order we can consider each join as a pair $(x,y)$ of dots 
such that $x<y$,  and consider $D$ as a set of such pairs, that we call {\it arrows}.  
 By convention the two rows of dots are drawn one below the other so that we call an arrow $(x,y)$ with $x \in A$ and  $y \in B$ {\it vertical} if 
$y=x'$, {\it positive} if $y > x'$ and {\it negative} if $y < x'$.

It is convenient to use the vocabulary of oriented arrows: an element $(x,y)$ of $D$ represents an  arrow $x \rightarrow y$  directed from $x$ to $y$, the vertices $x$ and $y$ are called the endpoints of the arrow, $x$ is the {\it tail} of the arrow and $y$ the 
{\it head} of the arrow.   We often shorten {\it non-crossing diagram} to  {\it diagram}.

\subsubsection{Multiplication of non-crossing diagrams}

We can concatenate two diagrams from top to bottom. We reproduce here the definition and the very example of \cite{Graham_Lehrer_1996}, with $5$ strings:  
\begin{figure}[h]\label{concatenation}
			\centering
			
\begin{tikzpicture}[scale=1]
\input{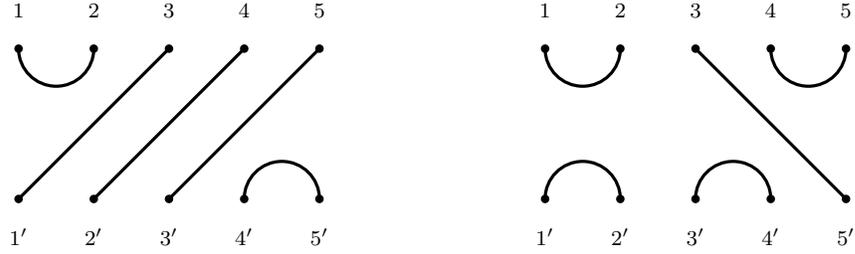}

\end{tikzpicture}

			\vspace{0.0cm}
			\caption{ $D_1$ (left),  $ \  D_2$ (right)} 
		 \end{figure}
\begin{figure}[h]   
			\centering
			
\begin{tikzpicture}[scale=1]
\input{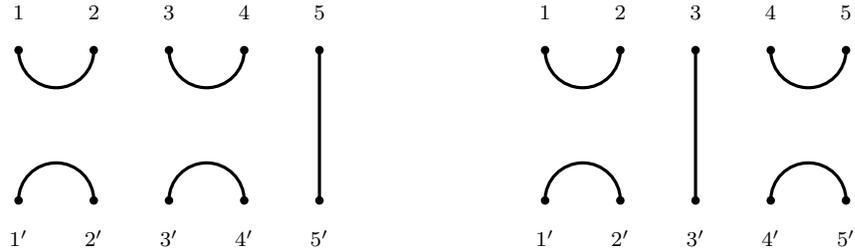}

\end{tikzpicture}

			\vspace{0.0cm}
			\caption{$D_1 \ast D_2$ (left), $\   D_2 \ast D_1$ (right)} 
		\end{figure}
if $D_1$ and $D_2$ are two diagrams, 
their concatenation  $D_1 \ast D_2$ is the diagram resulting from placing $D_2$ below $D_1$  and deleting the circles produced if any;  
then their product is 
$D_1 D_2 = \delta^{m(D_1,D_2)} D_1 \ast D_2$ where $m(D_1,D_2)$ is the number of deleted circles from the concatenation.

With this multiplication, the free $R$-module with basis the set of non-crossing diagrams on $n+1$ strings is given the structure of an $R$-algebra $\mathcal D_{R, \delta}(n+1)$, and the map sending $e_i$, 
$1 \le i \le n$,  to the diagram $$E_i=\{(i, i+1), (i', (i+1)')\}\cup \{(j, j')/ 1\le j \le i-1 \text{ or } i+2\le j \le n+1 \}$$ 
drawn below 

		\begin{figure}[h]
			\centering
			
\begin{tikzpicture}[scale=1]
\input{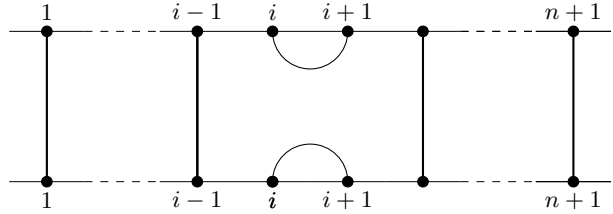}

\end{tikzpicture}

			\vspace{-0.0cm}
			\caption{Diagram $E_i$\label{DiagramFi}}
		\end{figure}
\noindent   
determines a unique isomorphism of $R$-algebras from $TL_{R, \delta}(A_n)$ to 
$\mathcal D_{R, \delta}(n+1)$. 
 This isomorphism   sends basis elements to  basis elements, hence determines the unique bijection
$\mathcal D$ between $A_n^c$, viewed as indexing the monomial basis,  and the non-crossing diagrams on $n+1$ strings, that  respects the product. We want to propose a direct, combinatorial description of this bijection $\mathcal D$. 

\subsubsection{Aside: another bijection between FC elements and  diagrams.}\label{cex} We propose now what should be $DF$ considering the terminology we adopted at the beginning. We explain why this bijection (and its dual) could not be "the" wanted one. First of all any $w$ in its canonical form is to be send via $FB$ to the ballot 
$$ \underbrace{+ \dots +} _{\text{($j_p$ copies)}} \underbrace{- \dots -} _{\text{($i_p$ copies)}}  \dots  \underbrace{+ \dots +} _{\text{($j_{1} - j_2 $ copies)}} \underbrace{- \dots -} _{\text{($i_{1} - i_2 $ copies)}} \underbrace{+ \dots +} _{\text{($n+1 - j_1 $ copies)}} \underbrace{- \dots -} _{\text{($n+1 - i_1 $ copies)}} ,$$ 
which can be seen clearly in Example \ref{ex5}, and since we are there we remark that  $FB ([4,5] [3,3] [1,1]) = +-++--++-+--,$ (its dual comes by reading the path from the top down to the bottom). 

Now every diagram can be sent to a ballot (see \cite{Sta15} ((59)) and ((61)) p.67) by sending every tail of every arrow to $+$ and every head to $-$ (its dual  comes from flipping the arrows). We see in Figure \ref{counterex} $NB (D_{[4,5] [3,3] [1,1]})$ and its dual.

\begin{figure}[h]
			\centering
			
\begin{tikzpicture}[scale=1]
\input{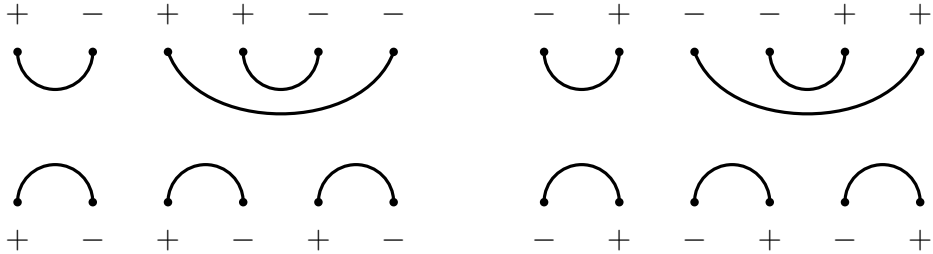}

\end{tikzpicture}

			\vspace{-0.5cm}
			\caption{Counterexample}\label{counterex}
		\end{figure}

Thus even if we take any dual of any of these bijections, we can never have  $ NF(D_{[4,5] [3,3] [1,1]})=[4,5] [3,3] [1,1]$, and our case is made.

\subsection{The main components of a diagram}\label{components}

Let $D$ be a diagram, we write $D$ as a disjoint union of four subsets: 
\begin{itemize}
\item $AD= \{(x,y) \in D \ / \  x,y \in A \}  \qquad$  (arrows in the top row), 
\item $BD= \{(x,y) \in D \ / \  x,y \in B \} \qquad$  (arrows in the bottom row), 
\item $D^+= \{(x,y) \in D \ / \  x\in A, y \in B,  x'<y \} \qquad$  (positive arrows from the top row to the bottom row), 
\item $D^-= \{(x,y) \in D \ / \  x\in A, y \in B, y\le x' \} \qquad$  (negative or vertical arrows from the top row to the bottom row).
\end{itemize}

We define: 
\begin{itemize}
\item $\mathcal T (AD)$ the set of tails of the arrows in $AD$, 

\item   
$\mathcal T (D^+)$ the set of tails of the arrows in $D^+$,   

\item    $\mathcal H (B D)$  the set of   heads  of the arrows in 
$BD$,  

\item  $\mathcal H (D^+)$  the set of   heads  of the arrows in  $D^+$;   

\item 
$ 
\mathcal I (D)= \{ i \in A \  / \  \exists y \in A \cup B \  (i,y) \in AD \cup D^+\}
= \mathcal T (AD)\cup  \mathcal T (D^+) $;

\item $\mathcal J (D)= \{ j  \in A \  / \  \exists x \in A \cup B \  (x, (j+1)') \in BD \cup D^+\}$ 

 $= \{ j  \in A \  / \ (j+1)' \in  \mathcal H (BD)\cup  \mathcal H (D^+) \} $ ; 
\item  
$P_D= \sharp (AD \cup D^+)$, called the {\it size} of $D$. 
 \end{itemize}

\begin{remarks}\label{product} {\rm 
\begin{enumerate}
\item $AD$ and $ BD$ have the same cardinality, hence 
$ 
\mathcal I (D)$ and $\mathcal J (D)$ also have the same cardinality. 
\item $D$ is fully determined by $AD$ and $ BD$, since there is a unique way to map elements of $A$ not in $AD$, to elements of $B$ not in $BD$, in a non-crossing way. 
(Indeed, after drawing $AD$ and $BD$,  we proceed from left to right and join at each step the leftmost free dot in $A$ to the leftmost free dot in $B$.) 
\item For arrows $(x,y)$ in $AD$ or $BD$, $x$ and $y$ have different parities. For arrows 
$(x,y)$ in $D^+ $ or $D^-$,  $x$ and $y$ have the same parity. 
\item 
It is straightforward to check that  $
AD \subset A(D\ast D') $ and $  BD'  \subset B(D\ast D')$.  \medskip
\end{enumerate}}
\end{remarks}

We need a definition before stating our main proposition. 
\begin{definition}
Let $(W,S)$ be a Coxeter system and let $w$ be an element of $W$. We say that 
$w$ is   {\it saturated} if 
$\text{ Supp} (w )= S$. 
\end{definition}

\begin{proposition}\label{describeDw}
Let $w \in W^c(A_n)$ be a fully commutative element, given in its canonical form (\ref{eq:Stembridge}): 
$$ 
w= [ i_1, j_1 ] [ i_2, j_2 ] \dots  [ i_p, j_p ]  , 
 \text{ with } 0\le p \le n  \text{ and } 
 \left\{ \begin{matrix}
 n\ge j_1 > \dots > j_p \ge 1 ,  \cr    n\ge i_1 > \dots > i_p \ge 1,   \cr
 j_t \ge i_t   \text{ for }  1\le t \le p.  
 \end{matrix}\right.$$
and let $\mathcal D(w)$ be the corresponding non-crossing diagram. Then 

\medskip

\begin{enumerate}
\item  $\mathcal I (\mathcal  D(w)) = \{ i_k / 1 \le k \le p \}$.  \medskip
 
\item  $\mathcal J (\mathcal  D(w) )= \{ j_k / 1 \le k \le p \}$.  \medskip

\item $\mathcal D(w)^+$ is the set of arrows $(i_s, (j_t +1)')$ where  $1 \le t < s \le p $ and 

\smallskip
\begin{itemize}

\item  $j_t = i_s + 2(s-t)-1$;

\item  for $t<r<s$ we have $j_r \ne i_s + 2(s-r)-1$
and $j_t \ne i_r + 2(r-t)-1$; \medskip

\item  $\Pi_{k=t+1} ^{k=s-1}   [ i_k, j_k ] $  is saturated in $ X_t ^{s} = <\sigma_{h}  \  /    i_{s} +1 \le h \le j_{t} -1>.$ \medskip
\end{itemize}

\smallskip
 
\item    We let $A(1)= \{i_1+1, i_1+2, \cdots, j_1+1  \}$, $f_1= \min A(1)=i_1+1$ and we define inductively on $r$, $1 < r \le p$,   the subsets:    \medskip

\begin{itemize}

\item if $ i_r \in \mathcal T (\mathcal D(w)^+)$:   $\   A(r)= \emptyset$;  \medskip

\item otherwise: 
$$ 
A(r)=   \{i_r+1, \cdots, j_1+1  \} - \{i_1, \cdots, i_{r-1}  \} 
- \{f_i  \ / 1\le i \le r-1, A(i) \ne \emptyset  \} .
$$

\end{itemize} 

If $A(r)$ is not empty, we let 
$f_r= \min A(r)$.   \medskip

Then $A\mathcal D(w)$ is the set of arrows $\{ (i_r, f_r) \ / \   1\le r \le p, 
A(r) \ne \emptyset  \} $.  \medskip 

\item We let $B(p)= \{ i_p,  i_p+1, \cdots, j_p \}$,  $g_p=\max B(p)=j_p$ and define inductively on $r$, $p> r \ge 1$,   the subsets:  

\begin{itemize}

\item if $ (j_r +1)' \in \mathcal H (\mathcal D(w)^+)$:   $\   B(r)= \emptyset$;  \medskip

\item otherwise: 
$$B(r)= \{ i_p,  i_p+1, \cdots, j_r \} - \{j_p+1, \cdots, j_{r+1}+1  \}- \{g_i  \ / p\ge i \ge r+1, 
B(i) \ne \emptyset  \}. $$

\end{itemize}

If $B(r)$ is not empty, we let 
$g_r= \max B(r)$.   
 
Then  $B \mathcal D(w)$  is the set of arrows $\{ (g'_r, (j_r+1)') \ / \   1\le r \le p, 
B(r) \ne \emptyset  \}$.  

\end{enumerate}
 
\end{proposition}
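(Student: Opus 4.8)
\emph{Setup and the one-block lemma.} The plan is to read off $\mathcal D(w)$ directly from the concatenation that defines it. Since the isomorphism $TL_{R,\delta}(A_n)\cong \mathcal D_{R,\delta}(n+1)$ sends $e_i$ to $E_i$ and $\mathcal D(w)$ is the diagram of the monomial $e_w=e_{i_1}\cdots e_{j_1}\cdots e_{i_p}\cdots e_{j_p}$, we have
$$\mathcal D(w)=E_{i_1}\ast\cdots\ast E_{j_1}\ast\cdots\ast E_{i_p}\ast\cdots\ast E_{j_p},$$
and because $e_w$ is a monomial-basis element no circle is deleted in this concatenation (a circle would force a factor $\delta$). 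The first step is a one-block lemma, proved by a short induction on $j-i$: the single-block diagram $D_{[i,j]}:=E_i\ast\cdots\ast E_j$ consists of the top arrow $(i,i+1)$, the bottom arrow $(j',(j+1)')$, the negative arrows $(k,(k-2)')$ for $i+2\le k\le j+1$, and vertical arrows elsewhere; in particular one block carries no positive arrow. Equivalently, I record the \emph{basic concatenation lemma}: stacking $D_{[i,j]}$ on top of a diagram $D'$ creates the top arrow $(i,i+1)$; feeds the top dots $i,\dots,j-1$ of $D'$ from the product-dots $i+2,\dots,j+1$ (a shift by $2$); glues together, via the surviving cup $(j',(j+1)')$, the two strands of $D'$ arriving at its top dots $j$ and $j+1$; and passes straight through outside $\{i,\dots,j+1\}$.

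\emph{The core: positive arrows (part 3).} I would then induct on the size $p$, peeling off the leftmost block and writing $\mathcal D(w)=D_{[i_1,j_1]}\ast\mathcal D(w')$ with $w'=[i_2,j_2]\cdots[i_p,j_p]$ (note $w'$ is supported in $\{\sigma_1,\dots,\sigma_{j_1-1}\}$). By Remarks \ref{product}(4) the top arrow $(i_1,i_1+1)$ survives upward and $B\mathcal D(w')$ survives downward; combined with the basic concatenation lemma this tracks every arrow of $\mathcal D(w')$ through the gluing. A positive arrow of $\mathcal D(w)$ is either \emph{inherited} (after the shift) from a positive arrow of $\mathcal D(w')$, in which case $t\ge 2$ and it is governed by the induction hypothesis, or \emph{newly created} with $t=1$, when the cup $(j_1',(j_1+1)')$ catches two strands that together run from a top dot $i_s$ down to the bottom dot $(j_1+1)'$. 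Unwinding the shift-by-$2$ through the intermediate blocks $1<r<s$ shows such a connection exists exactly when the alignment condition $j_1=i_s+2(s-1)-1$ holds and $\prod_{k=2}^{s-1}[i_k,j_k]$ is saturated in $X_1^{s}=\langle \sigma_h\ /\ i_s+1\le h\le j_1-1\rangle$, the saturation guaranteeing an unbroken staircase of caps and cups and the two ``$\ne$'' clauses guaranteeing that the strand is not intercepted by a closer block. I expect this bookkeeping — matching each positive arrow to a maximal unbroken staircase and verifying both existence and non-interception — to be the main obstacle, and I would organize it as the announced series of lemmas in subsection \ref{componentsDw}, treating existence and uniqueness of the staircase separately.

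\emph{Consequences: parts (1), (2), (4), (5).} Once part (3) is in hand, parts (1) and (2) are immediate: a top dot is a tail of a top or positive arrow precisely when it equals some $i_k$, giving $\mathcal I(\mathcal D(w))=\{i_k\}$, and the statement for $\mathcal J$ and the heads is the mirror image. For parts (4) and (5) I would invoke the anti-involution $\Delta$ of Section \ref{notations}, which preserves canonical forms and realizes on diagrams the top–bottom flip composed with the reflection $\iota$; it interchanges $AD\leftrightarrow BD$ and $\{i_k\}\leftrightarrow\{j_k\}$, so part (5) follows from part (4) by symmetry and only part (4) need be proved. For part (4), with $\mathcal T(\mathcal D(w)^+)$ known from (3), every remaining $i_r$ is the tail of a genuine top arrow, and by Remarks \ref{product}(2) the non-crossing completion joins leftmost free dot to leftmost free dot; translating ``leftmost head still available to $i_r$'' into the recursively defined set $A(r)$ — the window $\{i_r+1,\dots,j_1+1\}$ with the already-used tails $\{i_1,\dots,i_{r-1}\}$ and already-assigned heads $\{f_i\}$ removed — and setting $f_r=\min A(r)$ is exactly this greedy matching, so an induction on $r$ identifies $A\mathcal D(w)$ with $\{(i_r,f_r)\}$. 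Since $AD$ and $BD$ determine the whole diagram, parts (3)–(5) together account for $\mathcal D(w)$, completing all five statements.
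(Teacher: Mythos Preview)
Your plan differs from the paper's in two ways worth flagging. The paper inducts by appending blocks on the \emph{right}, $\mathcal D_s=\mathcal D_{s-1}\ast D_{[i_s,j_s]}$, and first proves a suite of Lemmas (\ref{LemmeAD}--\ref{LemmeBD}) tracking how each of $AD,\,D^+,\,D^-,\,BD$ evolves under bottom-concatenation; from these it derives (1)--(2), then (3), then (4) and (5) separately. You instead peel from the \emph{left}, and you invoke the anti-involution $\Delta$ to reduce (5) to (4). The $\Delta$ shortcut is legitimate and the paper does not use it.

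Left-peeling is dual to the paper's scheme, but it shifts which part is hard, and this is where your proposed ordering breaks. In the paper's direction one has $A\mathcal D_{s-1}\subseteq A\mathcal D_s$ and $\mathcal D_{s-1}^+\subseteq\mathcal D_s^+$; the only new arrow at step $s$ has tail $i_s$, and whether it lands in $A\mathcal D_s$ or $\mathcal D_s^+$ is governed by the \emph{bottom} row of $\mathcal D_{s-1}$ (condition~\eqref{conditionDplus}), which the Lemmas compute independently of (4) and (5). In your direction the monotone side is the bottom: the possibly-new positive arrow has head $(j_1+1)'$, and it appears exactly when the top dot $j_1$ of $\mathcal D(w')$ is the head of some arc $(i_s,j_1)\in A\mathcal D(w')$. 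Deciding this --- and identifying $s$ --- requires knowing $A\mathcal D(w')$, i.e.\ assertion~(4) for $w'$, not just $\mathcal D(w')^+$. So in a genuine left-peel induction on $p$, (3) and (4) must be carried together in the hypothesis; your order ``prove (3) first, then read off (4)'' is circular as stated. Your phrase ``unwinding the shift-by-$2$ through the intermediate blocks $1<r<s$'' is really a direct multi-layer strand trace rather than a single left-peel step, and done carefully it reproduces the paper's iterated Lemmas from the other end --- which is fine, but then the induction on $p$ is no longer doing the work you claim for it.

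A smaller gap: your argument for (4) cites Remarks~\ref{product}(2), but that remark says $AD$ and $BD$ together determine $D^\pm$, not that $\mathcal T(AD)$ determines $AD$. What you actually need is the nesting property of non-crossing diagrams --- every top dot enclosed by a top arc must itself lie on a top arc, else a through-strand would cross it --- and \emph{that} forces the greedy match $f_r=\min A(r)$. It is a one-line lemma, but it is not Remark~\ref{product}(2).
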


The following subsection is devoted to the proof of this Proposition. 

\subsection{The components of $\mathcal D(w)$}\label{componentsDw}

Let $w$ be as in Proposition \ref{describeDw}. We keep the notation in the Proposition. 

\subsubsection{Implementation of the induction on $p$}

Certainly   
$\mathcal D(1)$ is the identity diagram with arrows joining $x$ to $x'$ for  $1 \le x \le n+1$.   For $p=1$ we have $j_1\ge i_1$ and 
$w=\sigma_{i_1}  \dots \sigma_{j_1}$.  It is straightforward to see that the diagram below  is indeed the concatenation, from top to bottom, of the diagrams  
$E_{i_1}, E_{i_1+1}, \cdots,  E_{j_1}$ (Figure \ref{DiagramFi})  representing $\sigma_{i_1}, \sigma_{i_1+1}, \cdots, \sigma_{j_1}$.

		\begin{figure}[ht]
			\centering
			
\begin{tikzpicture}[scale=1]
\input{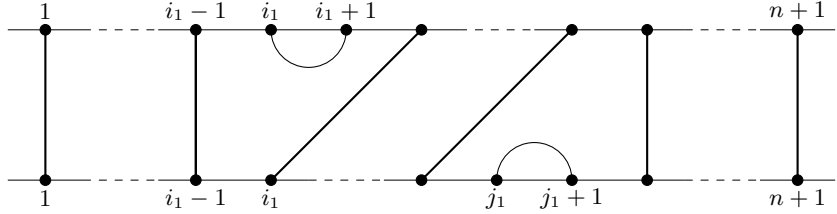}

\end{tikzpicture}

			\vspace{-0.0cm}
			\caption{$\mathcal  D( [ i_1, j_1 ]) $}\label{diagi1j1}
		\end{figure}  

It satisfies  $\mathcal I({\mathcal D( [ i_1, j_1 ])})= \{ i_1 \}$,  $ \mathcal J({\mathcal D( [ i_1, j_1 ])})= \{ j_1 \}$, $\mathcal D( [ i_1, j_1 ])^+= \emptyset$, as prescribed in  Proposition 
\ref{describeDw}.

\bigskip 
We let $\mathcal D= \mathcal D_p =\mathcal D(w)$ and, for $1\le s \le p$, 
we  let $w_s=  [ i_1, j_1 ]  \dots  [ i_s, j_s ]$ and 
$ \mathcal D_s =\mathcal D(w_s)$. For $1<s  \le p$ we have 
$w_{s}= w_{s-1} [ i_{s}, j_{s} ]$ hence 
 the diagram 
$\mathcal D_{s}$ is the concatenation  $\mathcal D_{s-1}\ast \mathcal D( [ i_{s}, j_{s} ])$ as illustrated below in Figure 
\ref{Concatenationisjs}. 

\medskip

\begin{figure}[h]
			\centering
			
\begin{tikzpicture}[scale=1]
\input{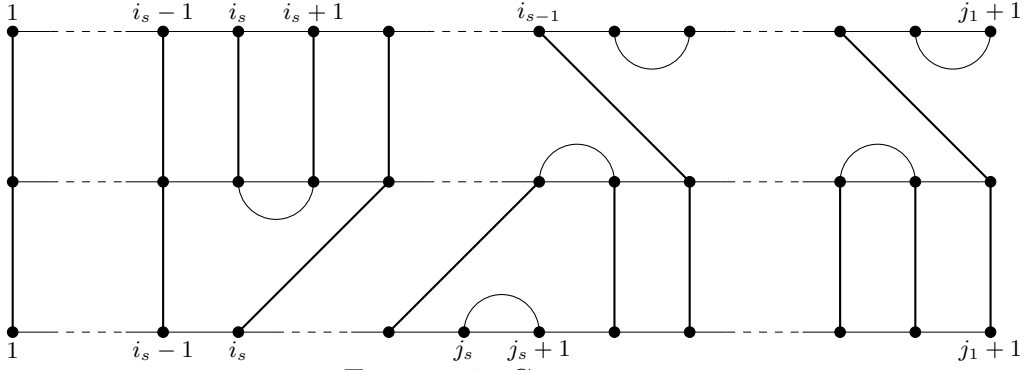}

\end{tikzpicture}

			\vspace{-0.5cm}
			\caption{Concatenation}
\label{Concatenationisjs}
		\end{figure}

\subsubsection{A series of Lemmas}

As remarked earlier in   Remark \ref{product} (4), we have: 
\begin{lemma}\label{LemmeAD}
$A\mathcal D_{s-1}  \subseteq  A\mathcal D_{s}$ and $(j_{s}', (j_{s}+1)')$ belongs to $B\mathcal D_{s}$.
\end{lemma}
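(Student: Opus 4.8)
The plan is to prove Lemma \ref{LemmeAD} by unwinding the definitions of the concatenation product and the main components $AD$ and $BD$, using the already-established Remark \ref{product} (4). The statement has two parts: first, that $A\mathcal{D}_{s-1} \subseteq A\mathcal{D}_s$, and second, that the specific arrow $(j_s', (j_s+1)')$ lies in $B\mathcal{D}_s$. Since by construction $\mathcal{D}_s = \mathcal{D}_{s-1} \ast \mathcal{D}([i_s,j_s])$, the first inclusion is an immediate instance of Remark \ref{product} (4), which asserts that $AD \subseteq A(D \ast D')$ for any two diagrams $D, D'$; I would simply invoke this with $D = \mathcal{D}_{s-1}$ and $D' = \mathcal{D}([i_s,j_s])$.

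For the second part I would examine the bottom-row structure of the factor diagram $\mathcal{D}([i_s,j_s])$, which is the diagram depicted in Figure \ref{diagi1j1} (with subscript $s$ in place of $1$). First I would locate, in that single-block diagram, a bottom-row arrow joining dots $j_s'$ and $(j_s+1)'$: by the explicit description of $\mathcal{D}([i,j])$ as the concatenation of the generator diagrams $E_i, \dots, E_j$, the bottom line of this block carries exactly such a horizontal arrow at positions $j_s$ and $j_s+1$. Hence $(j_s', (j_s+1)') \in B\mathcal{D}([i_s,j_s])$. Then, by the symmetric half of Remark \ref{product} (4), namely $BD' \subseteq B(D \ast D')$, this bottom arrow of the lower factor survives unchanged into the concatenation, so it belongs to $B\mathcal{D}_s$.

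The step requiring the most care is verifying that the arrow $(j_s', (j_s+1)')$ really is present in the bottom row of $\mathcal{D}([i_s,j_s])$ and that it is undisturbed by the concatenation. The presence in the block diagram follows from reading off Figure \ref{diagi1j1}: the lower-right portion of $\mathcal{D}([i,j])$ consists of a short horizontal strand linking the $j$-th and $(j+1)$-th bottom dots, which is exactly the claimed negative-block cap at the base of the staircase. The undisturbed survival under concatenation is guaranteed because Remark \ref{product} (4) already records $BD' \subseteq B(D \ast D')$: the bottom-row arrows of the \emph{lower} diagram in a concatenation are never merged or deleted, since deletion only affects closed circles formed in the interior. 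I would therefore treat this as the only point deserving explicit justification, the two inclusions themselves being direct citations of the already-proved remark.
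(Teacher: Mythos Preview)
Your proof is correct and follows the same approach as the paper, which simply cites Remark \ref{product} (4) for both inclusions. Your additional unpacking---locating $(j_s',(j_s+1)')$ in $B\mathcal D([i_s,j_s])$ via Figure \ref{diagi1j1} and then applying the $BD' \subseteq B(D\ast D')$ half of the remark---is exactly the justification that the paper leaves implicit.
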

Next we examine $\mathcal D_{s}^+$.
\begin{lemma}\label{LemmeDplus}
  $ \mathcal D_{s-1}^+  \subseteq   \mathcal D_{s}^+$.
\end{lemma}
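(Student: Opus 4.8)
The plan is to prove the inclusion by induction on $s$ (this Lemma being one step of the overall induction that establishes Proposition \ref{describeDw}), tracking what the concatenation $\mathcal D_{s}=\mathcal D_{s-1}\ast \mathcal D([i_s,j_s])$ does to an individual positive arrow of $\mathcal D_{s-1}$. Recall that in this concatenation the bottom row of $\mathcal D_{s-1}$ is glued to the top row of $\mathcal D([i_s,j_s])$, so a positive arrow $(x,m')$ of $\mathcal D_{s-1}$ (with $x<m$) ends at the interior dot obtained by identifying $m'$, on the bottom of $\mathcal D_{s-1}$, with $m$, on the top of $\mathcal D([i_s,j_s])$. To follow the arrow downward I first recall the explicit shape of $\mathcal D([i_s,j_s])$ established in the base case (Figure \ref{diagi1j1}): its top row carries the cap $(i_s,i_s+1)$; the dots $i_s+2,\dots,j_s+1$ are tails of the negative arrows sending $m\mapsto (m-2)'$; the dots $m<i_s$ and $m>j_s+1$ are tops of vertical strands $m\to m'$; and the unique bottom cup is $(j_s',(j_s+1)')$.

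Second, I would localise the head $m'$ of an arbitrary positive arrow of $\mathcal D_{s-1}$. By the inductive description of $\mathcal D_{s-1}=\mathcal D(w_{s-1})$, available as the induction hypothesis, the heads of the arrows in $\mathcal D_{s-1}^+$ are exactly the dots $(j_t+1)'$ with $1\le t\le s-1$. Since $j_1>\dots>j_{s-1}>j_s$ are integers, each such head satisfies $j_t+1\ge j_{s-1}+1\ge j_s+2>j_s+1$; in particular the gluing dot $m=j_t+1$ lies strictly to the right of $j_s+1$, hence in the vertical region of $\mathcal D([i_s,j_s])$. Dually one checks that it can hit neither the cap (as $j_t+1>i_s+1$, using $j_s\ge i_s$) nor a negative tail (as $j_t+1>j_s+1$), so the gluing is forced onto an untouched vertical strand.

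Third, because $m>j_s+1$, the dot $m$ of $\mathcal D([i_s,j_s])$ is the top of the vertical strand $m\to m'$. Concatenating, the composite path $x\to m'\to m'$ joins the top dot $x$ of $\mathcal D_s$ to the bottom dot $m'$ of $\mathcal D_s$ without producing a closed loop, so $(x,m')$ is an arrow of $\mathcal D_s$; as $x$ and $m$ are unchanged and $x<m$, it is again positive, i.e. $(x,m')\in\mathcal D_s^+$. This yields $\mathcal D_{s-1}^+\subseteq\mathcal D_s^+$.

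The main obstacle is precisely the localisation step of the second paragraph: the whole argument rests on knowing that the head of every positive arrow of $\mathcal D_{s-1}$ sits strictly to the right of $j_s+1$, so that under the gluing it meets an untouched vertical strand rather than the cap or a negative tail of $\mathcal D([i_s,j_s])$, either of which could shorten the arrow, merge it with another, or destroy positivity. I expect this to come out cleanly here only because the $j_t$ are strictly decreasing; the substantive content, namely that these heads are \emph{exactly} the $(j_t+1)'$, is what the ambient induction for Proposition \ref{describeDw} supplies, so the care needed is to keep the induction non-circular by invoking that description solely for the already-treated factor $w_{s-1}$.
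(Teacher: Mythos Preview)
Your proof is correct and follows essentially the same route as the paper: take a positive arrow $(x,m')\in\mathcal D_{s-1}^+$, use the inductive description of $\mathcal D_{s-1}$ to identify its head as some $(j_t+1)'$ with $t\le s-1$, observe $j_t+1>j_s+1$ so the gluing dot lies in the vertical region of $\mathcal D([i_s,j_s])$, and conclude the arrow survives unchanged. The paper adds the remark that the tail satisfies $x\ge i_{s-1}>i_s$ (using that dots left of $i_{s-1}$ carry vertical arrows in $\mathcal D_{s-1}^-$), but this is not needed for the inclusion itself, and your omission of it is harmless.
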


\begin{proof}
Let $(x, y') \in \mathcal D_{s-1}^+$, so  $x,y \in A$, $x < y$. 
We note that   $ \mathcal D_{s-1}^-$ contains the arrows $(z, z')$  for $z < i_{s-1}$, so  $x$   satisfies $x \ge i_{s-1} > i_{s}$. Furthermore $y$ is a $j_t+1$ for some $t\le s-1$ so 
$y> j_{s}+1$, hence, when concatenating,  the arrow is followed downwards by a vertical arrow and  indeed we keep $(x, y') \in \mathcal D_{s}^+$.  
\end{proof}

\begin{lemma}\label{LemmeDmoins}
Let $(x, y') \in \mathcal D_{s-1}^-$, so  $x,y \in A$, $x \ge y$. Then 
\begin{enumerate}
\item If $y < i_{s}$ or $y > j_{s}+1$, we have $(x, y') \in \mathcal D_{s}^-$.
\item  If  $i_{s}+1 <y \le  j_{s}+1$, we have $(x, (y-2)') \in \mathcal D_{s}^-$.
\item  If $y=  i_{s}+1$ we obtain $(i_{s}, x) \in A\mathcal D_{s}$.  
\item If $y=  i_{s}$  we obtain,   
\begin{enumerate}
\item if $ i_{s} +1< i_{s-1}$:  $(i_{s}, i_{s}+1) \in   A\mathcal D_{s}$;
\item if $ i_{s}+1=i_{s-1} = j_{s-1}$:  $( i_{s}, ( i_{s}+2)') \in \mathcal D_{s}^+ $;
\item if $ i_{s}+1=i_{s-1} < j_{s-1}$: 
\begin{enumerate}
\item 
either  there is a $t<s$ such that $(i_{s-1}', (j_t+1)')$ 
 belongs to
$  B\mathcal D_{s-1}$, then 
$( i_{s}, (j_t+1)') $ belongs to $ \mathcal D_{s}^+ $;   

\item   or  there is an $a>i_{s-1}$ such that $(a, i_{s-1}')$  belongs to  $  \mathcal D_{s-1}^-$, then 
$( i_{s}, a) $ belongs to $  A\mathcal D_{s}$.
\end{enumerate}
\end{enumerate}
\end{enumerate}
\end{lemma}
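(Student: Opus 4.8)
The plan is to read both sides of each assertion off the concatenated diagram $\mathcal{D}_s = \mathcal{D}_{s-1} \ast \mathcal{D}([i_s,j_s])$, using the explicit shape of the one--block diagram established in the base case (Figure \ref{diagi1j1}). Writing $i=i_s$, $j=j_s$, that block consists of the top arc $(i,i+1)$, the bottom arc $(j',(j+1)')$, the negative arrows $(k,(k-2)')$ for $i+2\le k\le j+1$, and the verticals $(k,k')$ for $k<i$ or $k>j+1$. Let $M_k$ denote the middle dot produced when the bottom dot $k'$ of $\mathcal{D}_{s-1}$ is glued to the top dot $k$ of the block. Every claim then amounts to starting at the given arrow $(x,y')$, whose lower endpoint lands on $M_y$, and following the component of $M_y$ through the block (and, when forced, back up into $\mathcal{D}_{s-1}$) until it exits at a boundary dot.

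Before splitting into cases I would record the one structural fact that governs everything on the left: since $\mathrm{Supp}(w_{s-1})\subseteq\{\sigma_{i_{s-1}},\dots,\sigma_{j_1}\}$ and $i_s<i_{s-1}$, every generator occurring in $w_{s-1}$ fixes the first $i_{s-1}-1$ strands, so $\mathcal{D}_{s-1}$ contains the verticals $(h,h')$ for all $1\le h\le i_{s-1}-1$; in particular $(i_s,i_s')$ is vertical. The five positions of $y$ now match the five branches. If $y<i_s$ or $y>j+1$ the block acts at $M_y$ by a vertical and the arrow survives as $(x,y')$, which is branch (1); if $i_s+2\le y\le j+1$ the block's negative arrow $(y,(y-2)')$ shifts it to $(x,(y-2)')$, which is branch (2). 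If $y=i_s+1$ the block's top arc joins $M_{i_s+1}$ to $M_{i_s}$, and $M_{i_s}$ climbs back up through the vertical $(i_s,i_s')$ to the top dot $i_s$, producing the top--row arrow $(i_s,x)$, which is branch (3).

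The case $y=i_s$ is the crux and is where I expect the real work. Because $(i_s,i_s')$ is vertical, the hypothesis $(x,i_s')\in\mathcal{D}_{s-1}^-$ forces $x=i_s$, so the component leaves the block's top arc at $M_{i_s+1}$ and must re-enter $\mathcal{D}_{s-1}$ through the dot $(i_s+1)'$; the outcome is therefore dictated entirely by what $(i_s+1)'$ is attached to in $\mathcal{D}_{s-1}$, which is exactly the trichotomy (a)/(b)/(c). If $i_s+1<i_{s-1}$ then $(i_s+1,(i_s+1)')$ is one of the leftmost verticals above, and the path closes up into $(i_s,i_s+1)\in A\mathcal{D}_s$, giving (a). If $i_s+1=i_{s-1}$, the canonical inequality $j_s<j_{s-1}$ is decisive: when $j_{s-1}=i_{s-1}$ it forces $j_s\le i_s$, hence $j_s=i_s$ and the current block is the single generator $E_{i_s}$; then $(i_s+1)'=i_{s-1}'$ is the tail of the surviving bottom arc $(i_{s-1}',(i_s+2)')$ of $\mathcal{D}_{s-1}$ (Remark \ref{product}(4)), the path runs down to the block's vertical at $i_s+2$, and we obtain the positive arrow $(i_s,(i_s+2)')$, giving (b). Finally, if $i_s+1=i_{s-1}<j_{s-1}$, the dot $i_{s-1}'$ is matched in $\mathcal{D}_{s-1}$ either to another bottom dot $(j_t+1)'$ or to a top dot $a$; here the leftmost--verticals fact rules out $a<i_{s-1}$, so any such $a$ satisfies $a>i_{s-1}$ and the match is negative, which is precisely the dichotomy (c)(i)/(c)(ii). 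Tracing the block once more---a vertical at $j_t+1$ in the first sub-case, nothing further in the second---yields $(i_s,(j_t+1)')\in\mathcal{D}_s^+$, respectively $(i_s,a)\in A\mathcal{D}_s$.

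The genuinely delicate points, and the ones I would check most carefully, are two: that the strictly decreasing sequence $j_1>\dots>j_p$ collapses case (b) to a single-generator block (so that no stray negative arrow of the block intervenes before the path reaches $(i_s+2)'$), and that the leftward verticals exclude a spurious positive match at $i_{s-1}'$ in case (c). Everything else is routine path-following in the concatenation.
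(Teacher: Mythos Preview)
Your argument is correct and follows essentially the same approach as the paper: both proofs trace the given arrow through the concatenation $\mathcal D_{s-1}\ast \mathcal D([i_s,j_s])$, using the explicit shape of the one-block diagram and the fact that the dots left of $i_{s-1}$ are vertical in $\mathcal D_{s-1}$. Your write-up simply makes explicit several points the paper leaves to the figure (e.g.\ why $x=y=i_s$ in case~(4), why $j_s=i_s$ in~(4b), and why the spurious cases at $i_{s-1}'$ are ruled out in~(4c)).
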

\begin{proof} 
The first three statements are immediate from the figure describing the concatenation. We consider the fourth, where we must have $x=y= i_{s}$, we compose with the arrow $(i_{s}, i_{s}+1)$ in $\mathcal D( [ i_{s}, j_{s} ])$  followed by the arrow $\mathcal A$ in $\mathcal D_{s-1}$ containing 
$(i_{s}+1)'$. 
\begin{itemize}
\item  If $ i_{s}+1 < i_{s-1}$,  $\mathcal A = (i_{s}+1,( i_{s}+1)')$ producing 
$(i_{s}, i_{s}+1) $ in $  A\mathcal D_{s}$.

\item     If  $ i_{s}+1=i_{s-1} = j_{s-1}$, then $\mathcal A = (i_{s-1}',( i_{s-1}+1)')$ belongs to 
$  B\mathcal D_{s-1}$  and since $j_{s-1} > j_{s}$ this produces 
$( i_{s}, ( i_{s}+2)') $. 

\item     If  $ i_{s}+1=i_{s-1} < j_{s-1}$, then $i_{s-1}'$ is either the tail of  
$\mathcal A =(i_{s-1}', (j_t+1)')$ 
 in 
$  B\mathcal D_{s-1}$  or the head of   $\mathcal A =(a, i_{s-1}')$  in  $  \mathcal D_{s-1}^-$.
\end{itemize}
\end{proof}

\begin{remark}
 Case (4b) in the previous Lemma is a special case of (4c): indeed,   if $i_{s-1} = j_{s-1}$, the arrow  $(i_{s-1}', (j_{s-1}+1)')$ 
 belongs to
$  B\mathcal D_{s-1}$.
\end{remark}

\begin{lemma}\label{LemmeBD}
Let $(x', y') \in B\mathcal D_{s-1}$, so  $x,y \in A$, $x < y$. Then $y=j_t+1 > j_{s}+1$ for some $t\le s-1$ and $x \ge i_{s-1} >i_{s}$, and :   
\begin{enumerate}
\item if $x   > j_{s}+1$ then $(x', y') \in B\mathcal D_{s}$; 
\item if $i_{s}+1 < x   \le  j_{s}+1$ then  $((x-2)', y') \in B\mathcal D_{s}$; 
\item if $x=  i_{s}+1$, which implies $i_{s-1}=  i_{s}+1$,     then $(i_{s}, y')   \in \mathcal D_{s}^+$. 
\end{enumerate}
\end{lemma}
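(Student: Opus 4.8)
The plan is to prove the Lemma by chasing the single bottom arrow $(x',y')$ of $\mathcal D_{s-1}$ through the concatenation $\mathcal D_s=\mathcal D_{s-1}\ast \mathcal D([i_s,j_s])$, exactly in the spirit of the proofs of Lemmas \ref{LemmeDplus} and \ref{LemmeDmoins}. First I would record the shape of the bottom factor $\mathcal D([i_s,j_s])$, read off from the base case (Figure \ref{diagi1j1}): its top row carries the single cup $(i_s,i_s+1)$, its bottom row the single cap $(j_s',(j_s+1)')$, each top dot $a$ with $i_s+2\le a\le j_s+1$ is joined to the bottom dot $(a-2)'$ by a negative ``staircase'' arrow, and every remaining dot $k$ (those with $k<i_s$ or $k>j_s+1$) is vertical. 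In the concatenation the row carrying $x'$ and $y'$ becomes the glued middle row, so the whole problem reduces to seeing how its two endpoints, now viewed as top dots of $\mathcal D([i_s,j_s])$, continue downward.

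For the preamble I would invoke the inductive description of $B\mathcal D_{s-1}$ (Proposition \ref{describeDw} for $w_{s-1}$, the induction hypothesis): every head of a bottom arrow has the form $(j_t+1)'$ with $t\le s-1$, so $y=j_t+1$, and $y>j_s+1$ since $j_t\ge j_{s-1}>j_s$; every tail lies in $\{i_{s-1},\dots\}$, so $x\ge i_{s-1}>i_s$. Because $y>j_s+1$, the top dot $y$ of $\mathcal D([i_s,j_s])$ is vertical, so the head $y'$ always descends unchanged to the bottom row of $\mathcal D_s$. Hence the three cases differ only through the value of the tail $x$, which satisfies $x\ge i_{s-1}>i_s$, i.e. $x\ge i_s+1$.

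The trace of the tail then splits along the value of $x$. If $x>j_s+1$, the top dot $x$ is vertical, so $(x',y')$ survives intact in $B\mathcal D_s$, which is (1). If $i_s+1<x\le j_s+1$, the top dot $x$ is the foot of the staircase arrow $(x,(x-2)')$, which drags the endpoint down to $(x-2)'$; combined with the vertical descent of $y$ this produces the bottom arrow $((x-2)',y')\in B\mathcal D_s$ (note $x-2<x\le j_s+1<y$, so $x-2<y$), which is (2).

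The only delicate case is $x=i_s+1$, and this is where I expect the main obstacle: $x\ge i_{s-1}>i_s$ together with $x=i_s+1$ forces $i_{s-1}=i_s+1=x$, and $x$ is now the right foot of the cup $(i_s,i_s+1)$, so the path no longer stays in the bottom factor but climbs back up into $\mathcal D_{s-1}$. The resolution is the same structural fact used in Lemma \ref{LemmeDplus}: since $i_s<i_{s-1}$, the vertical $(i_s,i_s')$ is one of the arrows that $\mathcal D_{s-1}^-$ is known to contain for all $z<i_{s-1}$, so the middle dot $i_s$ climbs to the top dot $i_s$ of $\mathcal D_s$. Concatenating this vertical $(i_s,i_s')$ of $\mathcal D_{s-1}$, the cup $(i_s,i_s+1)$ of $\mathcal D([i_s,j_s])$, the now-internal arrow $(x',y')$, and the vertical descent of $y$, the resulting single strand joins the top dot $i_s$ to the bottom dot $y'$, i.e. gives the arrow $(i_s,y')$; and $y=j_t+1>j_s+1>i_s$ shows $y>i_s$, so the arrow is positive, whence $(i_s,y')\in\mathcal D_s^+$, which is (3). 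Throughout, the only real discipline is to keep straight which row each label refers to (top of $\mathcal D_s$, glued middle, or bottom of $\mathcal D_s$) and to respect the index shift by $2$ imposed by the staircase; once the shape of $\mathcal D([i_s,j_s])$ is fixed, every case is a short path-chase.
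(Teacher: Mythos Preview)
Your proof is correct and is exactly the arrow-chase through the concatenation $\mathcal D_{s-1}\ast\mathcal D([i_s,j_s])$ that the paper has in mind; the paper's own proof is simply ``Clear.'' One minor remark: for the preamble $x\ge i_{s-1}$ you need not invoke the full induction hypothesis from Proposition \ref{describeDw}; the observation already recorded in the proof of Lemma \ref{LemmeDplus}, that $\mathcal D_{s-1}^-$ contains the verticals $(z,z')$ for every $z<i_{s-1}$, immediately forces any endpoint of a bottom arrow to sit at index $\ge i_{s-1}$.
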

\begin{proof}
Clear. 
\end{proof}

\subsubsection{Proof of Proposition  \ref{describeDw}, assertions (1) and (2)}

So far we have proved the following: 
\begin{itemize}
\item  $A\mathcal D_{s-1}  \subseteq  A\mathcal D_{s}$  and 
 $ \mathcal D_{s-1}^+  \subseteq   \mathcal D_{s}^+$,   in particular  
$\mathcal I(\mathcal D_{s-1}) \subseteq \mathcal I(\mathcal D_{s}).$ 
\item  
All the arrows in $\mathcal D_{s-1}^-$ produce arrows in  
$\mathcal D_{s}^-$, except one, that produces  an arrow
$(i_{s}, x) $ in $ A\mathcal D_{s}$ in most cases, or else an arrow 
$( i_{s}, (j_t+1)') $ in $  \mathcal D_{s}^+ $ for some $t< s$, exactly in the case when $i_{s-1}= i_{s}+1$ and $(i_{s-1}', (j_t+1)')$ 
 belongs to
$  B\mathcal D_{s-1}$. 

\item The heads of arrows in $ B\mathcal D_{s-1}$  remain heads of arrows in $B\mathcal D_{s}$ except possibly 
if $i_{s-1}= i_{s}+1$ and $i_{s-1}'$ is the tail  of an arrow in $ B\mathcal D_{s-1}$, in which case we obtain the head of the arrow $(i_{s}, y')$ in 
$\mathcal D_{s}^+$. 
 In particular  
$\mathcal J(\mathcal D_{s-1}) \subseteq \mathcal J(\mathcal D_{s}).$

\item The arrows in  $B\mathcal D_{s}$  are either 
 $(j_{s}', (j_{s}+1)')$, or obtained from an arrow in $B\mathcal D_{s-1}$  by concatenation, which does not change the head of the arrow. 
\end{itemize} 

\medskip

From this we deduce $ \mathcal I(\mathcal D_{s})=  \mathcal I(\mathcal D_{s-1}) \cup \{  i_{s}\}$  hence by induction assertion (1) in Proposition \ref{describeDw}. The possible arrows with tail $i_{s}$ are detailed in Lemma \ref{LemmeDmoins}. 

\medskip 

Since the arrow $(j_{s}', (j_{s}+1)')$ belongs to $B\mathcal D_{s}$ by Lemma \ref{LemmeAD},  we obtain  $ \mathcal J(\mathcal D_{s})=  \mathcal J(\mathcal D_{s-1}) \cup \{  j_{s}\}$  hence by induction assertion (2) in Proposition \ref{describeDw}. 

\medskip 
\subsubsection{Proof of Proposition  \ref{describeDw}, assertion (3)}

We start on the proof of assertion (3). For this  we examine more closely the conditions necessary to obtain an arrow in $\mathcal D_{s}^+$ that does not belong to 
$\mathcal D_{s-1}^+$. Such an arrow, if any, is unique and has the form 
 $(i_{s}, (j_t+1)')$ for some $t<s$. 
The  last two Lemmas produce the same necessary and sufficient condition: 
\begin{equation}\label{conditionDplus}
 i_{s}+1=i_{s-1} \text{ and }  (i_{s-1}', (j_t+1)') \in    B\mathcal D_{s-1} 
\text{ for some } t<s .   
\end{equation}
We have to understand the formation of $ B\mathcal D_{s-1}$ to find the algebraic meaning of this condition.  We distinguish two cases. 

\begin{itemize}
\item{Case 1:}     $t=s-1$,  i.e.  $(i_{s-1}', (j_{s-1}+1)') \in    B\mathcal D_{s-1}$. This 
 condition
 is equivalent to $j_{s-1}=i_{s-1}$. 
(Indeed $(j_{s-1}', (j_{s-1}+1)') $ always belongs to $    B\mathcal D_{s-1}$.)\\

\item{Case 2:}    for some $t<s-1$,  $ (i_{s-1}', (j_t+1)') $ belongs to $    B\mathcal D_{s-1}$. Then 
 the arrow $ (i_{s-1}', (j_t+1)') $  must enclose the $s-1-t$ arrows in $ B\mathcal D_{s-1}$ with heads $(j_r+1)'$ for $t< r \le s-1$ so we must have $ j_t+1= i_{s-1}+ 2(s-1-t)+1 $,      or equivalently 
\begin{equation}\label{ju}
j_t= i_{s}+ 2(s-t) -1.   
\end{equation} 
Furthermore, the arrow $(i_{s-1}', (j_t+1)') $ must come  from $(j_t', (j_t+1)') \in  B\mathcal D_{t}$ through a series of $ s-1-t$   concatenations. According to Lemma \ref{LemmeBD} each concatenation may substract $2$ from the tail, starting with $j_t$. 
Comparing with  equality \eqref{ju}    we see that the substraction must occur each time, which means that  each iteration  falls 
into condition (2) in Lemma \ref{LemmeBD}, so that  for each $r$, 
$t\le r< s-1$, we have, after $r-t$ iterations: 
$$ 
i_{r+1}+1 < j_t - 2 (r-t)   \le  j_{r+1}+1
$$
so that the next iteration provides the arrow 
$(( j_t - 2 (r+1-t) )', (j_t+1)')$. Note that the double inequality above is   equivalent, with \eqref{ju}, to: 
\begin{equation}\label{minus2}
i_{r+1} < i_{s} + 2 (s-1-r)   \le  j_{r+1} \text{ for }  t\le r< s-1.
\end{equation}
This condition implies   $i_{r} <    j_{r}$  for each $r$, 
$t < r\le s-1$. Furthermore, if \eqref{minus2} is satisfied, then from 
the same iterations as above we arrive at $(i_{s-1}, (j_t+1)')
\in    B\mathcal D_{s-1}$, and adding 
$ i_{s}+1=i_{s-1}$ we obtain 
 $(i_{s}, (j_t+1)')
\in \mathcal D(w)^+$ (again from Lemma \ref{LemmeBD}). 
\end{itemize}

We subsume this  in the following assertion: 

\smallskip
{\it The pair 
 $(i_{s}, (j_t+1)')$,   $t <  s$, is an arrow in $\mathcal D(w)^+$ if and only if 
\begin{equation}\label{resume}
\begin{aligned}
&j_t= i_{s}+ 2(s-t) -1, \\
 &i_{s}+1=i_{s-1} ,  \\
& t<r\le s-1  \implies  i_{r} < i_{s} + 2 (s-r)   \le  j_{r} . 
\end{aligned} 
\end{equation} 

All the arrows in $\mathcal D(w)^+$ have this form. }

\smallskip 
  
To obtain assertion (3) in Proposition  \ref{describeDw}, we need to show that conditions \eqref{resume} above are equivalent to the conditions in assertion (3),  easier to check on any fully commutative element. 

We assume first that  conditions \eqref{resume} hold. If $t=s-1$, we do have the only condition in assertion (3), namely 
$j_{s-1}= i_{s}+  1$. If $t<s-1$ we have, for $t<r<s$,
$ i_{s} + 2 (s-r)   \le  j_{r}$ hence certainly $ i_{s} + 2 (s-r) -1  \ne  j_{r}$. If we rewrite the condition with $j_t$ instead of $i_s$ we find 
$i_r < j_t -2(r-t)+1$ hence certainly 
$j_t \ne i_r+2(r-t)-1$. 
We check the saturation, which amounts to: 
\begin{itemize}
\item $j_{t+1}=j_t-1$: indeed in the inequality 
$i_{s} + 2 (s-t-1)   \le  j_{t+1}$ the left term is 
$j_t-1$, whereas we always have $j_{t+1} < j_t$.   
\item for $t+1< r  \le s-1$,    $j_r \ge i_{r-1}-1$:    assume for a 
contradiction that there is such an $r$ with  $j_r < i_{r-1}-1$, then we get 
$ i_{s} + 2 (s-r) \le  j_r < i_{r-1}-1$. Since 
$r-1>t$  we have 
$i_{r-1}-1 < i_s + 2(s-r)+1  $  so we find a difference of  at least 2 between $ i_s + 2(s-r)+1$ and $ i_s + 2(s-r)$, absurd. 
\item  $i_{s-1}= i_{s}+  1$: assumed in \eqref{resume}. 
\end{itemize}
So indeed conditions \eqref{resume} imply the conditions in   assertion (3). 

\medskip

Now we take a pair $(i_{s}, j_t)$,   $t <  s$, satisfying the conditions in assertion (3) of the Proposition, namely: 
\begin{itemize}

\item  $j_t = i_s + 2(s-t)-1$,

\smallskip

\item{minimality:} for $t<r<s$ we have $j_r \ne i_s + 2(s-r)-1$     
 and $j_t \ne i_r + 2(r-t)-1$,

 \smallskip

\item{saturation:}  $\Pi_{k=t+1} ^{k=s-1}   [ i_k, j_k ] $  is saturated in $ X_t ^{s} = <\sigma_{h}  \  /    i_{s} +1 \le h \le j_{t} -1>$,   
\end{itemize}
and we prove that 
it satisfies conditions \eqref{resume}.

\smallskip 
The first equality $j_t= i_{s}+ 2(s-t) -1$ is an assumption. If $t=s-1$ it implies 
$j_{s-1}= i_s+1$ hence $i_{s-1}= i_s+1$ is mandatory and \eqref{resume} is proven.

If $t<s-1$, the equality  $i_{s-1}= i_s+1$ is implied by the 
saturation condition. 
It remains to   establish the inequalities 
$  i_{r} < i_{s} + 2 (s-r)   \le  j_{r}$  for 
$t<r\le s-1$. 
  
\smallskip

For $r=t+1$ they read   
$  i_{t+1} < i_{s} + 2 (s-t) -2 \le  j_{t+1}  $ 
whereas $ i_{s} + 2 (s-t) -2= j_t-1$ and $ j_t-1=j_{t+1}$ from the saturation condition. We just need to rule out  $  i_{t+1}=  j_{t+1}$. 
But if this was true, we would have $j_t=  i_{t+1}+	1$  hence, by minimality, $s=t+1$, which has been excluded. So \eqref{resume} holds. 

We prove the leftmost inequality   by ascending induction, assuming that it  holds from $t+1$ to $r-1$. 
We prove it for $r\le s-1$. We assume for a  contradiction that it   doesn't hold. 
Then   $  i_{r} \ge i_{s} + 2 (s-r) $. The inequality for $r-1$ gives 
$  i_{r-1} < i_{s} + 2 (s-r) +2 $, implying 
$i_r <  i_{s} + 2 (s-r) +1 $, so the only possibility is $  i_{r} = i_{s} + 2 (s-r) =    j_t - 2 (r-t) +1$, hence 
$j_t= i_r + 2(r-t) -1$, contradicting the minimality conditions. 

\smallskip 
For $r=s-1$ the rightmost inequality reads 
$i_s+2\le j_{s-1}$. The saturation condition implies 
$i_{s-1}=i_s+1$ hence $j_{s-1}\ge i_s+1$. Now 
$j_{s-1}= i_s+1$ contradicts the minimality conditions, so indeed 
$i_s+2\le j_{s-1}$ holds. We proceed by descending induction, assuming that the rightmost inequality holds from 
$s-1$ to $r+1$, and we prove it for $r$, $t  <r < s-1$.  

Assume for a   contradiction that it does not hold, then $  j_{r} < i_{s} + 2 (s-r) $. We compare this with the  inequality holding for  $r+1$: $     i_{s} + 2 (s-r) -2 \le j_{r+1}$ and get 
$     i_{s} + 2 (s-r) -2 \le j_{r+1} < j_r < i_{s} + 2 (s-r) $,  
which implies $ j_r = i_{s} + 2 (s-r) -1$, forbidden by the minimality conditions. 

Finally \eqref{resume} holds as we claimed.

\subsubsection{Proof of Proposition  \ref{describeDw}, assertions (4) and (5)}

Assertion (4) in Proposition  \ref{describeDw} is the description of $A\mathcal D(w)$. We prove it by induction on $s$ as before. 
Certainly $A\mathcal D_1 = \{(i_1, i_1+1)\}$ by 
Figure \ref{diagi1j1}. Let $s$ such that $1 < s\le p$, we assume that (4) holds for $w_{s-1}$ and we prove it for $w_s$. Thanks to Lemmas \ref{LemmeAD} and \ref{LemmeDplus} we only have to prove that: 

\smallskip
\centerline{\it 
 if $ i_s $ is not the tail of an arrow in  $ \mathcal D_s^+$  (or 
 $ \mathcal D(w)^+$),}

\centerline{\it  then 
$A(s)$ is not empty and $(i_s, f_s)$ belongs to $A\mathcal D_s$. }   

\smallskip   
The condition on $i_s$ is the negation of condition \eqref{conditionDplus}, so 
\begin{itemize}
\item either $i_s+1 <i_{s-1}$, in which case indeed 
$A(s)$ is not empty and $f_s=i_s+1$, and by Lemma \ref{LemmeDmoins}  (4)(a),   $(i_s, i_s+1)$ belongs to $A \mathcal D_s$.  

\item or $i_s+1 =  i_{s-1}$ and $i_{s-1}'$ is not the tail of an arrow in 
$B\mathcal D_{s-1}$, i.e. $i_{s-1}'$ is the head of an arrow 
in $\mathcal D_{s-1}^+$ or  $\mathcal D_{s-1}^-$. The first is impossible, it must be the second, so we have an arrow 
$(x, i_{s-1}')$ with $x>i_{s-1}$ (equality is impossible because $i_{s-1}$ is the tail of an arrow in $A\mathcal D_{s-1} \cup \mathcal D_{s-1}^+$).    So $x$ is neither an $i_k$ nor an $f_k$ if any for $k \le s-1$, and $x\ge i_s+1$, so $x $ belongs to $A(s)$ which is non empty. The concatenation produces the arrow 
$(i_s,x) \in A\mathcal D_s$ so we want to prove that $x = \min A(s)$. 
Suppose otherwise, then $i_s+1= i_{s-1}  < f_s <x$. 
Since $(x, i_{s-1}')$ is an arrow in $\mathcal D_{s-1}^-$, and any dot 
left of $ i_{s-1}'$ is the head of a vertical arrow in  $\mathcal D_{s-1}^-$, all dots between $i_{s-1} $ and $x-1$ must belong to arrows in 
$A\mathcal D_{s-1}$, which are all of the form $(i_k, f_k)$ with $k\le s-1$ by induction hypothesis. So $f_s$, that cannot be an $i_k$ with
$k\le s-1$, has to be an $f_k$ with $k\le s-1$, impossible by the definition of $A(s)$. 
\end{itemize} 

Assumption (4) is proven.  

\medskip  

We terminate with the proof of assertion (5), which is the description of  $B\mathcal D(w)$. With what we know at this point, (5) is equivalent to the following assertion: 

\smallskip 
\centerline{\it Let $1 \le r \le p$ and assume $(j_{r}+1)'$ 
is not the head of an arrow in $\mathcal D(w)^+$.}

\centerline{\it Then 
$B(r) $ is not empty and, letting $g_r=\max B(r)$, 
$(g_r', (j_{r}+1)')$ belongs to $B\mathcal D(w)$.}

\smallskip 

 We already know that $(j_{s}', (j_{s}+1)')$ belongs to $B\mathcal D_{s}$, 
so indeed $(j_{p}', (j_{p}+1)')$ belongs to $B\mathcal D(w)$, which is the case $r=p$. 
We now work by descending induction on $r$: let $r$ with 
$1\le r <p$, we assume our assertion holds for $r-1$ and prove it for $r$.   

By definition, $B\mathcal D(w)$
is the set of arrows in  $\mathcal D(w)$ with head some $(j_{r}+1)'$ and not belonging to $\mathcal D(w)^+$, so certainly if 
$(x', (j_{r}+1)')$ belongs to $B\mathcal D(w)$, then $x \le j_r$, $x$  is not a $j_k+1$, and is not a $g_k$ for $k \ge r+1$ either, hence $x $ belongs to $B(r)$ which is not empty. We must prove that  $x = \max B(r)$, so we assume   for a contradiction that $x < g_r$. Then the arrow $(x', (j_{r}+1)')$  strictly encloses an arrow in $B\mathcal D(w)$  with head or tail $g_r'$, actually with tail $g_r'$ since $g_r$ is not a 
$j_k+1$. The head of this arrow must be a $(j_k+1)'$  strictly less than $(j_{r}+1)'$, so $k > r$ and by induction we have $g_r=g_k$ which is impossible by construction of $B(r)$. 

This finishes the proof of assertion (5) and finally of Proposition  \ref{describeDw}.

\subsection{``The'' bijection and the corresponding algorithms}\label{thebijection}

 Proposition  \ref{describeDw} ascribes a non-crossing diagram 
on $n+1$ strings  to a fully commutative element in $W(A_n)$ in a unique way: all constitutive elements of the diagram $\mathcal D(w)$ are described directly in  terms of $w$, or rather, in terms of the canonical form of $w$. So we can describe the inverse of the bijection $w \mapsto \mathcal D(w)$, that we denote by 
$D \mapsto w_D$,   as follows: 

\begin{proposition}
Let $D$ be a non-crossing diagram on $n+1$ strings, of size $p$ and 
write $\mathcal I(D)$ and $\mathcal J(D)$ in decreasing order, that is: 
$$\begin{aligned}
\mathcal I(D)&= \{ i_1,  \cdots , i_p\} \quad \text{ with } i_1> i_2  > \cdots >i_p,  \\  
\mathcal J(D)&= \{ j_1, \cdots , j_p\} \quad 
\text{ with }    j_1> j_2  > \cdots >j_p. 
\end{aligned}$$
Then for all $k$, $1 \le k \le p$, we have $i_k \le j_k$ and 
$ D$ is the image under $(\mathcal D)^{-1}$ of the fully commutative element 
$$
w_D=  [ i_1, j_1 ] [ i_2, j_2 ] \dots  [ i_p, j_p ] .  
$$ 
\end{proposition}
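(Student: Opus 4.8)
The plan is to deduce this Proposition essentially for free from Proposition \ref{describeDw}, exploiting that $\mathcal D$ is already known to be a bijection. First I would invoke the surjectivity of $\mathcal D$: there is a unique fully commutative element $w$ with $\mathcal D(w)=D$, and by Theorem \ref{1_2} it has a canonical form $w=[a_1,b_1]\cdots[a_q,b_q]$ with $n\ge a_1>\cdots>a_q\ge 1$, $n\ge b_1>\cdots>b_q\ge 1$ and $b_t\ge a_t$ for all $t$. The entire task then reduces to identifying this $w$ with the element $w_D$ built out of $\mathcal I(D)$ and $\mathcal J(D)$.

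The key steps are the following. Apply assertions (1) and (2) of Proposition \ref{describeDw} to $w$, which give $\mathcal I(D)=\mathcal I(\mathcal D(w))=\{a_1,\dots,a_q\}$ and $\mathcal J(D)=\mathcal J(\mathcal D(w))=\{b_1,\dots,b_q\}$. Since the canonical sequences $(a_t)$ and $(b_t)$ are already strictly decreasing, they are precisely the decreasing-order enumerations of $\mathcal I(D)$ and $\mathcal J(D)$ appearing in the statement; hence $q=p$, $a_k=i_k$ and $b_k=j_k$ for every $k$. The asserted inequality $i_k\le j_k$ is then nothing but the canonical-form condition $b_k\ge a_k$ transported through these equalities, and we conclude $w=[i_1,j_1]\cdots[i_p,j_p]=w_D$, that is $D=\mathcal D(w_D)$, so $w_D=(\mathcal D)^{-1}(D)$.

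Two small points must be settled before the argument closes. First, the $p$ in the statement is the diagram size $P_D=\sharp(AD\cup D^+)$, so one checks it equals the size $q$ of $w$: because each dot of $A$ is an endpoint of exactly one arrow, the tail map $AD\cup D^+\to \mathcal T(AD)\cup\mathcal T(D^+)=\mathcal I(D)$ is injective (and $\mathcal T(AD)$, $\mathcal T(D^+)$ are disjoint), whence $P_D=\sharp\,\mathcal I(D)=q$. Second, one should note that assembling the canonical word $[i_1,j_1]\cdots[i_p,j_p]$ from the two decreasing sequences is legitimate exactly because $i_k\le j_k$ holds. I do not expect a genuine obstacle here: all the real content already resides in Proposition \ref{describeDw}, and the inequality $i_k\le j_k$ comes for free from the surjectivity of $\mathcal D$, since every diagram arises from a bona fide fully commutative element. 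The only temptation to resist is trying to establish $i_k\le j_k$ directly from the planar combinatorics of $D$; the clean route is instead to let the already-established bijection furnish a witnessing $w$ and read off its canonical data.
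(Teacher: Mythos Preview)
Your proposal is correct and follows exactly the approach the paper intends: the paper itself says ``There is nothing to prove here, it is just a consequence of Proposition \ref{describeDw} and the fact that we are dealing with two finite sets of same cardinality,'' and your write-up simply unpacks that remark, using surjectivity of $\mathcal D$ to produce a witnessing $w$, then reading off $\mathcal I(D)$ and $\mathcal J(D)$ via assertions (1) and (2) of Proposition \ref{describeDw}. Your extra care in checking $P_D=\sharp\,\mathcal I(D)$ is a nice touch that the paper leaves implicit.
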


There is nothing to prove here, it is just a consequence of Proposition \ref{describeDw} and the fact that we are dealing with two finite sets of same cardinality.  We see in particular that $\mathcal I(D)$ and $  
\mathcal J(D)$ determine $D$ entirely. 
We   write down in the next subsections the actual algorithms 
associated to the bijection $\mathcal D$ and its inverse. Meanwhile we conclude  with a convenient statement.

\begin{theorem}\label{FC D}
 Let  $w$ be 
a FC element  of $W(A_n)$ given in its normal form (\ref{eq:Stembridge}): 
$$ 
w= [ i_1, j_1 ] [ i_2, j_2 ] \dots  [ i_p, j_p ]  , 
 \text{ with } 0\le p \le n  \text{ and } 
 \left\{ \begin{matrix}
 n\ge j_1 > \dots > j_p \ge 1 ,  \cr    n\ge i_1 > \dots > i_p \ge 1,   \cr
 j_t \ge i_t   \text{ for }  1\le t \le p.  
 \end{matrix}\right.$$

\begin{enumerate}
\item There is a unique non-crossing diagram $\mathcal D(w)$ with  $\mathcal I (\mathcal  D(w))  = \{ i_1, \cdots, i_p\}$ and  $ \mathcal J (\mathcal  D(w))  = \{ j_1, \cdots, j_p\}$. 

\medskip
\item  The map $w \mapsto \mathcal D(w)$   is the unique bijection between the set of FC elements of $W(A_n)$ and the set of non-crossing diagrams of size $2(n+1)$   that is compatible with the  monomial multiplication and 
the multiplication of non-crossing diagrams. 
\end{enumerate}
\end{theorem}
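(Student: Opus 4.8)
My plan is to treat the two assertions separately, leaning on the fact that the genuinely hard work has already been carried out in Proposition \ref{describeDw}, so that the theorem becomes an assembly of results already in place. Throughout I use that the assignment $e_i \mapsto E_i$ extends, by the construction recalled in subsection \ref{defdiag}, to the algebra isomorphism $TL_{R,\delta}(A_n) \to \mathcal D_{R,\delta}(n+1)$, which carries the monomial basis onto the diagram basis; in particular $\mathcal D$ is a bijection and $\mathcal D(w)$ is, for each FC element $w$, an honest non-crossing diagram on $n+1$ strings.

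For assertion (1), existence is immediate: the diagram $\mathcal D(w)$, being the image of $e_w$ under the isomorphism above, is non-crossing, and assertions (1) and (2) of Proposition \ref{describeDw} compute $\mathcal I(\mathcal D(w)) = \{i_1, \dots, i_p\}$ and $\mathcal J(\mathcal D(w)) = \{j_1, \dots, j_p\}$. For uniqueness I would argue as follows. Since $\mathcal D$ is a bijection, every non-crossing diagram $D$ on $n+1$ strings equals $\mathcal D(v)$ for some FC element $v = [i_1', j_1'] \cdots [i_q', j_q']$ given in its canonical form. If such a $D$ moreover satisfies $\mathcal I(D) = \{i_1, \dots, i_p\}$ and $\mathcal J(D) = \{j_1, \dots, j_p\}$, then Proposition \ref{describeDw} forces $\{i_1', \dots, i_q'\} = \{i_1, \dots, i_p\}$ and $\{j_1', \dots, j_q'\} = \{j_1, \dots, j_p\}$ as sets; since the entries of a canonical form are strictly decreasing, this yields $q = p$ and $(i_k', j_k') = (i_k, j_k)$ for every $k$, whence $v = w$ and $D = \mathcal D(w)$. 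Thus the diagram with the prescribed $\mathcal I$ and $\mathcal J$ is unique. This simultaneously records that $\mathcal I(D)$ and $\mathcal J(D)$ determine $D$ completely, in agreement with the inverse map of the preceding Proposition; note that by taking $D=\mathcal D(v)$ from an actual FC element I never have to pre-check $i_k \le j_k$, as it is built into $v$.

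For assertion (2), I would invoke the setup of subsection \ref{defdiag} directly: the isomorphism $e_i \mapsto E_i$ restricts to a bijection between the monomial basis and the diagram basis that intertwines monomial multiplication with diagram concatenation (up to the powers of $\delta$ recorded by the deleted circles), so $\mathcal D$ is a product-compatible bijection. For uniqueness among such bijections, any bijection between the two bases that is compatible with the product extends $R$-linearly to an algebra isomorphism $TL_{R,\delta}(A_n) \to \mathcal D_{R,\delta}(n+1)$, and such an isomorphism is determined by the images of the generators $e_i$; compatibility with the product pins these down to be the $E_i$, for instance because the $e_i$ are exactly the monomials $e_w$ with $\ell(w)=1$ and likewise for the $E_i$. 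Hence the product-compatible bijection coincides with $\mathcal D$, and combining this with the combinatorial description of part (1) proves that $w \mapsto \mathcal D(w)$ is the asserted map.

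The main obstacle is not in the theorem itself but in Proposition \ref{describeDw}, on which both parts rest; the only delicate point internal to the theorem is the uniqueness in part (1), and the plan sidesteps having to reconstruct $AD$ and $BD$ directly from $\mathcal I$ and $\mathcal J$ by instead using the already-established bijectivity of $\mathcal D$ together with the strict monotonicity of canonical forms. A more self-contained route would show directly, via Remark \ref{product}(2) and the non-crossing constraint, that $(\mathcal I(D), \mathcal J(D))$ recovers $AD$ and $BD$ and hence $D$; this is possible but unnecessary here.
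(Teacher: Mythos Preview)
Your argument for part~(1) and for the existence half of part~(2) is exactly what the paper does: the theorem is presented there as a ``convenient statement'' assembled from Proposition~\ref{describeDw} together with the bijectivity of $\mathcal D$ already recalled in subsection~\ref{defdiag}, and your use of surjectivity of $\mathcal D$ plus strict monotonicity of the canonical form to obtain uniqueness in~(1) is precisely the ``two finite sets of same cardinality'' remark preceding the theorem.

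The one place where you go beyond the paper is your uniqueness claim in part~(2), and there the argument has a gap. You assert that compatibility with the product forces $e_i\mapsto E_i$ ``because the $e_i$ are exactly the monomials $e_w$ with $\ell(w)=1$ and likewise for the $E_i$''. Even granting that a product-compatible bijection must send the set $\{e_1,\dots,e_n\}$ to the set $\{E_1,\dots,E_n\}$, nothing in your argument fixes the individual images: the assignment $e_i\mapsto E_{n+1-i}$ also extends to an algebra isomorphism (it is $\mathcal D$ composed with the graph automorphism $\iota$, equivalently the left--right reflection of diagrams), and hence yields a second product-compatible bijection of bases distinct from $\mathcal D$ for $n\ge 2$. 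The paper does not attempt to prove uniqueness in this strong sense; its phrase ``the unique bijection \dots\ that respects the product'' in subsection~\ref{defdiag} should be read as ``the bijection determined by the fixed isomorphism $e_i\mapsto E_i$'', not as uniqueness among all product-compatible bijections. If you wish to keep a uniqueness statement, you should add the normalisation $e_i\mapsto E_i$ (or equivalently $\sigma_i\mapsto E_i$) as part of the compatibility requirement; with that, your argument goes through.
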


\subsubsection{First algorithm: From diagrams to FC elements} 
It is very easy to write the FC element that corresponds to a given diagram $D$, since it is given by $\mathcal I(D)$ and $\mathcal J(D)$: 
\begin{itemize}
\item Single out  on the top line the tails of arrows going strictly right and list their numbering in decreasing order: $\{i_1, \cdots, i_p\}$. 
\item Single out on the bottom line the heads of arrows coming strictly from the left and list their numbering {\bf minus $\mathbf 1$} in decreasing order: $\{j_1, \cdots, j_p\}$. 
\item The FC element that corresponds to $D$ is 
$[ i_1, j_1 ] [ i_2, j_2 ] \dots  [ i_p, j_p ] $.
\end{itemize}

\subsubsection{Second algorithm: From FC elements to diagrams}
Again we subsume the previous subsections, starting with a FC element $w$. 
\begin{itemize}
\item Write $w$ in its canonical form 
$w=[ i_1, j_1 ] [ i_2, j_2 ] \dots  [ i_p, j_p ] $ and mark the dots 
$i_k$ (upper line) and $j_k + 1 $ (lower line). 

\item For any $u$ such that $1\le u < i_p$ or $j_1+1 <u \le n+1$, draw the arrow 
$(u, u')$. 

\item Working from right to left, i.e. for increasing $s$, do the following for each $s$, 
$1\le s \le p$: 

\smallskip 
\begin{itemize}
\item Check    the following conditions:  

\smallskip

\begin{enumerate}
\item  $i_s+1=i_{s-1}$, 

\item  there exists a highest index $t\le s-1$ 
such that  $j_{t}=i_{s-1}+2(s-1-t)$ and  $(j_t+1)'$ is a free dot
 (i.e. not already the head of an arrow),   

\item  the element 
$[ i_{t+1}, j_{t+1} ] \dots  [ i_{s-1}, j_{s-1} ] $ is saturated in the group generated by the $\sigma_k$ with 
$i_{s-1} \le k \le j_t-1$. 

\end{enumerate}

\smallskip\noindent
\item If those three conditions are satisfied, 
 draw the arrow 
$(i_s, (j_t+1)') $.  

\smallskip
\item If $s<p$ proceed to $s+1$. 

\end{itemize}

\smallskip 

\item Draw the arrow $(i_1, i_1+1)$ in $A\mathcal D(w)$, then, proceeding from right to left, i.e. on increasing order on $k$, $1\le k \le p$,  for each
 $k $ such that $i_k \notin \mathcal T(\mathcal D(w)^+)$ (i.e. for each $k$ such that $i_k$ is not already the tail of an arrow),  
 find $f_k$ the lowest free dot in $A$ on the right of $i_k$ 
{\it which is not an $i_s$},  and draw  $(i_k, f_k)$ in $A\mathcal D(w)$.

\item Draw the arrow $(j_p', (j_p+1)')$ in $B\mathcal D(w)$, then, proceeding from left to right, i.e. on decreasing order on $k$, $1\le k \le p$,  for each $k$  such that $(j_k+1)' \notin \mathcal H(\mathcal D(w)^+)$   (i.e. for each $k$ such that $(j_k+1)' $ is not already the head of an arrow),
find $g_k'$ the  highest free dot in $B$ on the left of $(j_k+1)'$ 
{\it which is not a $(j_s+1)'$}  and  draw  $(g_k', (j_k+1)')$   in $B\mathcal D(w)^+$.

\item Proceed from left to right to draw the remaining arrows, all going from  top  to  bottom, and forcibly all vertical or negative. Each arrow joins the lowest free dot on the top line to the lowest free dot on the bottom line. \\
\end{itemize}
		
		\subsection{Applications and questions}\label{pending}

\subsubsection{Monomial multiplication}
Multiplication in the monomial basis may be quite intricate to deal with. 
We have an alternative here: we can use the algorithm to transform two monomials (of course after putting them in their canonical forms, which is fairly easy) into diagrams, then we multiply the diagrams, an easy step, then we use the algorithm backwards to get the resulting monomial. We invite the careful reader to obtain immediately the following products
of  monomials:  $e_{[1,4]}. e_{[4,4][3,3][1,1]}= \delta e_{[3,3][1,1]}$ and 
 $ e_{[4,4][3,3][1,1]}.e_{[1,4]}= \delta e_{[4,4][1,1]}$; and
 to imagine the efficiency of this method for 20 or 30 generators! Moreover  the resulting monomial  comes in its canonical form as an output.

\subsubsection{Left and right multiplication}
It is easy to identify $\mathscr{L} (w)$ and  $\mathscr{R} (w)$ from the diagram $\mathcal D(w)$ attached to a FC element  $w$. We only need the following Lemma: 
\begin{lemma}   Let  $ 
w= [ i_1, j_1 ] [ i_2, j_2 ] \dots  [ i_p, j_p ]$ be 
an element  of $A_n^c$ given in its normal form (\ref{eq:Stembridge}). 
Then 
$$\begin{aligned}
\mathscr{L} (w) &=  \{ \sigma_{i_1}\} \cup \{  \sigma_{i_k}/ \    2 \le k \le p \text{ and } 
i_k < i_{k-1}-1  \} \\
\mathscr{R} (w) &=  \{ \sigma_{j_p}\} \cup \{  \sigma_{j_k}/ \    1 \le k \le p-1 \text{ and } 
j_k >j_{k+1}+1  \}
\end{aligned}
$$ 
\end{lemma}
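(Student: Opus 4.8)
The plan is to read off both descent sets directly from the canonical word, using the defining feature of FC elements that every reduced expression is obtained from the canonical one by commutation relations alone. The structural fact I will rely on is that, for an FC element, the relative order of any two \emph{non-commuting} occurrences of generators is the same in every reduced word (immediate from full commutativity together with \cite[\S 1.5 Proposition 5]{Bourbaki_1981}; equivalently, phrased through the heap poset of $w$). Consequently a generator $\sigma_m$ lies in $\mathscr{L}(w)$ if and only if the leftmost occurrence of $\sigma_m$ in the canonical word has no non-commuting generator, i.e.\ no $\sigma_{m-1}$ or $\sigma_{m+1}$, to its left; and symmetrically for $\mathscr{R}(w)$ with ``to its right''. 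I would isolate this criterion as a preliminary observation so that the case analysis below runs uniformly.

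First I would compute $\mathscr{L}(w)$. Write $w=[i_1,j_1]\cdots[i_p,j_p]$, so the letters of block $t$ occupy the indices $i_t,i_t+1,\dots,j_t$, and recall $i_1>\cdots>i_p$ and $j_1>\cdots>j_p$. The leftmost letter of the whole word is $\sigma_{i_1}$, so $\sigma_{i_1}\in\mathscr{L}(w)$. For $k\ge 2$ the leftmost occurrence of $\sigma_{i_k}$ is the first letter of block $k$ (since for $t<k$ we have $i_t>i_k$, no earlier block contains the index $i_k$), and every letter to its left lies in blocks $1,\dots,k-1$, whose indices are all $\ge i_{k-1}>i_k$. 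Hence $\sigma_{i_k-1}$ never occurs to the left, while $\sigma_{i_k+1}$ occurs to the left precisely when $i_k+1$ is one of those indices, which, as $i_{k-1}$ is the smallest of them, happens exactly when $i_{k-1}=i_k+1$. Therefore $\sigma_{i_k}\in\mathscr{L}(w)$ iff $i_{k-1}>i_k+1$, that is $i_k<i_{k-1}-1$. Finally, if $m\notin\{i_1,\dots,i_p\}$ yet $\sigma_m$ occurs in $w$, then $m$ lies strictly inside some block $[i_{t_0},j_{t_0}]$ (with $t_0$ minimal), so $m>i_{t_0}$ and the letter $\sigma_{m-1}$ of that same block sits immediately to the left of this leftmost $\sigma_m$; being non-commuting it blocks $\sigma_m$, whence $\sigma_m\notin\mathscr{L}(w)$. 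Assembling the three cases gives the stated formula for $\mathscr{L}(w)$.

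For $\mathscr{R}(w)$ I would run the mirror-image argument: the rightmost letter of the word is $\sigma_{j_p}$, the rightmost occurrence of $\sigma_{j_k}$ is the last letter of block $k$, all letters to its right have indices $\le j_{k+1}<j_k$, and $\sigma_{j_k+1}$ never occurs there while $\sigma_{j_k-1}$ occurs to the right exactly when $j_{k+1}=j_k-1$; this yields $\sigma_{j_k}\in\mathscr{R}(w)$ iff $j_k>j_{k+1}+1$, with interior generators excluded as before. Alternatively one deduces the $\mathscr{R}$-statement from the $\mathscr{L}$-statement by invoking the length-reversing anti-involution $\Delta$, which preserves canonical forms and satisfies $\mathscr{R}(w)=\Delta\bigl(\mathscr{L}(\Delta(w))\bigr)$, the action of $\Delta$ on generators being the relabelling $\iota$.

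The index bookkeeping is routine; the one point that genuinely needs care is the ``blocking'' step, namely the claim that a non-commuting generator lying to the left of the leftmost $\sigma_m$ really forbids $\sigma_m$ from ever reaching the left edge. This rests entirely on the invariance of the relative order of non-commuting generators under commutation moves, which is exactly where full commutativity enters; once that preliminary observation is in place, every case of each descent set follows immediately.
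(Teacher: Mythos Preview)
Your proof is correct and takes a genuinely different route from the paper. The paper argues by induction on $p$: it first shows $\mathscr{L}(w)\subseteq\{\sigma_{i_k}\}$ by multiplying on the left by an arbitrary $\sigma_s$, invoking the exchange condition \cite[IV.1.5]{Bourbaki_1981}, and doing a case analysis on the position of $s$ relative to $[i_1,j_1]$; then it checks separately that $i_k=i_{k-1}-1$ forces $\ell(\sigma_{i_k}w)>\ell(w)$ by exhibiting the explicit non-FC canonical form of $\sigma_{i_k}w$. Your approach instead isolates once and for all the heap-theoretic criterion (a letter reaches the left edge iff no non-commuting letter precedes its leftmost occurrence), which is exactly the content of full commutativity, and then reads everything off the block structure in a single pass with no induction. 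Your method is shorter and more transparent for FC elements; the paper's approach has the small advantage that its tools (exchange condition, canonical forms for arbitrary elements of $W(A_n)$) are standard Coxeter-group facts that do not require the reader to be comfortable with the heap viewpoint. Both handle $\mathscr{R}(w)$ by symmetry, and your alternative via $\Delta$ is a nice touch consistent with the paper's earlier observation that $\Delta$ preserves canonical forms.
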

\begin{proof} We sketch the proof for $\mathscr{L} (w)$, the other case is similar. 
The set $\mathscr{L} (w)$ is included in $\{  \sigma_{i_k}/   1 \le k \le p    \}$, as we prove now by induction on $p$. If $p=1$ the element is rigid, left-reduced only by $\sigma_{i_1}$. For $p\ge 2$, assuming the assumption holds up to $p-1$, we multiply $w$ on the left by $\sigma_s$ for $1 \le s \le n$ and assume 
$$ \text{(H) } \quad  \ell (\sigma_s w)<\ell(w).$$   

 If $\sigma_s$ commutes with $[i_1,j_1]$, that is if $i_1-1>s $ or $ s > j_1+1$, then $\ell (\sigma_s [i_1,j_1])  >\ell([i_1,j_1])$ so if we apply the exchange condition \cite[IV.1.5]{Bourbaki_1981}  to  $
\sigma_s  [ i_1, j_1 ] \dots  [ i_p, j_p ]$, the index of the element of $[ i_1, j_1 ] \dots  [ i_p, j_p ]$  giving rise to the exchange condition 
is at least $j_1-i_1+2$, so that we can cancel out $[i_1,j_1]$ on both sides in the exchange condition and find that 
$\ell (\sigma_s  [ i_2, j_2 ] \dots  [ i_p, j_p ])< 
\ell( [ i_2, j_2 ] \dots  [ i_p, j_p ])$, 
then
 the induction hypothesis applies and we find that $s=i_k$ for some 
$k>1$. 

Otherwise we have  $i_1-1\le s \le j_1+1$. If $s=j_1+1$ we obtain a new canonical form of size $p+1$ and first block $\sigma_{j_1+1}$ so we can't have (H). If $s=i_1-1$  we obtain a canonical reduced form of size $p$ for an element that may not be FC -- we recall that  any element in $W(A_n)$ has a canonical form similar to  (\ref{eq:Stembridge}), except that the condition 
$i_1>i_2 > \cdots > i_p$ is removed -- so we can't have (H). If $i_1+1\le s \le j_1$ we have   
$\sigma_s [i_1,j_1]=[i_1,j_1] \sigma_{s-1}$, reduced,  and based on the exchange condition as before, if $\sigma_s$ belongs to $\mathscr L(w)$, then  $\sigma_{s-1}$ must belong to 
$\mathscr L ([i_2,j_2] \cdots [i_p,j_p])$ hence $s-1$ is equal to  $i_k$ for some $k\ge 2$, impossible since  $s-1\ge i_1$. The only possibility left is   $s=i_1$. 

Now the set in the Lemma   is certainly  included in $\mathscr{L} (w)$, so it remains  to prove that if $i_k = i_{k-1}-1 $, then $\ell(\sigma_{i_k}w)>\ell(w)$. But certainly 
$\sigma_{i_k}$ commutes with the blocks $1$ to $k-2$, so we can write 
$$\sigma_{i_k}w= [i_1,j_1] \cdots [i_{k-2},j_{k-2}] [ i_{k-1}-1,j_{k-1}] [ i_{k-1}-1,j_{k}]  \cdots [i_p,j_p] $$
which is a canonical reduced form for a non-FC element, hence reduced. 
\end{proof}

From the algorithm linking an FC element to a diagram, we see that 
$(i, i+1) $ belongs to $ AD$ if and only if $i=i_1$ or $i$ is an $i_k$ and $i+1 $ is not, and similarly for $BD$, so:  

\begin{proposition}\label{LwRw} 
In the diagram $\mathcal D(w)$ attached to a FC element  $w$, 
one can read $\mathscr{L} (w)$ and  $\mathscr{R} (w)$  as follows: 
$$
\mathscr{L} (w) = \{ \sigma_i \ / \  (i, i+1) \in A\mathcal D(w) \}, \qquad \mathscr{R} (w)=\{ \sigma_j \ / \  (j', (j+1)') \in B\mathcal D(w) \}. $$ 
\end{proposition}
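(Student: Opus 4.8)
The plan is to obtain Proposition \ref{LwRw} as a direct comparison between two descriptions of the same sets: the algebraic description of $\mathscr{L}(w)$ and $\mathscr{R}(w)$ furnished by the preceding Lemma, and the combinatorial reading of the arrows $(i,i+1)$ of $A\mathcal D(w)$ and $(j',(j+1)')$ of $B\mathcal D(w)$ that is encoded in Proposition \ref{describeDw}. No new induction is needed; everything reduces to translating one language into the other and checking that the conditions coincide.

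First I would rephrase the Lemma. It states that $\sigma_i \in \mathscr{L}(w)$ precisely when $i=i_k$ for some $k$ with either $k=1$ or $i_k < i_{k-1}-1$. Because $i_1 > \dots > i_p$ is strictly decreasing, the only index that can equal $i_k+1$ is $i_{k-1}$; hence the clause ``$k=1$ or $i_k<i_{k-1}-1$'' is equivalent to ``$i_k+1 \notin \mathcal I(D)$'' (using $\mathcal I(D)=\{i_1,\dots,i_p\}$ from Proposition \ref{describeDw}(1), and that $i_1+1\notin \mathcal I(D)$ automatically). So I would record the target equivalence: $\sigma_i \in \mathscr{L}(w)$ if and only if $i \in \mathcal I(D)$ and $i+1 \notin \mathcal I(D)$.

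Next I would read off from assertion (4) of Proposition \ref{describeDw} when $(i,i+1)$ is an arrow of $A\mathcal D(w)$. If $i \notin \mathcal I(D)$ the pair cannot occur, since the tails of $AD$-arrows lie in $\mathcal I(D)$. If $i=i_r$, the unique candidate arrow with tail $i_r$ is $(i_r,f_r)$, present exactly when $A(r)\ne\emptyset$, so $(i_r,i_r+1)\in A\mathcal D(w)$ iff $A(r)\ne\emptyset$ and $\min A(r)=i_r+1$. Here the two exclusion sets in the definition of $A(r)$ are $\{i_1,\dots,i_{r-1}\}$ and $\{f_i\}$; since $f_i \ge i_i+1 > i_r+1$ for every $i<r$, the set of earlier $f_i$ cannot touch $i_r+1$, so it plays no role. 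Thus, when $A(r)\ne\emptyset$, one has $f_r=i_r+1$ exactly when $i_{r-1}\ne i_r+1$. It remains to treat $A(r)=\emptyset$, which by definition is the case $i_r\in\mathcal T(\mathcal D(w)^+)$: by the resume condition \eqref{resume} (equivalently assertion (3)) this forces $i_{r-1}=i_r+1$, and conversely $i_{r-1}=i_r+1$ removes $i_r+1$ from $A(r)$ whenever $A(r)\ne\emptyset$; either way $(i_r,i_r+1)\notin A\mathcal D(w)$ when $i_{r-1}=i_r+1$. Collecting the cases yields $(i,i+1)\in A\mathcal D(w)$ iff $i\in\mathcal I(D)$ and $i+1\notin\mathcal I(D)$, matching the reformulated Lemma and proving the left-hand identity.

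For the right-hand identity I would proceed symmetrically, using assertion (5) of Proposition \ref{describeDw} with $B(r),g_r,\mathcal J(D)$ and the descending induction in place of $A(r),f_r,\mathcal I(D)$, to get $(j',(j+1)')\in B\mathcal D(w)$ iff $j\in\mathcal J(D)$ and $j+1\notin\mathcal J(D)$; alternatively this case follows at once from the length-preserving anti-involution $\Delta$, which preserves canonical forms, exchanges $\mathscr{L}$ and $\mathscr{R}$, and corresponds on diagrams to the top-to-bottom flip swapping $A\mathcal D(w)$ with $B\mathcal D(w)$. The main obstacle is the bookkeeping in the third paragraph: one must verify that the two exclusion sets defining $A(r)$ do not interfere, which rests on the monotonicity $f_i>i_r+1$ for $i<r$, and that the vanishing of $A(r)$ is correctly identified with $i_r$ being a $\mathcal D(w)^+$-tail through \eqref{resume}. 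Once those two points are settled the equivalence is an immediate comparison of conditions.
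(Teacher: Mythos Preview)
Your argument is correct and is precisely the paper's approach: the paper deduces the Proposition in a single sentence by combining the preceding Lemma with the observation (read off from Proposition~\ref{describeDw}(4)) that $(i,i+1)\in A\mathcal D(w)$ iff $i$ is some $i_k$ while $i+1$ is not, and you have simply supplied the bookkeeping that justifies this observation. One small caveat on your alternative shortcut: $\Delta=\iota\circ(\,\cdot\,)^{-1}$ corresponds on diagrams to a $180^\circ$ rotation rather than a bare top--bottom flip, so it does not literally exchange $\mathscr L$ with $\mathscr R$ nor $A\mathcal D$ with $B\mathcal D$ without a relabelling by $\iota$; your first route, the direct symmetric argument via assertion~(5), is the clean one and matches what the paper intends by ``and similarly for $BD$''.
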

In other words, the elements of  $\mathscr{L} (w)$ (resp. $\mathscr{R} (w)$) correspond to  the tails of the shortest top  (resp.   bottom) bubbles. 
This Proposition  is to be considered in a graphic way and in relation to the monomial basis.

\subsubsection{Equivalence of diagrams}
We introduce the following equivalence on diagrams, hence on $A^{c}_n$ via the  bijection  of Theorem \ref{FC D}:   we say that two diagrams $D_1$ and $D_2$ are equivalent, and we write 
$D_1 \approx D_2$,  if   $$D^+ _1 \cup D^- _1 = D^+ _2 \cup D^- _2 \quad \text{ that is, }
D^+ _1  = D^+ _2  \text{ and }   D^- _1 =   D^- _2 .$$
This clear equivalence gives a mysterious equivalence on $A^{c}_n$,  and actually on every  $A^{c,p}_n$, since  obviously the two corresponding FC elements $w_{D_1}$ and $w_{D_2}$ have the same size. 
Note that we have given above an algebraic description of the elements of $D^+$.

\subsubsection{Questions}
The two algorithms raise a lot of questions, mainly about the interpretation on one side of properties known on the other side. As of now, we let the reader think about the following questions and remarks. 

\begin{enumerate}

\item Can we characterize "thick" and "thin" diagrams coming from thick and thin FC elements? How does the bijection in Lemma \ref{bijectionthick} translate in terms of diagrams? 

\smallskip

\item What does the duality of Proposition \ref{duality} look like in terms of diagrams?  

\smallskip 

\item Describe the FC elements corresponding to a diagram containing the arrow $(i, i')$, which behave as if they are made of two elements with disjoint supports, that commute. Can they be  fully characterized by conditions on their support,  in addition to the fact that it does not contain $\sigma_i$   ? 

\smallskip

\item Draw consequences of the equivalence relation on diagrams defined above, in particular relative to cellularity properties. 

\smallskip

\item Can we compute the number of equivalence classes, and the cardinality of each? For this would lead to  very interesting partitions of the Catalan number, as follows.  Observe first that once 
$D^+  \cup D^-$ is fixed, what varies in the corresponding equivalence class is nothing but small non-crossing diagrams with exactly the presentation given in ((61)) \cite{Sta15}, hence the cardinality of an equivalence class is a product of  ``small'' Catalan numbers.

 Now let us write $A^{c,p}_n$ as a disjoint union of equivalence classes for $\approx$, say 
$A^{c,p}_n=  \sqcup_{\phi \in \Phi_n^p} \widetilde{ D_\phi}$, where we have chosen a system of representatives $D_\phi, \phi \in \Phi_n^p$, 
for  the equivalence classes in $A^{c,p}_n$, with parameter set $\Phi_n^p$, and we denote by $\widetilde D$ the equivalence class of $D$. Accordingly we find   $$\C_n^p=\sum_{\phi \in \Phi_n^p} \   \sharp  \   \widetilde{ D_\phi} 
\quad \text{ and } \quad C_{n+1} = \sum_{p=0}^n \sum_{\phi \in \Phi_n^p} \   \sharp  \   \widetilde{ D_\phi}, $$
which are respectively a one-parameter partition of the Narayana number and   a two-parameters partition of the Catalan number $C_{n+1}$, in which each term is a product of  smaller Catalan numbers.  

 \smallskip  

\item Rethink the latter questions for the rougher equivalence given by 
$D_1 $ equivalent to $ D_2$ if $D_1^+=D_2^+$. For this equivalence the number of classes is easier to compute, but the cardinality of each class is more complicated than for the previous one.  
\end{enumerate} 


\bigskip 

 \section*{Appendix: Generating function}

We want to derive from recurrence relation (\ref{part1cardinalities}) an expression for the generating function of 
  $\C_n^p$,  $0 \le p \le n$, namely $E(x,y)= \sum_{n \ge 0, p\ge 0} \C_n^p x^p y^n.$
We apply the usual procedure, multiplying both sides of  ({part1cardinalities})  by $x^p y^n$ and summing on $n\ge 1$, $p\ge 
0$, and we get (given that the sum in $r$ is $0$ for $p=0$): 
$$ 
\sum_{n \ge 1, p\ge 0} \!  \C_n^p x^p y^n  \! = \!  \sum_{n \ge 1, p\ge 0} \!   \C_{n-1}^p  x^p y^n +  \sum_{n \ge 1, p\ge 0} \!  \C_{n-1}^{p-1}  x^p y^n   +  
\sum_{n \ge 1, p\ge 1} \! \left( \sum_{r=1}^p  \sum_{i=1}^{n-1}    \C_{n-i-1}^{r-1}     \   \C_{i-1}^{p-r} 
\right)   x^p y^n $$
$$\begin{aligned}   
\text{i.e. }  \ 
E(x,y) -1 &= \   (y+xy) E(x,y)  \quad + \quad x y   \sum_{n \ge 1, p\ge 1}  \left( \sum_{r=1}^p  \sum_{i=1}^{n-1} \C_{n-i-1}^{r-1}     \   \C_{i-1}^{p-r} 
\right)   x^{p-1} y^{n-1} 
\\     
 &= \   (y+xy) E(x,y)  \quad + \quad x y   \sum_{n \ge 0, p\ge 0}  \left( \sum_{r=1}^{p+1}  \sum_{i=1}^{n }  \C_{n-i}^{r-1 }     \   \C_{i-1}^{p+1-r} 
\right)   x^{p} y^{n} 
\\     
 &= \    (y+xy) E(x,y)  \quad + \quad x y   \sum_{n \ge 0, p\ge 0}  \left( \sum_{r=0}^{p}  \sum_{i=0}^{n-1}  \C_{n-i-1}^{r}     \   \C_{i}^{p-r} 
\right)   x^{p} y^{n}  . 
\end{aligned}
$$
Let us write the Cauchy product  for $E(x,y)^2$: 
 
$$\begin{aligned}
E(x,y). E(x,y) &= \left( \sum_{n \ge 0, p\ge 0} \C_n^p x^p y^n \right) \left( \sum_{n \ge 0, p\ge 0} \C_n^p x^p y^n\right)  \\  
&=  \sum_{n \ge 0, p\ge 0} \left( \sum_{ a+i=n, r+d=p} \C_a^r \C_i^d      \right) x^p y^n  
 \\  
&=  \sum_{n \ge 0, p\ge 0} \left(\sum_{   r=0}^p  \sum_{ i=0}^n   \C_{n-i}^r \C_{i}^{p-r}      \right) x^p y^n .
\end{aligned}
$$
We recognise the  coefficient of $x^p y^{n-1}$ as the one we had before and we get eventually: 
$$E(x,y)  -1  =     (y+xy) E(x,y) + xy^2 E(x,y)^2, \text{ best written as: }$$ 
  $$  xy^2 \  E(x,y)^2 + \  (xy+y-1)  \  E(x,y) + 1 =  0 . $$
The function $E(x,y)$ has only non-negative powers of $x$ and $y$ and has  $1$ as the coefficient of 
$x^0 y^0$, so we solve this as:
$$\begin{aligned}
E(x,y)&= \frac{1}{2x y^2} \left[  (1-y-xy) - \left(  (1-y-xy)^2 - 4 x y^2  \right)^{\frac{1}{2}}  \right] \\ 
&=  \frac{(1-y-xy)}{2x y^2} \left[  1 - \left(  1 - 4 x y^2 (1-y-xy)^{-2} \right)^{\frac{1}{2}}  \right]
\end{aligned}
$$
We recall from \cite[(2.5.10)]{Wilf} that 
$$ 
\frac{1}{2z} \left( 1 - \sqrt{1-4z}  \right) 
= \sum_{n \ge 0}    \frac{1}{n+1} \binom{2n}{n}  z^n 
$$
$$\begin{aligned}
\text{hence: } \qquad  \qquad  E(x,y) 
&=   (1-y-xy)^{-1} \  \sum_{t \ge 0} \frac{1}{t+1} \binom{2t}{t}  x^t y^{2t} (1-y-xy)^{-2t}   \\
&=  \sum_{t \ge 0}  \frac{1}{t+1} \binom{2t}{t}  x^t y^{2t} (1-y-xy)^{-2t-1}    
\end{aligned}
$$
Now from \cite[(2.5.7)]{Wilf} we have   
$$(1-y-xy)^{-2t-1} =   \sum_{k \ge 0} \binom{k+2t}{k}  (1+x)^k  y^k
$$
that we may plug in the previous expression whenever convenient. In particular, we can fix $n$ and develop 
the one-variable generating function $F_n(x) = \sum_{p \ge 0} \C_n^p x^p$, which is the coefficient of $y^n$ 
in the above series. We obtain: 
$$\begin{aligned}
F_n(x)&=   \sum_{\substack{
   t \ge 0 \\
 k \ge 0 \\
k+2t=n
  }}     \frac{1}{t+1} \binom{2t}{t}  x^t  \binom{k+2t}{k}  (1+x)^k  \\
&=   \sum_{\substack{
   t \ge 0 \\
 2t\le    n
  }}     \frac{1}{t+1} \binom{2t}{t}  x^t  \binom{n}{n-2t}  (1+x)^{n-2t} 
  \\
&=   \sum_{\substack{
   t \ge 0 \\
 2t\le    n
  }}  \sum_{i=0}^{n-2t}       \frac{1}{t+1} \binom{2t}{t}   \binom{n}{n-2t}  \binom{n-2t}{i} x^{t+i} 
 \\
&=   \sum_{\substack{
   t \ge 0 \\
 2t\le    n
  }}  \sum_{i=0}^{n-2t}       \frac{1}{t+1} \frac{(2t)! \ n! \ (n-2t)!}{t! \ t! \  (n-2t)! \ (2t)! \ i! \ (n-2t-i)!} \   x^{t+i} 
\\
&=     \sum_{\substack{
   t \ge 0 \\
 2t\le    n
  }}  \sum_{i=0}^{n-2t}       \frac{1}{t+1} \frac{  \ n!  }{t! \ t! \    \ i! \ (n-2t-i)!} \   x^{t+i} \end{aligned}
$$ 
We change variables as follows: 

 $(t,i) \mapsto (p,t) \ (t+i = p,  0 \le p \le n ; \   2t+i=p+t , 0 \le p+t \le n  )   $  

\noindent 
and get: 
$$\begin{aligned}
F_n(x)   &=     \sum_{p=0}^{n}      \left(     \sum_{t=0}^{p}     \frac{1}{t+1} \frac{  \ n!  }{t! \ t! \    \ (p-t)! \ (n-p-t)!} \right)   x^{p} 
\\
&=     \sum_{p=0}^{n}      \left(     \sum_{t=0}^{p}     \frac{1}{t+1} \frac{ p! \ (n-p)! \ n!  }{t! \ t! \  p! \ (n-p)!  \ (p-t)! \ (n-p-t)!} \right)   x^{p} 
\\
&=     \sum_{p=0}^{n}   \binom{n}{p}    \left(     \sum_{t=0}^{p}     \frac{1}{t+1}  \binom{p}{t} 
 \binom{n-p}{t} 
   \right)   x^{p} 
\end{aligned}
$$
 
The following lemma may be known yet we only found a simpler version in \cite{Sta12} so we include the proof. 
\begin{lemma}\label{binomiallemma}
$$
     \sum_{t=0}^{p}     \frac{1}{t+1}  \binom{p}{ t} 
 \binom{n-p}{t} = \frac{1}{p+1}  \binom{n+1}{n-p+1} = \frac{1}{p+1}  \binom{n+1}{p}  . 
$$
\end{lemma}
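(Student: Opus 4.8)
The plan is to reduce the identity to a single application of the Vandermonde convolution. The key algebraic manipulation is the absorption identity $\frac{1}{t+1}\binom{p}{t} = \frac{1}{p+1}\binom{p+1}{t+1}$, which follows immediately from $\binom{p+1}{t+1} = \frac{p+1}{t+1}\binom{p}{t}$. Applying this to every summand, I would rewrite the left-hand side as
$$
\sum_{t=0}^{p} \frac{1}{t+1} \binom{p}{t}\binom{n-p}{t} = \frac{1}{p+1}\sum_{t=0}^p \binom{p+1}{t+1}\binom{n-p}{t}.
$$

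Next I would perform the index shift $s = t+1$, turning the inner sum into $\sum_{s=1}^{p+1}\binom{p+1}{s}\binom{n-p}{s-1}$. Rewriting $\binom{n-p}{s-1} = \binom{n-p}{(n-p+1)-s}$ by the symmetry of binomial coefficients puts the expression into the exact shape of the Vandermonde identity $\sum_{s}\binom{a}{s}\binom{b}{m-s} = \binom{a+b}{m}$ with $a = p+1$, $b = n-p$, and $m = n-p+1$. This directly yields
$$
\frac{1}{p+1}\sum_{s}\binom{p+1}{s}\binom{n-p}{(n-p+1)-s} = \frac{1}{p+1}\binom{n+1}{n-p+1}.
$$

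Finally, applying the symmetry $\binom{n+1}{n-p+1} = \binom{n+1}{p}$ gives the two stated closed forms simultaneously. I do not expect any genuine obstacle here: each step is a standard binomial manipulation. The one point requiring a moment's care is the bookkeeping on the summation range when invoking Vandermonde, namely checking that the term $s=0$ (which the convolution formula includes) contributes nothing because $\binom{n-p}{n-p+1}=0$, and likewise that terms with $s>p+1$ vanish, so that extending the sum to the full Vandermonde range is harmless and the convolution applies verbatim.
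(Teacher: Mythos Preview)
Your proof is correct and complete; the bookkeeping on the Vandermonde range is handled properly.

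However, your route differs from the paper's. The paper argues via generating functions and integration: it observes that $\sum_{t} \frac{1}{t+1}\binom{p}{t}x^{t+1} = \int_0^x (1+u)^p\,du = \frac{1}{p+1}\bigl((1+x)^{p+1}-1\bigr)$, multiplies this by $(1+x)^{n-p}$, and reads off the coefficient of $x^{n-p+1}$ in $\frac{1}{p+1}\bigl((1+x)^{n+1}-(1+x)^{n-p}\bigr)$. Your argument is purely combinatorial---absorption plus Vandermonde---and avoids any analysis; it is shorter and arguably more elementary. The paper's approach, on the other hand, fits naturally into the surrounding appendix, which is already working with generating functions, and makes the appearance of the $\frac{1}{t+1}$ factor transparent as the effect of term-by-term integration. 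Both methods are standard; yours is the cleaner self-contained proof, while the paper's is thematically consistent with its context.
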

\begin{proof}
The simpler version  in \cite[Example 1.1.17]{Sta12} gives   the coefficient of $x^p$ in  $(1+x)^n$ written as 
  $(1+x)^p   (1+x)^{n-p}  $. We have 
 $\sum_{t=0}^{n-p}    
 \binom{n-p}{t} x^t =  (1+x)^{n-p}$ and by integrating term by term $ (1+x)^{ p} $   we also have 
$$  \sum_{t=0}^{p}     \frac{1}{t+1}  \binom{p}{t} x^{t+1}  =    \int_0^x (1+u)^p   du    = 
\frac{1}{p+1} \left[ (1+x)^{p+1} - 1 \right]  .  
$$    
 Hence 
$   \sum_{t=0}^{p}     \frac{1}{t+1}  \binom{p}{ t} 
 \binom{n-p}{t} =  \sum_{t=0}^{p}     \frac{1}{t+1}  \binom{p}{  t} 
 \binom{n-p}{n-p-t} 
$ is the coefficient of $x^{n-p+1} $ in 
$$ \frac{1}{p+1} \left[ (1+x)^{p+1} - 1 \right] (1+x)^{n-p} =\frac{1}{p+1} \left[ (1+x)^{n+1} - (1+x)^{n-p} \right]  
$$ 
namely  $ \frac{1}{p+1}  \binom{n+1}{n-p+1} $. 
\end{proof}  
 From this we get Theorem \ref{Narayana} as announced.

\bigskip
 
\renewcommand{\refname}{REFERENCES}

	\end{document}